\newtheorem{prop}{Proposition}
\author{Iveta~Hn\v{e}tynkov\'{a}%
  \thanks{Faculty of Mathematics and Physics, Charles University, Prague, Czech Republic. Electronic address: \texttt{hnetynko@karlin.mff.cuni.cz}}
  \and Marie~Kub\'{i}nov\'{a}%
  \thanks{Faculty of Mathematics and Physics, Charles University, Prague, Czech Republic, and Institute of~Computer Science, The Czech Academy of~Sciences, Prague, Czech~Republic. Electronic address: \texttt{kubinova@karlin.mff.cuni.cz} \Letter}
\and Martin~Ple\v{s}inger%
\thanks{Faculty of Education, Technical University of Liberec, Liberec, Czech~Republic. Electronic address: \texttt{martin.plesinger@tul.cz} }}
\date{}
\title{Noise representation in residuals of LSQR, LSMR, and CRAIG regularization}
\begin{document}
\maketitle

\begin{abstract}
Golub-Kahan iterative bidiagonalization represents the core algorithm in several regularization methods for solving large linear noise-polluted ill-posed problems. We consider a general noise setting and derive explicit relations between (noise contaminated) bidiagonalization vectors and the residuals of bidiagonalization-based regularization methods LSQR, LSMR, and CRAIG. For LSQR and LSMR residuals we prove that the coefficients of the linear
combination of the computed bidiagonalization vectors reflect the amount of propagated noise in each of these vectors. For CRAIG the residual is only a multiple of a particular bidiagonalization vector. We show how its size indicates the regularization effect in each iteration by expressing the CRAIG solution as the exact solution to a modified compatible problem. Validity of the results for larger two-dimensional problems and influence of the loss of orthogonality is also discussed.
\end{abstract}

\paragraph*{Keywords}
ill-posed problems, regularization, Golub-Kahan iterative bidiagonalization, LSQR, LSMR, CRAIG

\paragraph*{AMS classification} 15A29, 65F10, 65F22

\section{Introduction}\label{sec:introduction}

In this paper we consider ill-posed linear algebraic problems of the form
\begin{equation}
  b =Ax + \eta,\qquad A\in\mathbb{R}^{m\times n},
  \qquad b\in\mathbb{R}^{m},\qquad
  {\|\eta\|\ll\|Ax\|,}
  \label{eq:theproblem}
\end{equation}
where the matrix $A$ represents a~discretized smoothing operator with the singular values decaying gradually to zero without a~noticeable gap. We assume that multiplication of a~vector $v$ by $A$ or $A^{T}$ results in smoothing which reduces the relative size of the high-frequency components of $v$. The operator $A$ and the vector $b$ are supposed to be known. The vector $\eta$ represents errors, such as \emph{noise}, that affect the exact data. Problems of this kind are commonly referred to as linear discrete ill-posed problems or linear inverse problems and arise in a~variety of applications \cite{Hansen2010Discrete,Hansen1998Rank}. Since $A$ is ill-conditioned, the presence of noise makes the naive solution
\begin{equation}
x^{\text{naive}}\equiv A^{\dagger}b,
\end{equation}
where $A^\dagger$ denotes the Moore-Penrose pseudoinverse, meaningless. Therefore, to find an acceptable numerical approximation to $x$, it is necessary to use regularization methods.

Various techniques to regularize the linear inverse problem \eqref{eq:theproblem} have been developed. For large-scale problems, iterative regularization is a~good alternative to
direct regularization methods. When an iterative method is used, regularization is achieved by early termination of the process, before noise $\eta$
starts to dominate the approximate solution \cite{Hansen2010Discrete}. Many iterative regularization methods such as LSQR \cite{Paige1982LSQR,Bjoerck1988bidiagonalization,Saunders1997Computing,Jensen2007Iterative},  CRA\-IG \cite{Craig1955N,Saunders1995Solution}, LSMR \cite{Fong2011LSMR}, and CRAIG-MR/MRNE \cite{Arioli2013Iterative,Morikuni2013Inner} involve the Golub-Kahan iterative bidiagonalization \cite{Golub1965Calculating}. Combination with an additional inner regularization (typically with a~spectral filtering method) gives so-called hybrid regularization; see, for example, \cite{Bjoerck1988bidiagonalization,Hanke2001Lanczos,Kilmer2001Choosing,Chung2015hybrid}.
Various approaches for choosing the stopping criterion, playing here the role of the regularization parameter, are based on comparing the properties of the actual residual to an a priori known property of noise, such as the noise level in the Morozov's discrepancy principle \cite{Morozov1966solution}, or the noise distribution in the cumulative residual periodogram method \cite{Rust2000Parameter,Rust2008Residual,Hansen2006Exploiting}. Thus understanding how noise translates to the residuals during the iterative process is of great interest.

The aim of this paper is, using the analysis of the propagation of noise in the left bidiagonalization vectors provided in \cite{Hnetynkova2009regularizing}, to study the relation between residuals of bidiagonali\-za\-tion-based methods and the noise vector $\eta$. Whereas in \cite{Hnetynkova2009regularizing}, white noise was assumed, here we have no particular assumptions on the distribution of noise. 
We only assume the amount of noise is large enough to make the noise propagation visible through the smoothing by $A$ in construction
of the  bidiagonalization vectors. This is often the case in ill-posed problems, as we illustrate on one-dimensional (1D) as well as significantly noise contaminated two-dimensional (2D) benchmarks. 
We prove that LSQR and LSMR residuals are given by a linear
combination of the bidiagonalization vectors with the coefficients 
related to the amount of propagated noise in the corresponding vector.
For CRAIG, the residual is only a multiple of a particular bidiagonalization vector.
This allows us to prove that an approximate solution obtained  in a given iteration by
CRAIG applied to \eqref{eq:theproblem} coincides with an exact solution of
the (compatible) modified problem
\begin{equation}
Ax = b-\tilde{\eta}\label{eq:transformed},
\end{equation}
where $\tilde{\eta}$ is a noise vector estimate constructed from the currently computed bidiagonalization vectors.
These results contribute to understanding of regularization properties of the considered methods and should be considered when devising reliable 
stopping criteria.

Note that since LSQR is mathematically equivalent to CGLS and CGNR, CRAIG is mathematically equivalent to CGNE and CGME \cite{Saad2003Iterative}, and LSMR is mathematically equivalent to CRLS \cite{Fong2011LSMR}, then in exact arithmetic, the analysis applies also to these methods.

The paper is organized as follows. In Section~\ref{sec:preliminaries}, after a recollection of the previous results, we study the propagation of various types of noise and the influence of the loss of orthogonality on this phenomenon. Section~\ref{sec:residuals_noise} investigates the residuals of selected methods with respect to the noise contamination in the left bidiagonalization vectors and compares their properties. Section~\ref{sec:2Dproblems} discusses validity of obtained results for larger 2D problems.
Section~\ref{sec:conclusions} concludes the paper.

Unless specified otherwise, we assume exact arithmetic and the presented experiments are performed with full double reorthogonalization in the bidiagonalization process.
Throughout the paper, $\|v\|$ denotes the standard Euclidean norm of the vector $v$, vector $e_k$ denotes the $k$-th column of the identity matrix. By $\mathcal{P}_k$, we denote the set of polynomials of degree less or equal to $k$. The noise level is denoted by
$\delta_\text{noise}\equiv \|\eta\|/\|Ax\|$. By Poisson noise, we understand $b_i\sim \text{Pois}([Ax]_i)$, i.e., the right-hand side $b$ is a Poisson random with the Poisson parameter $Ax$. The test problems were adopted from the Regularization tools \cite{Hansen2007Regularization}. For simplicity of exposition, we assume the initial approximation $x_0\equiv 0$ throughout the paper. Generalization to $x_0\neq0$ is straightforward.

\section{Properties of the Golub-Kahan iterative bidiagonalization} \label{sec:preliminaries}

\subsection{Basic relations}
Given the initial vectors
$w_{0}\equiv0$, $s_{1}\equiv b/\beta_{1}$, where $\beta_{1}\equiv\|b\|$,
the Golub-Kahan iterative bidiagonalization \cite{Golub1965Calculating} computes, for $k = 1,2,\ldots$,
\begin{align}
\alpha_{k}w_{k} & =A^{T}s_{k}-\beta_{k}w_{k-1}\,, & \|w_{k}\| & =1,\\
\beta_{k+1}s_{k+1} & =Aw_{k}-\alpha_{k}s_{k}\,, & \|s_{k+1}\| & =1,\label{eq:bid_alg}
\end{align}
until $\alpha_{k}=0$ or $\beta_{k+1}=0$, or until $k=\min(m,n)$.  Vectors $s_1,\ldots,s_k$, and $w_1,\ldots,w_k$, form orthonormal bases of the Krylov subspaces $\mathcal{K}_k(AA^{T},b)$ and $\mathcal{K}_k(A^{T}A,A^{T}b)$, respectively. In the rest of the paper, we assume that the bidiagonalization process does not terminate before the iteration $k + 1$, i.e., $\alpha_l,\beta_{l+1} >0$, $l = 1,\ldots k.$

Denoting $S_k\equiv[s_{1},\ldots,s_k]\in\mathbb{R}^{m\times k}, \, W_k\equiv[w_{1},\ldots,w_k]\in\mathbb{\mathbb{R}}^{n\times k}$  and
\begin{equation}
  L_k\equiv\left[\begin{array}{cccc}
    \alpha_1\\\beta_2&\alpha_2\\&\ddots&\ddots\\&&\beta_k&\alpha_k
  \end{array}\right]\in\mathbb{R}^{k\times k},
  \qquad
  L_{k+}\equiv\left[\begin{array}{c}L_k\\e_k^T\beta_{k+1}\end{array}\right]\in\mathbb{R}^{(k+1)\times k},
  \label{eq:Lk}
\end{equation}
we can write the matrix version of the bidiagonalization as
\begin{equation}
  A^TS_k = W_kL_k^T,
  \qquad
  AW_k = S_{k+1}L_{k+}. \label{eq:GKmat}
\end{equation}
The two corresponding Lanczos three-term recurrences
\begin{equation}
  (AA^T)S_k = S_{k+1}(L_{k+}L_k^T),
  \qquad
  (A^TA)W_k =  W_{k+1}(L_{k+1}^TL_{k+}),\label{eq:lanczos}
\end{equation}
allow us to describe the bidiagonalization vectors $s_{k+1}$ and $w_{k+1}$  in terms of the Lanczos polynomials as
\begin{equation}
s_{k+1} = \varphi_k(AA^T)b, \qquad w_{k+1} = \psi_k(A^TA)A^Tb \qquad \varphi_k,\psi_k \in \mathcal{P}_k; \label{eq:lanczos_poly}
\end{equation}
see \cite{Paige1982LSQR,Bjoerck1988bidiagonalization,Meurant2006Lanczos,Meurant2006Lanczosa,Golub2009Matrices}.
From \eqref{eq:lanczos_poly} we have that
\begin{equation}
s_{k+1} = \varphi_k(AA^T)b = \varphi_k(AA^T)(Ax+\eta), \label{eq:poly_form}
\end{equation}
giving
\begin{equation}
s_{k+1} = \left[\varphi_k(AA^T)Ax + (\varphi_k(AA^T) - \varphi_k(0))\eta\right] + \varphi_k(0)\eta.  \label{eq:poly_split}
\end{equation}	
The first component on the right-hand side of \eqref{eq:poly_split} can be rewritten as
\begin{equation}
s_{k+1}^\text{LF} \equiv  \left[\varphi_k(AA^T)Ax + (\varphi_k(AA^T) - \varphi_k(0)) \eta\right]
= A q_{k-1}(AA^T)\left[x+A^T \eta\right] ,
\end{equation}
for some $q_{k-1} \in \mathcal{P}_k$. Since $A$ has the smoothing property, then $s_{k+1}^\text{LF}$ is smooth for $k\ll\min(m,n)$. Thus $s_{k+1}$ is a~sum of a~low-frequency vector and the scaled noise vector $\eta$,
\begin{equation}
s_{k+1} = s_{k+1}^\text{LF} + \varphi_k(0)\eta.  \label{eq:poly_split_simple}
\end{equation}
Note that this splitting corresponds to the low-frequency part and propagated 
(non-smoothed) noise part only when $\|s_{k+1}^\text{LF}\|^{2}+\|\varphi_k(0)\eta\|^{2}\approx 1$. For large $k$s, there is a~considerable cancellation between $s_{k+1}^\text{LF}$ and $\varphi_k(0)\eta$, the splitting \eqref{eq:poly_split_simple} still holds but it does not correspond to our intuition of an underlying smooth vector and some added scaled noise. Thus we restrict ourselves to smaller values of $k$.

It has been shown in \cite{Hnetynkova2009regularizing} that whereas for $s_1$ (the scaled right-hand side) the noise part in \eqref{eq:poly_split_simple} is small compared to the true data, for larger $k$, due to the smoothing property of the matrix $A$ and the orthogonality between the vectors $s_k$, the noise part becomes more significant.
The noise scaling factor determining the relative amplification of the non-smoothed part of noise corresponds to the constant term of the Lanczos polynomial
\begin{equation}
\varphi_k(0) = (-1)^{k}\frac{1}{\beta_{k+1}}\prod_{j=1}^{k}\frac{\alpha_j}{\beta_j}
\label{eq:ampl_factor}
\end{equation}
called the \emph{amplification factor}.\footnote{Note that  in \cite{Hnetynkova2009regularizing} a different notation was used. The Lanczos polynomial $\varphi_k$ was scaled by $\|b\|$ so that $s_{k+1} = \tilde\varphi_k(AA^T)s_{1}$. The vector $s_{k+1}$ was split into $s_{k+1} = s_{k+1}^\text{exact}+s_{k+1}^\text{noise}$. In our notation,  $s_{k+1}^\text{exact}=s_{k+1}^\text{LF}$, and $s_{k+1}^\text{noise}=\varphi_k(0)\eta$.}
Its behavior for problems with white noise was studied in \cite{Hnetynkova2009regularizing} and the analysis concludes that its size increases with $k$ until the \emph{noise revealing iteration} $k_{\text{rev}}$, where the vector $s_{k+1}$ is dominated by the non-smoothed part of noise. Then the amplification factor increases at least for one iteration. 
The noise revealing iteration $k_{\text{rev}}$ can be defined as 
Note that there is no analogy for the right bidiagonalization vectors, since all vectors $w_k$ are smoothed and the factor $\psi_k(0)$ on average grows till late iterations. A recursive relation for $\psi_k(0)$, obtained directly from \eqref{eq:bid_alg} has the form
\begin{align}
\psi_0(0) &= \frac{1}{\alpha_1\beta_1},\\
\psi_k(0) &= \frac{1}{\alpha_{k+1}}(\varphi_k(0) - \beta_{k+1}\psi_{k-1}(0)), \quad k = 1,2,\ldots. \label{eq:abspsi}
\end{align}

\subsection{Behavior of the noise amplification factor}\label{sec:behavior}
\paragraph{Influence of the noise frequency characteristics} The phenomenon of noise amplification is demonstrated on the problems from \cite{Hansen1994Regularization, Hansen2007Regularization}. Figures~ \ref{fig:phi0} and \ref{fig:psi0} show the absolute terms of the Lanczos polynomials $\varphi_k$ and $\psi_k$ for the problem \texttt{shaw} polluted with white noise of various noise levels. 
For example, for the noise level $10^{-3}$, the maximum of $\varphi_k(0)$ is achieved for $k = 6$, which corresponds to the observation that the vector $s_{7}$ in Figure~\ref{fig:S_k} is the  most dominated by propagated noise. Obviously, the noise revealing iteration increases with decreasing noise level. The amplification factors exhibit similar behavior before the first decrease. 
However, the behavior of $\varphi_k(0)$ can be more complicated. In Figure~\ref{fig:phillips_multi} for \texttt{phillips}, the sizes of the amplification factors oscillate as a consequence of the oscillations in the sizes of the spectral components of $b$ in the left singular subspaces of $A$. Thus there is a partial reduction of the noise component, which influences the subsequent iterations, even before the noise revealing iteration. 
\begin{figure}
        \centering
        \begin{subfigure}[b]{0.95\textwidth}
                \includegraphics[width=\textwidth]{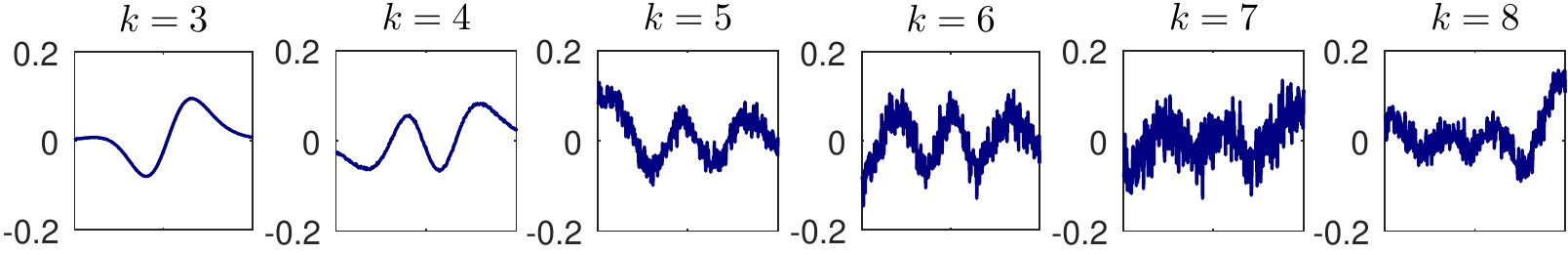}
                \caption{left bidiagonalization vectors $s_{k+1}$}
                \label{fig:S_k}
        \end{subfigure}

        \vspace*{.3cm}

        \begin{subfigure}[b]{0.45\textwidth}
                \includegraphics[height = 4cm]{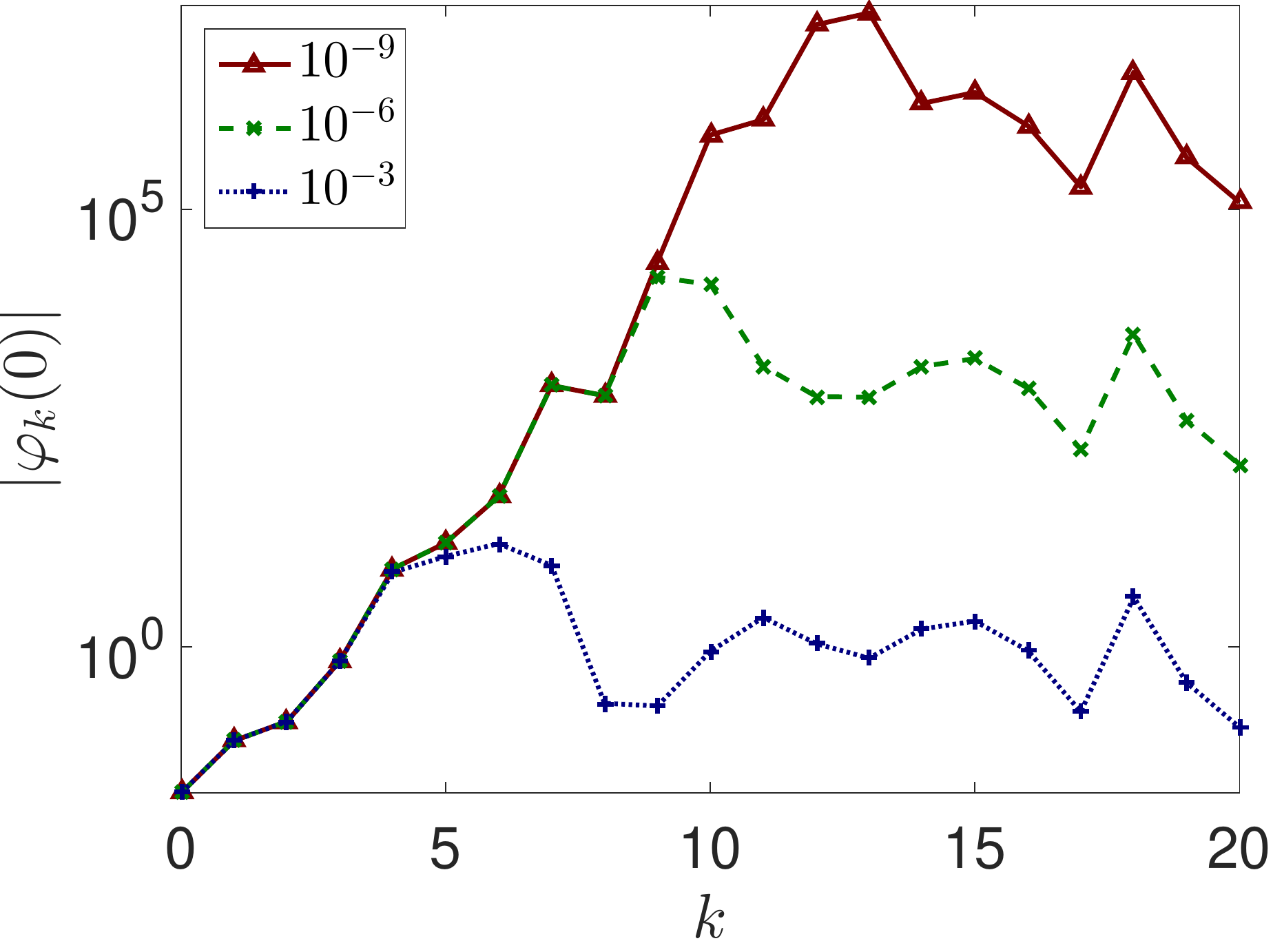}
                \caption{the size of $\varphi_{k}(0)$}
                \label{fig:phi0}
        \end{subfigure}
        \begin{subfigure}[b]{0.45\textwidth}
                \includegraphics[height = 4cm]{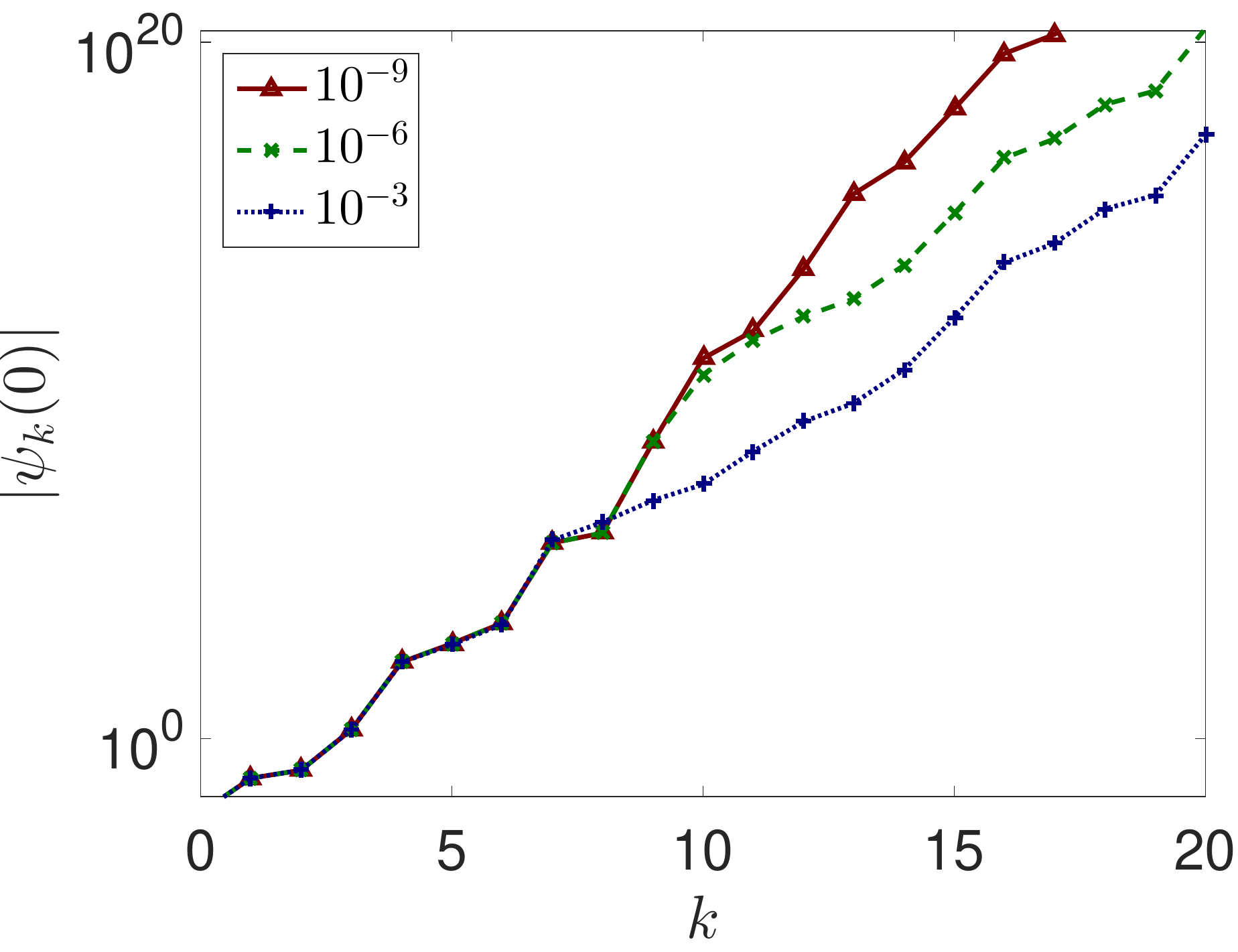}
                \caption{the size of $\psi_{k}(0)$}
                \label{fig:psi0}
        \end{subfigure}
        \caption{The problem \texttt{shaw(400)} polluted by white noise: (a) the left bidiagonalization vectors $s_{k+1}$ for the noise level $10^{-3}$;
        (b) the size of the absolute term of the Lanczos polynomial $\varphi_k$ for various noise levels;
        (c) the size of the absolute term of the Lanczos polynomial $\psi_k$ for various noise levels.}\label{fig:S_k_separate}
\end{figure}

\begin{figure}
        \centering     
        \begin{subfigure}[b]{0.32\textwidth}
                \includegraphics[width=\textwidth]{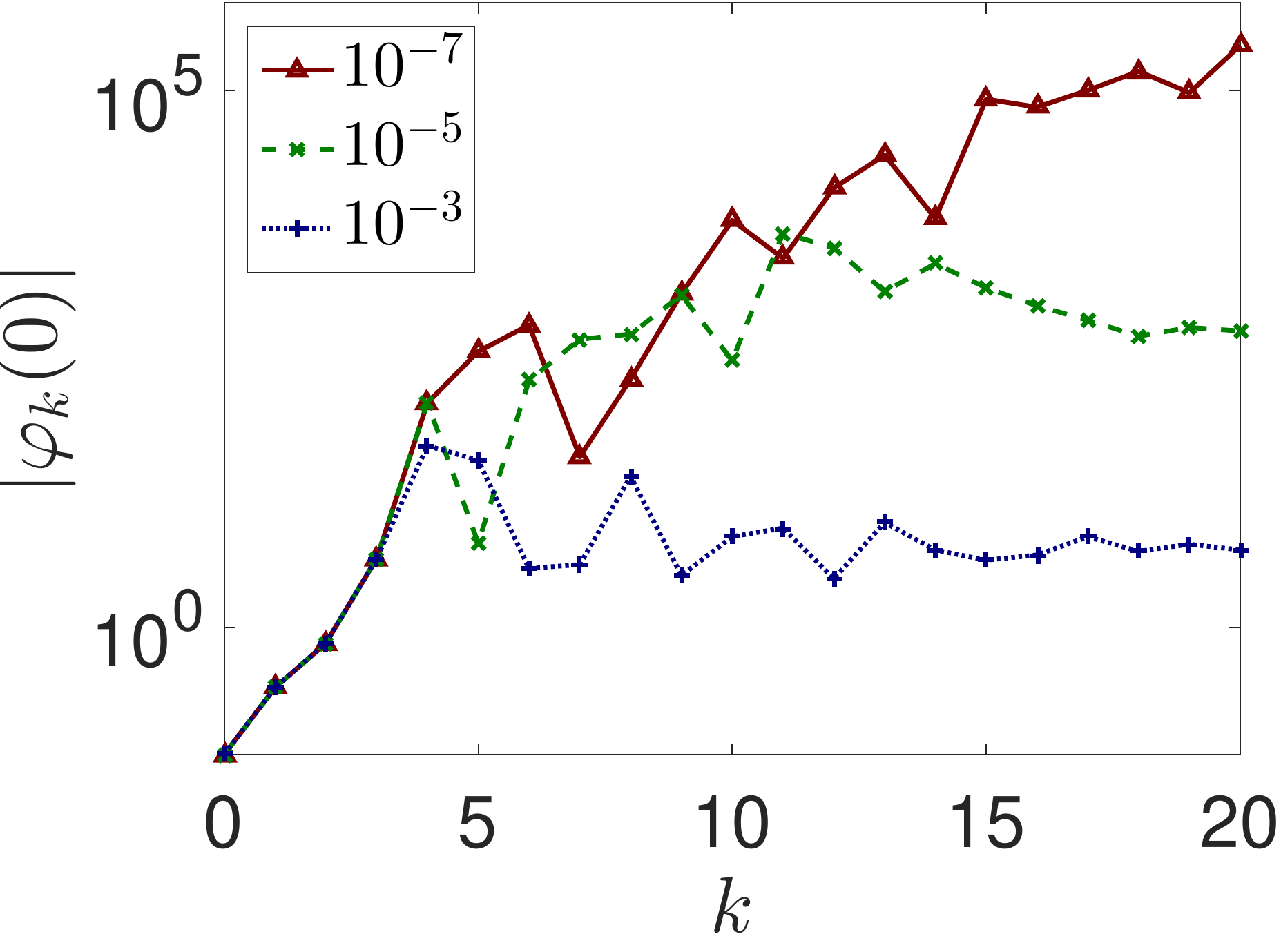}
                \caption{\texttt{phillips(400)}}\label{fig:phillips_multi}
        \end{subfigure}
        \begin{subfigure}[b]{0.32\textwidth}
                \includegraphics[width=\textwidth]{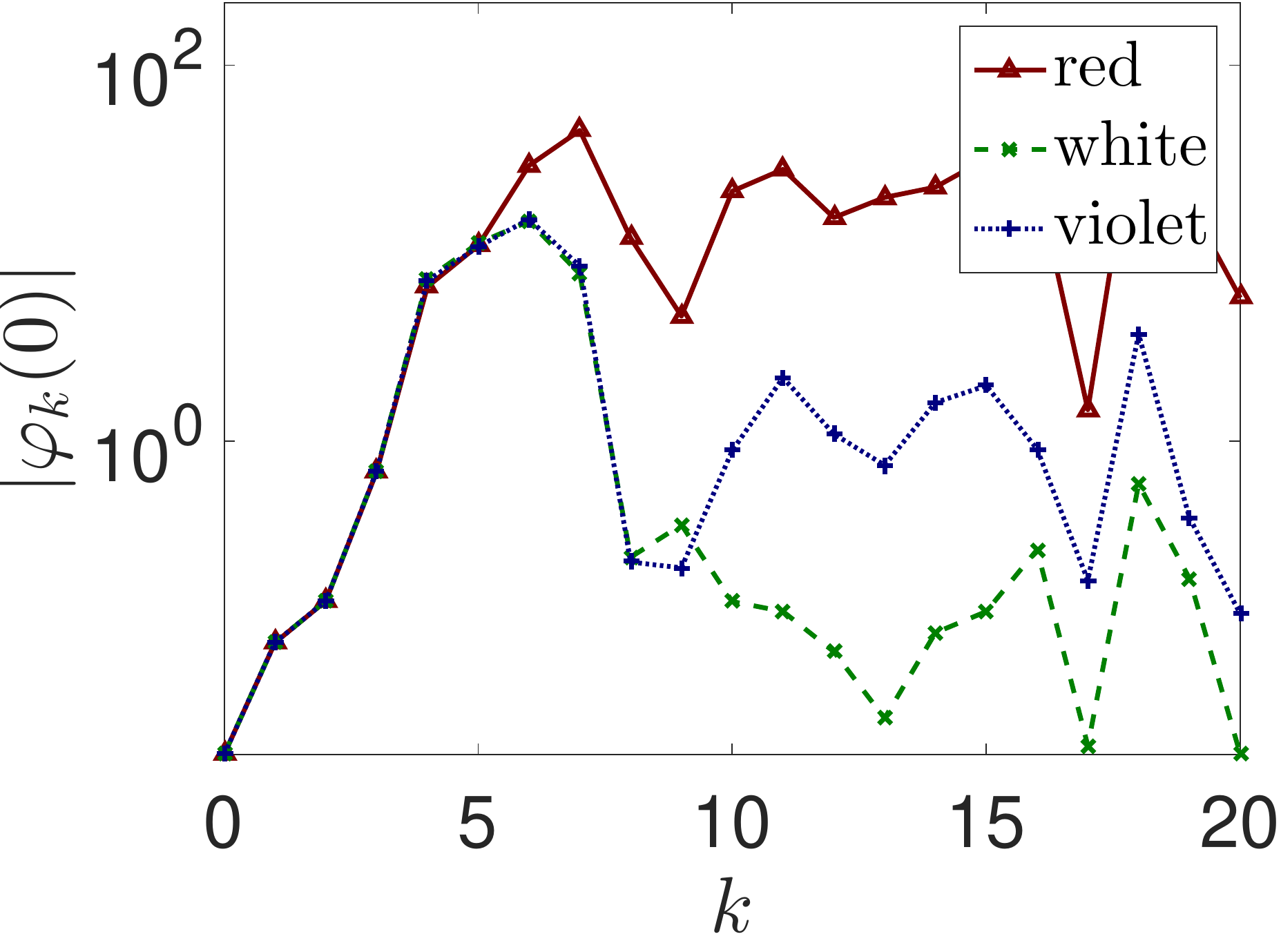}
                \caption{\texttt{shaw(400)}}\label{fig:shaw_multi_color}
        \end{subfigure}
        \begin{subfigure}[b]{0.32\textwidth}
                \includegraphics[width=\textwidth]{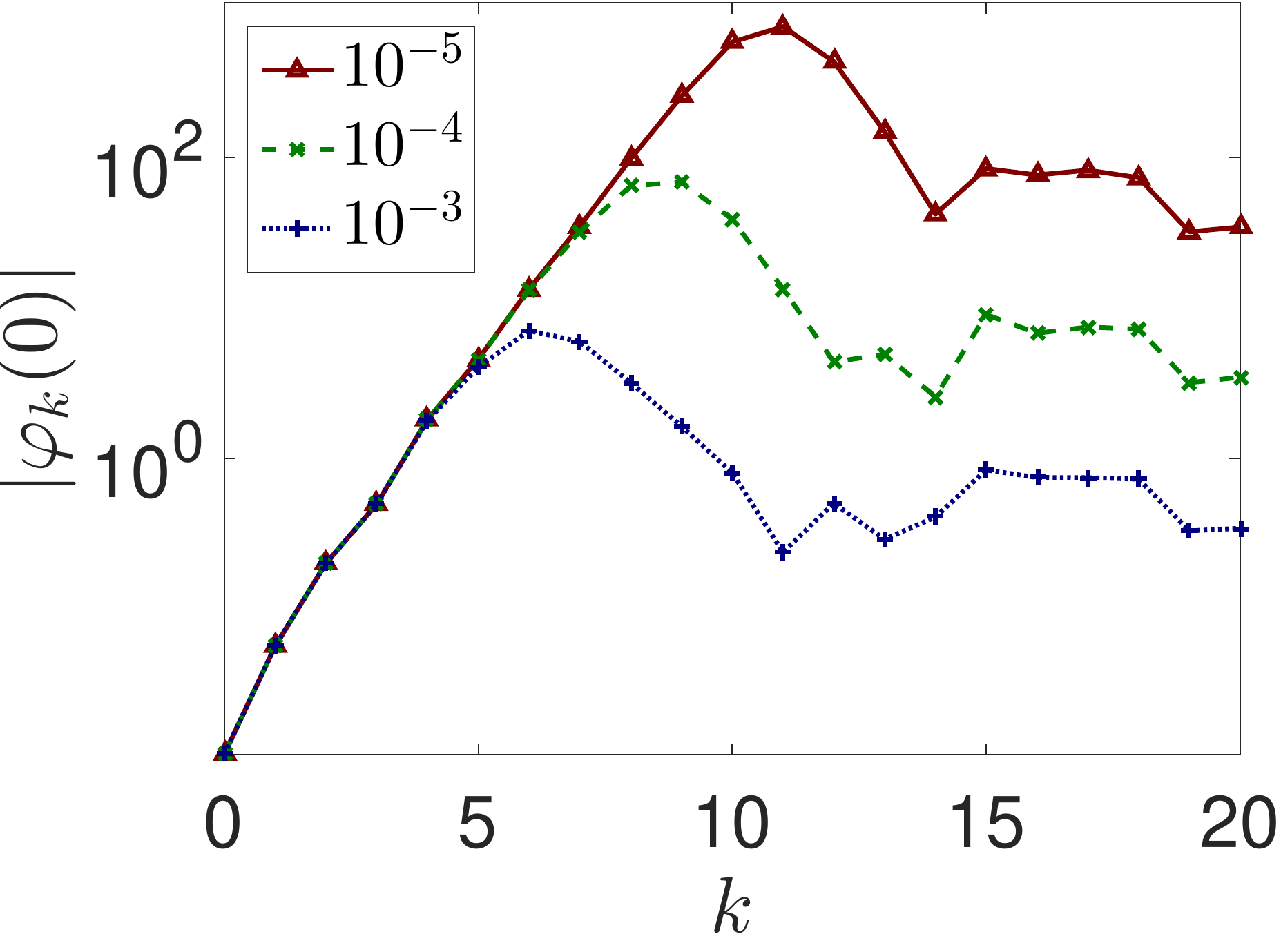}
                \caption{\texttt{gravity(400)}}\label{fig:gravity_multi_poisson}
        \end{subfigure}
        \caption{Influence of the amount of noise and its frequency characteristics on the amplification factor \eqref{eq:ampl_factor}: (a) the problem \texttt{phillips} with various noise levels of white noise;  (b) the problem \texttt{shaw} with noise of different frequency characteristics; (c) the problem \texttt{gravity} with Poisson noise with different noise levels achieved by scaling.}\label{fig:gamma_shaw_all}
\end{figure}

Even though \cite{Hnetynkova2009regularizing} assumed white noise, noise amplification can be observed also for other noise settings and the formulas \eqref{eq:lanczos_poly}-\eqref{eq:abspsi} still hold. However, for high-frequency noise, there is smaller cancellation between the low-frequency component $s_{k+1}^\text{LF}$ and the noise part $\varphi_k(0)\eta$ in \eqref{eq:poly_split_simple}. Therefore, in the orthogonalization steps succeeding the noise revealing iteration $k_\text{rev}$, the noise part is projected out more significantly. For low-frequency noise, on the other side, this smoothing is less significant, which results in smaller drop of \eqref{eq:ampl_factor} after $k_\text{rev}$.
This is illustrated in Figure~\ref{fig:shaw_multi_color} on the problem \texttt{shaw}
polluted by red (low-frequency), white, and violet (high-frequency) noise of the same noise level. For spectral characteristics of these types of noise see Figure~\ref{fig:power_spectra}.
\begin{figure}
        \centering     
        \begin{subfigure}[b]{0.31\textwidth}
                \includegraphics[width=\textwidth]{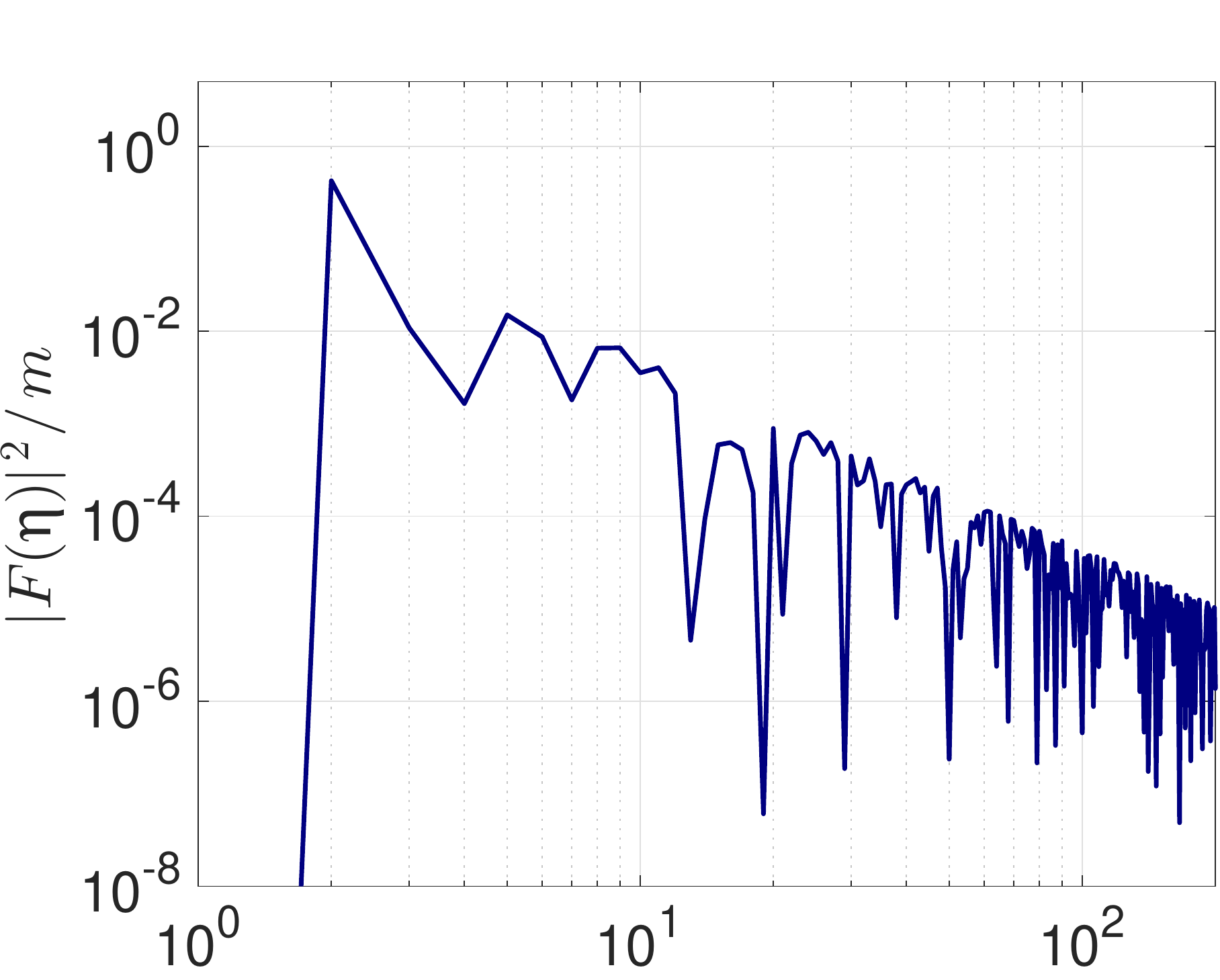}
                \caption{red noise}
        \end{subfigure}
        \hspace*{.2cm}
        \begin{subfigure}[b]{0.31\textwidth}
                \includegraphics[width=\textwidth]{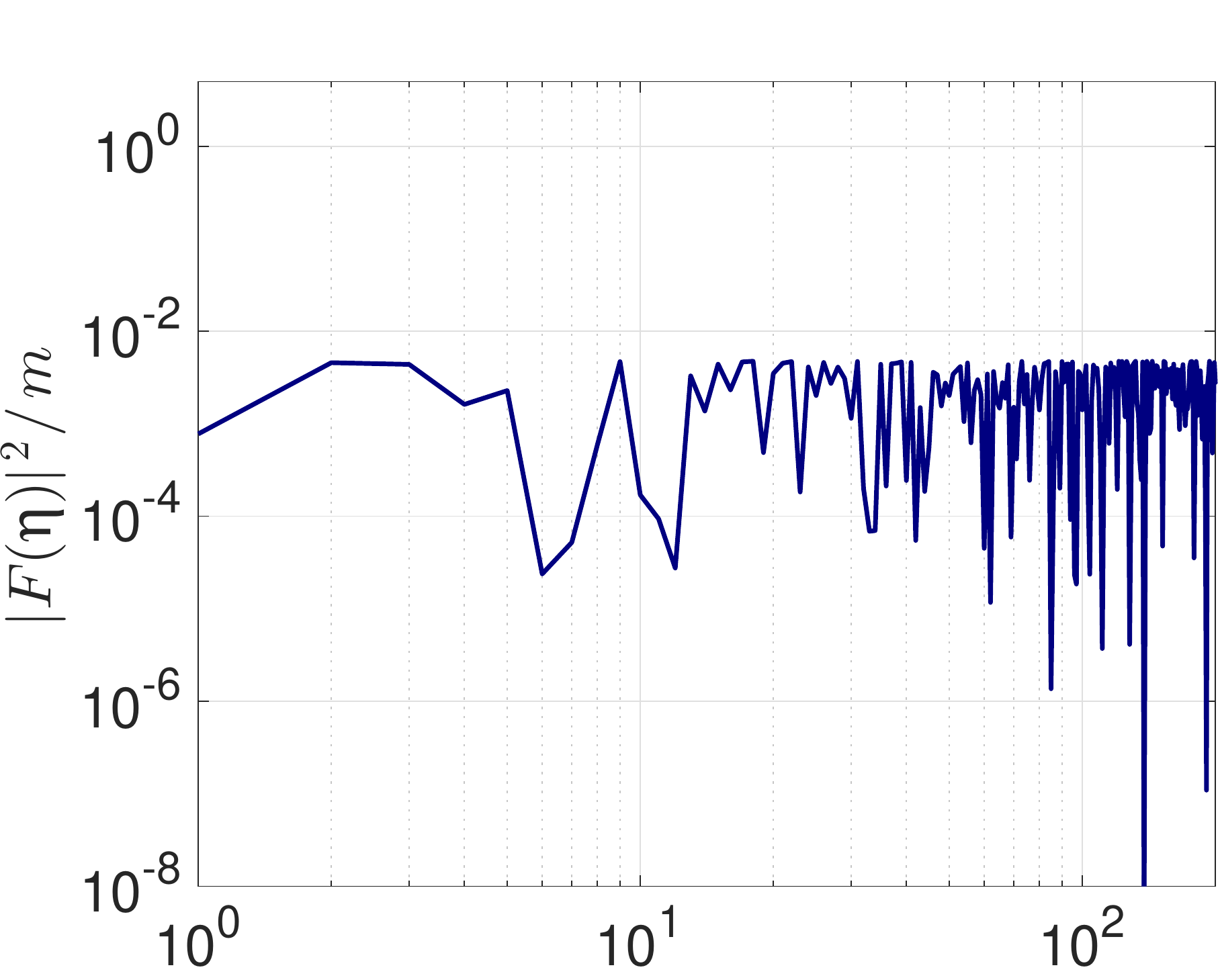}
                \caption{white noise}
        \end{subfigure}
        \hspace*{.2cm}
        \begin{subfigure}[b]{0.31\textwidth}
                \includegraphics[width=\textwidth]{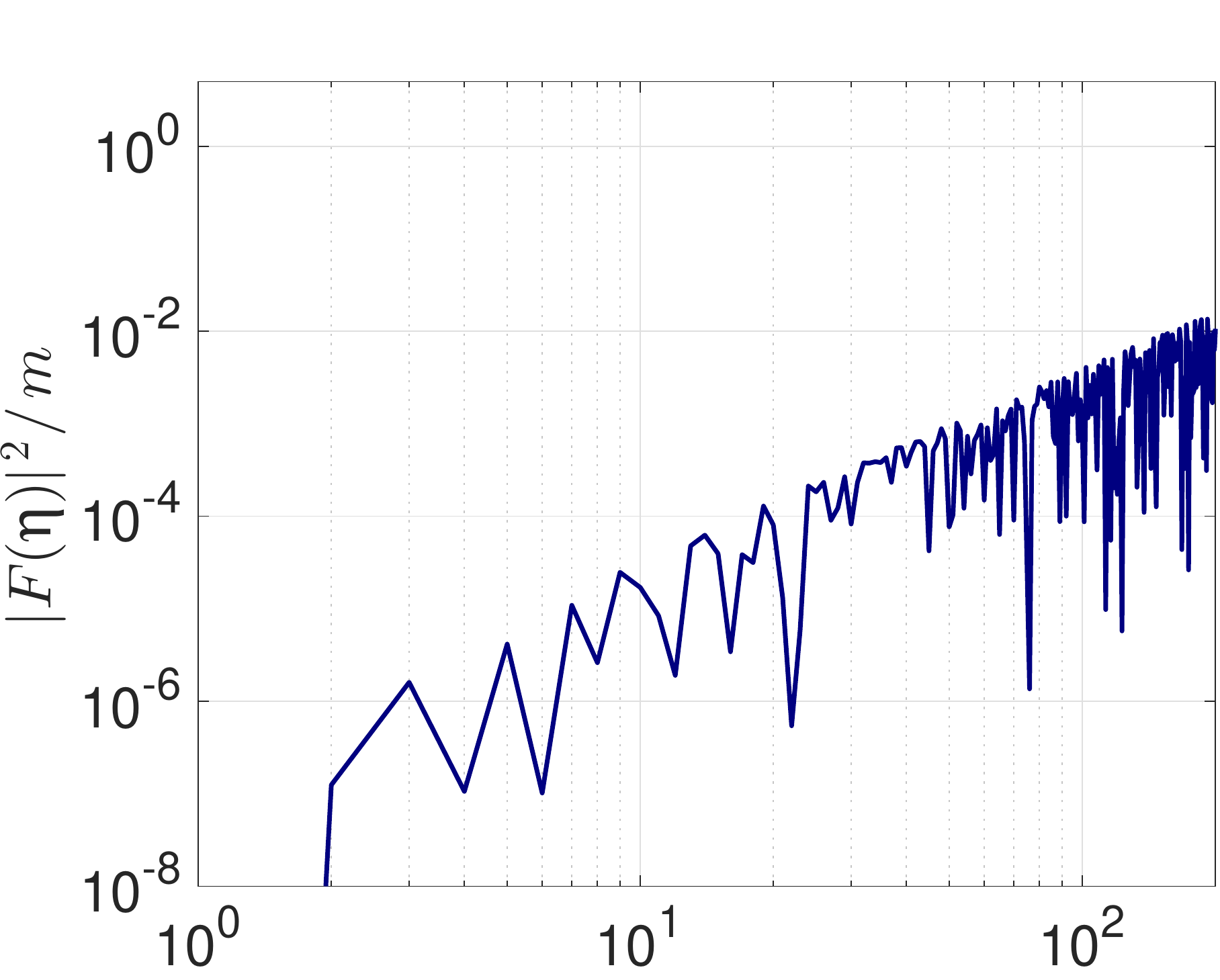}
                \caption{violet noise}
        \end{subfigure}
        \caption{Power spectral densities (or simply power spectra) for red (low-frequency dominated),
        white (or Gaussian), and violet (high-frequency dominated) noise $\eta$, $\|\eta\|=1$. Power spectrum is given by squared
        magnitudes of Fourier coefficients $F(\eta)$ of $\eta$ (see, e.g., \cite[chap. 2.7]{Brown1997Introduction}),
        here computed by the discrete Fourier transform. Power spectra are normalized by the length
        of the vector.}\label{fig:power_spectra}
\end{figure}
Figure~\ref{fig:gravity_multi_poisson} shows the amplification factor for various levels of Poisson noise.

\paragraph{Influence of the loss of orthogonality}
First note that the splitting \eqref{eq:poly_split_simple} remains valid even if $\varphi_k$ are not exactly orthonormal Lanczos polynomials, since the propagated noise can be still tracked using the absolute term of the corresponding (computed) polynomial. Nevertheless, it is clear that the loss of orthogonality among the left bidiagonalization vectors in finite precision arithmetic influences the behavior of the amplification factor $\varphi_k$, i.e. the propagation of noise. In the following, we denote all quantities computed without reorthogonalization by hat. 
Loss of orthogonality can be detected, e.g., by tracking the size of the smallest singular value $\bar{\sigma}_\text{min}$ of the matrix $\hat{S}_k$ of the computed left bidiagonalization vectors. In Figure~\ref{fig:gamma_shift} (left) for the problem \texttt{shaw} and \texttt{gravity} we see that when $\bar{\sigma}_\text{min}$ drops below one detecting the loss of orthogonality among its columns, the size of the amplification factor $\hat{\varphi}_k(0)$ starts to oscillate.
However, except of the delay, the larger values of $|\hat{\varphi}_k(0)|$ still match those of $|{\varphi}_k(0)|$. If we  plot $|\hat\varphi_k(0)|$ against the rank of $\hat{S}_k$ instead of $k$, the sizes of the two amplification factors become very similar. In our experiments, the rank of $\hat{S}_k$ was computed as \texttt{rank(S(:,1:k),1e-1)} in MATLAB, i.e., singular values of $\hat{S}_k$ at least ten times smaller than they would be for orthonormal columns were considered zero. A similar shifting strategy was proposed in  \cite[chap. 3]{Gergelits2013Analysis} for the convergence curves of the conjugate gradient method. Note that the choice of the tolerance can be problem dependent. Further study of this phenomenon is beyond the scope of this paper, but we can conclude that except of the delay the noise revealing phenomenon is in finite precision computations present.  
\begin{figure}
\centering
        \begin{subfigure}[b]{0.49\textwidth}
        		\centering
                \includegraphics[height=4cm]{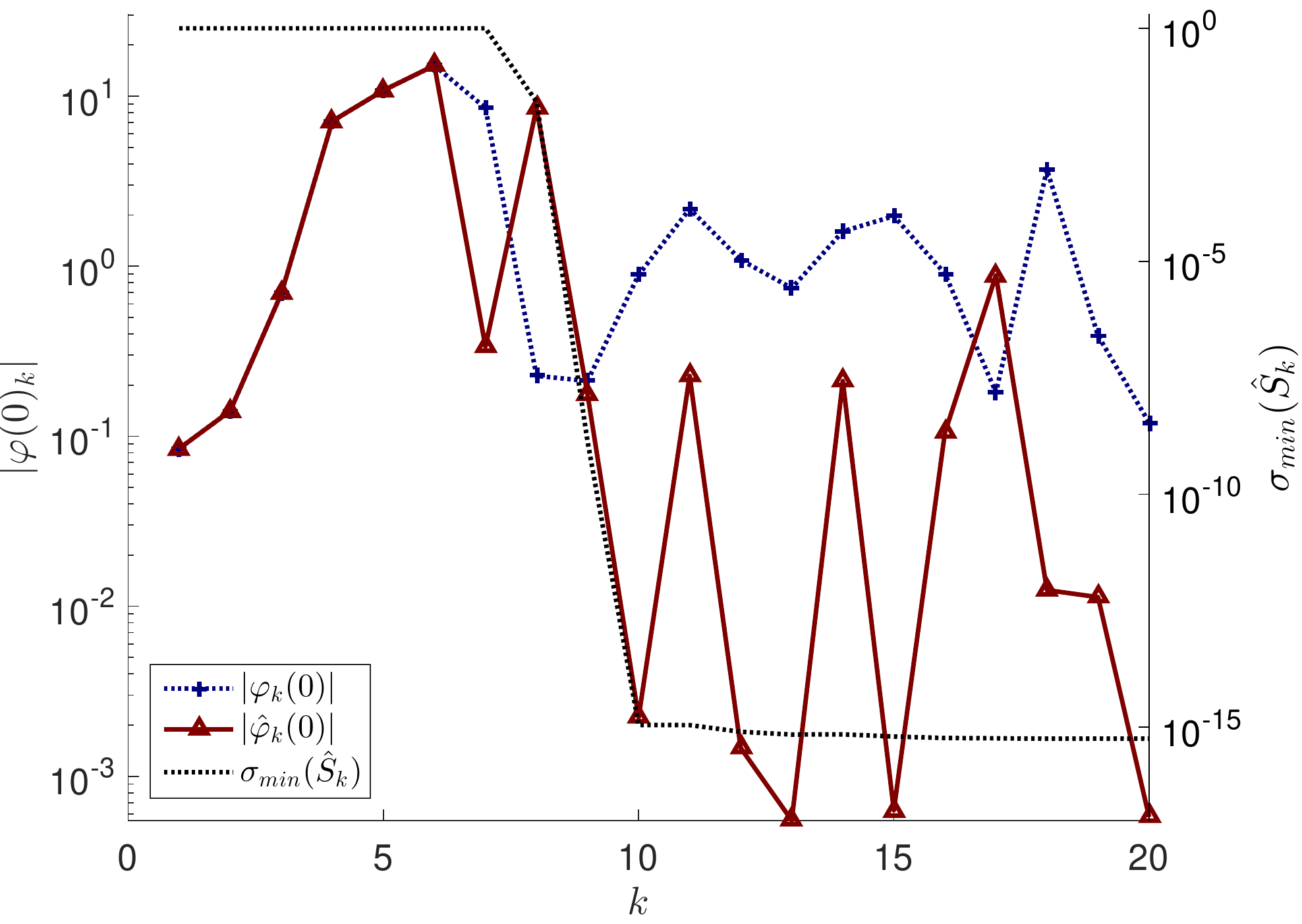}
                \caption{ \texttt{shaw(400)}, $\delta_\text{noise} = 10^{-3}$}
        \end{subfigure}
        \begin{subfigure}[b]{0.49\textwidth}
        		\centering
                \includegraphics[height=4cm]{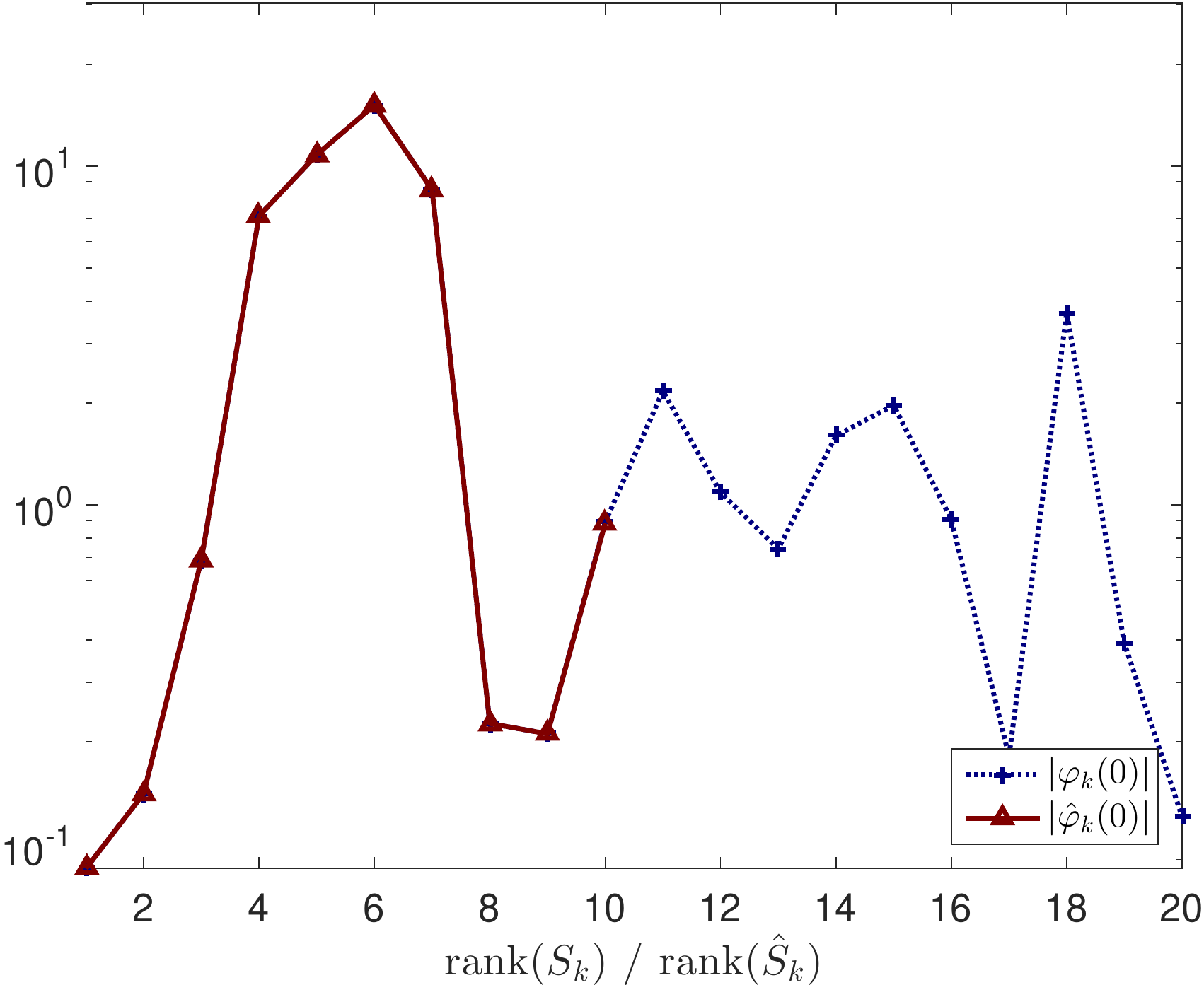}
                \caption{curves from the left after shift}
        \end{subfigure}

        \vspace*{.3cm}

                \begin{subfigure}[b]{0.49\textwidth}
        		\centering
                \includegraphics[height=4cm]{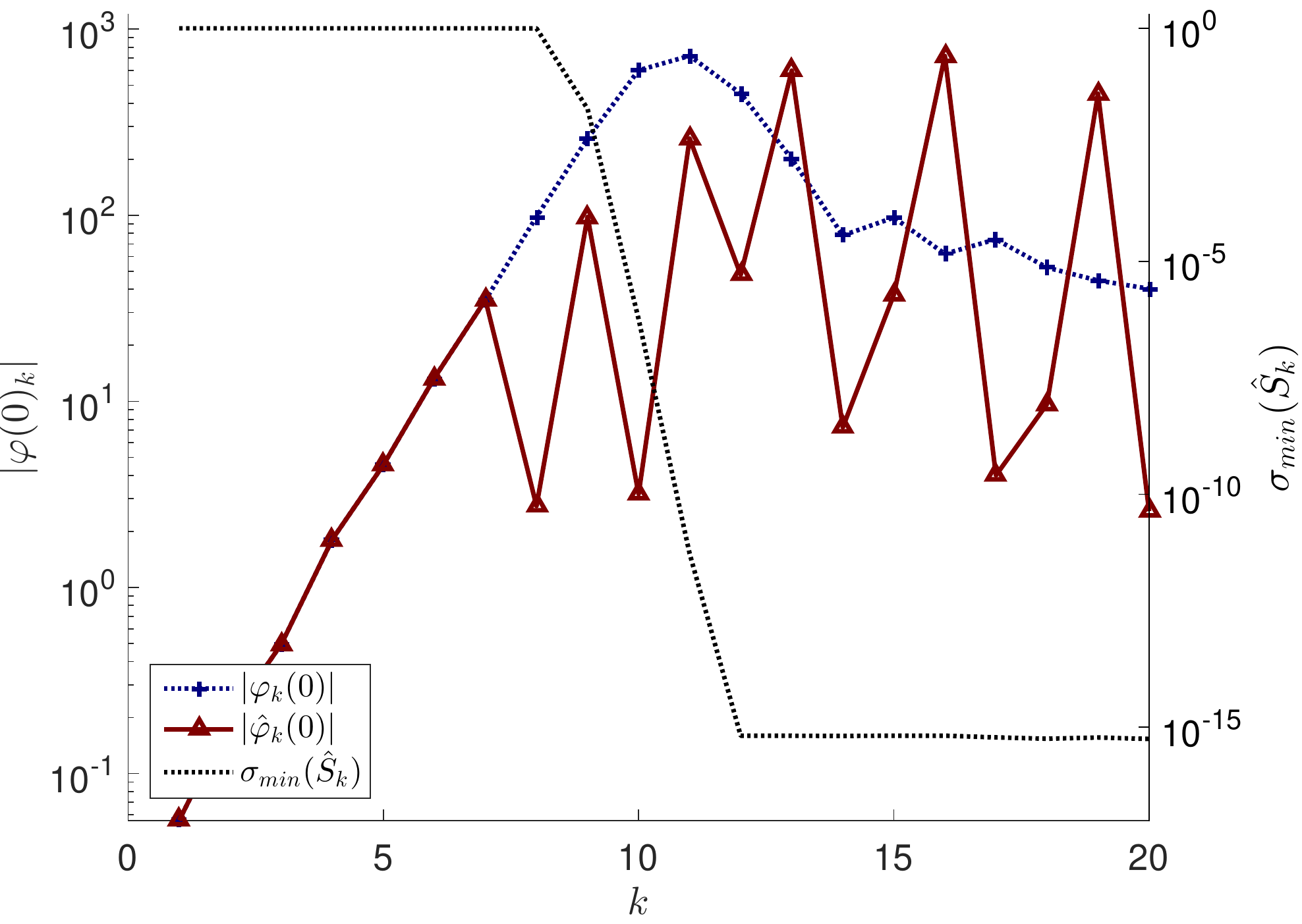}
                \caption{ \texttt{gravity(400)}, $\delta_\text{noise} = 10^{-5}$}
        \end{subfigure}
        \begin{subfigure}[b]{0.49\textwidth}
        		\centering
                \includegraphics[height=4cm]{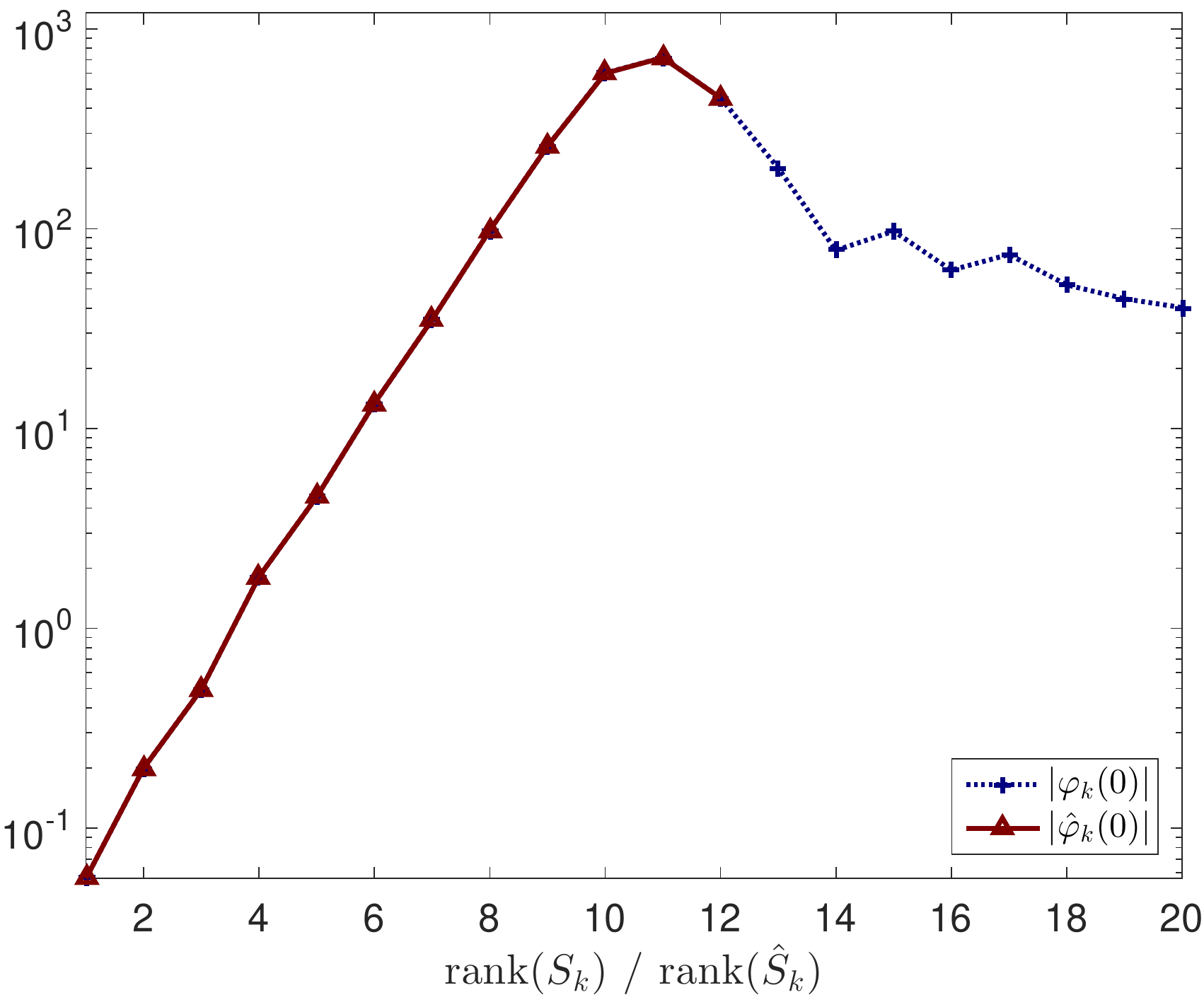}
                \caption{curves from the left after shift}
        \end{subfigure}
        \caption{Illustration of the noise amplification for the problem \texttt{shaw} and \texttt{gravity} in finite precision computations. Left: The sizes of the amplification factor \eqref{eq:ampl_factor}  computed with full double reorthogonalization ($\varphi_k(0)$) and without reorthogonalization ($\hat\varphi_k(0)$). Right: $|\hat\varphi_k(0)|$ plotted against  $\text{rank}(\hat{S}_k)$ computed as \texttt{rank(S(:,1:k),1e-1)} in MATLAB, together with $|\varphi_k(0)|$ plotted against $\text{rank}({S}_k) = k$.}\label{fig:gamma_shift}
\end{figure}

\section{Noise in the residuals of iterative methods}\label{sec:residuals_noise}

CRAIG \cite{Craig1955N}, LSQR \cite{Paige1982LSQR}, and LSMR \cite{Fong2011LSMR} represent three methods based on the Golub-Kahan iterative bidiagonalization. At the $k$-th step, they search for the approximation of the solution in the subspace generated by vectors $w_1,\ldots,w_k$, i.e.,
\begin{equation}
 x_k = W_ky_k, \qquad y_k \in \mathbb{R}^k. \label{eq:projected}
\end{equation}
The corresponding residual has the form
\begin{align}
r_k \equiv b-Ax_k =  b - AW_ky_k &= S_{k+1}(\beta_1e_1 - L_{k+}y_k). \label{eq:residual}
\end{align}
CRAIG minimizes the distance of $x_k$ from the naive solution yielding 
\begin{equation}
L_ky_k^\text{CRAIG} = \beta_1e_1.\label{eq:CRAIG_projected}
\end{equation}
LSQR minimizes the norm of the residual $r_k$ yielding
\begin{equation}
y_k^\text{LSQR} = \underset{y\in\mathbb{R}^k}{\operatorname{argmin}}\|\beta_1e_1 - L_{k+}y\|. \label{eq:LSQR_projected}
\end{equation}
LSMR minimizes the norm of $A^Tr_k$ giving
\begin{equation}
y_k^\text{LSMR} = \underset{y\in\mathbb{R}^k}{\operatorname{argmin}}\|\beta_1\alpha_1e_1 - L_{k+1}^TL_{k+}y\|. \label{eq:LSMR_projected}
\end{equation}
These methods are mathematically equivalent to Krylov subspace methods based on the Lanczos tridiagonalization 
(particularly Lanczos for linear systems and MINRES) applied to particular normal equations. 
The relations useful in the following derivations are summarized
in Table~\ref{tab:methods_ne}.
\begin{table}
\caption{Interpretation of bidiagonalization-based methods (CRAIG, LSQR, LSMR) as tridiagonalization-based methods 
(Lanczos for linear systems, MINRES) applied to the corresponding normal equations.
In last two columns, the solution $x$ of the bidiagonalization-based methods is obtained from their tridiagonalization counterparts as $x = A^Ty$ and $x = A^TAz$, 
respectively. See also \cite{Fong2011LSMR}.}\label{tab:methods_ne}
\centering
\begin{tabular}{c|cccc}
\hline
method/equation & $(A^TA)x = A^Tb$ & $(AA^T)y = b$ & $(A^TA)z = A^\dagger b$  \\
\hline
Lanczos method  & LSQR$(A,b)$ & CRAIG$(A,b)$ & --- \\
MINRES          & LSMR$(A,b)$ & LSQR$(A,b)$ & CRAIG$(A,b)$
\end{tabular}
\end{table}

Since Lanczos method is a Galerkin (residual orthogonalization) method, we immediately see that
\begin{align}
r_k^\text{CRAIG} &= (-1)^k\|r_k^\text{CRAIG}\|s_{k+1},\\ \label{eq:residuals_Galerkin}
A^Tr_k^\text{LSQR} &= (-1)^k\|A^Tr_k^\text{LSQR}\|w_{k+1}.
\end{align}
Using the relation between the Galerkin an the residual minimization method, see\cite[sec. 6.5.7]{Saad2003Iterative}, we obtain, 
\begin{align}
\|r_k^\text{LSQR}\|  & = \frac{1}{\sqrt{\sum_{l=0}^k{1/\|r_l^\text{CRAIG}\|^2}}},\label{eq:res_norm_LSQR}\\
\|A^Tr_k^\text{LSMR}\|  & = \frac{1}{\sqrt{\sum_{l=0}^k{1/\|A^Tr_l^\text{LSQR}\|^2}}}. \label{eq:res_nomr_LSMR}
\end{align}
Note that these equations hold, up to a small perturbation, also in finite precision computations. See \cite{Cullum1996Relations} for more details.

In the rest of this section, we investigate the residuals of each particular method. We focus on in which sense the residuals approximate the noise vector.
We discuss particularly the case when noise contaminates the bidiagonalization vectors fast and thus the noise revealing
iteration is well defined. More general discussion follows in Section~\ref{sec:2Dproblems}.

\subsection{CRAIG residuals}\label{sec:relation_CRAIG}
The following result relates approximate solution obtained by CRAIG  for \eqref{eq:theproblem} to the solution of the problem with the same matrix and a modified right-hand side.  

\begin{prop}\label{th:1}
Consider the first $k$ steps of the Golub-Kahan iterative bidiagonalization. Then the approximation $x_{k}^\text{CRAIG}$ defined in \eqref{eq:projected} and \eqref{eq:CRAIG_projected}, 
is an exact solution to a consistent problem
\begin{equation}
Ax = b - \varphi_k(0)^{-1}s_{k+1}. \label{eq:mod}
\end{equation}
Consequently, 
\begin{equation}
\|r_k^\text{CRAIG}\|  = |\varphi_k(0)|^{-1}.\label{eq:residual_CRAIG}
\end{equation}

\end{prop}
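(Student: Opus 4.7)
The plan is to compute the CRAIG residual $r_k^{\text{CRAIG}}$ explicitly from the projected formulation and show directly that it equals $\varphi_k(0)^{-1} s_{k+1}$; the consistency statement \eqref{eq:mod} and the norm identity \eqref{eq:residual_CRAIG} then fall out immediately. The key ingredients are the residual formula \eqref{eq:residual}, the CRAIG projected equation \eqref{eq:CRAIG_projected}, the explicit form of the noise amplification factor \eqref{eq:ampl_factor}, and the lower bidiagonal structure of $L_k$.

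First I would plug \eqref{eq:CRAIG_projected} into \eqref{eq:residual}. Because of the block form $L_{k+}=\bigl[L_k;\,\beta_{k+1}e_k^T\bigr]$, the CRAIG condition $L_k y_k^{\text{CRAIG}}=\beta_1 e_1$ cancels the first $k$ components of $\beta_1 e_1 - L_{k+} y_k^{\text{CRAIG}}\in\mathbb{R}^{k+1}$ exactly, leaving only
\[
\beta_1 e_1 - L_{k+}y_k^{\text{CRAIG}} = -\beta_{k+1}\bigl(e_k^T y_k^{\text{CRAIG}}\bigr)\,e_{k+1}.
\]
Multiplying by $S_{k+1}$ on the left picks out $s_{k+1}$ from the last column, which shows that $r_k^{\text{CRAIG}}$ is a scalar multiple of $s_{k+1}$. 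This already reproduces the Galerkin identity in \eqref{eq:residuals_Galerkin} without invoking it; the remaining task is to identify the scalar.

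Next I would solve the lower bidiagonal system $L_k y = \beta_1 e_1$ by forward substitution. A one-line induction using the recurrence $\beta_j y_{j-1} + \alpha_j y_j = 0$ for $j\ge 2$ (starting from $y_1 = \beta_1/\alpha_1$) yields
\[
e_k^T y_k^{\text{CRAIG}} \;=\; (-1)^{k-1}\prod_{j=1}^{k}\frac{\beta_j}{\alpha_j}.
\]
Combining this with the previous expression gives $r_k^{\text{CRAIG}} = (-1)^{k}\beta_{k+1}\prod_{j=1}^{k}(\beta_j/\alpha_j)\, s_{k+1}$, and a direct comparison with \eqref{eq:ampl_factor} shows that the leading scalar is exactly $\varphi_k(0)^{-1}$.

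Finally, I would read off the conclusions: rearranging $r_k^{\text{CRAIG}} = \varphi_k(0)^{-1} s_{k+1}$ yields $A x_k^{\text{CRAIG}} = b - \varphi_k(0)^{-1}s_{k+1}$, proving that $x_k^{\text{CRAIG}}$ is an exact solution of the consistent modified system \eqref{eq:mod}; taking norms and using $\|s_{k+1}\|=1$ gives \eqref{eq:residual_CRAIG}. I expect no genuine obstacle here — the only thing to watch is sign bookkeeping, since the factors of $(-1)^{k-1}$ from forward substitution, the extra minus from the last row of $L_{k+}$, and the $(-1)^k$ in \eqref{eq:ampl_factor} must combine cleanly, but they do.
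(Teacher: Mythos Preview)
Your argument is correct. It differs from the paper's proof in how the scalar in front of $s_{k+1}$ is identified. The paper invokes the Galerkin property \eqref{eq:residuals_Galerkin} to write $r_k^{\text{CRAIG}}=c\,\varphi_k(AA^T)b$, then observes separately that any Krylov residual with $x_0=0$ has the form $\Pi_k(AA^T)b$ with $\Pi_k(0)=1$; matching constant terms forces $c=\varphi_k(0)^{-1}$. You instead bypass the polynomial viewpoint entirely: you read $r_k^{\text{CRAIG}}=-\beta_{k+1}(e_k^Ty_k^{\text{CRAIG}})\,s_{k+1}$ directly from the block structure of $L_{k+}$, compute $e_k^Ty_k^{\text{CRAIG}}$ by forward substitution on the bidiagonal system, and recognize the resulting product as $\varphi_k(0)^{-1}$ via \eqref{eq:ampl_factor}. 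Your route is more elementary and self-contained (it even rederives the Galerkin alignment rather than assuming it), while the paper's route is shorter once the polynomial machinery is in place and dovetails with the noise-propagation narrative built around $\varphi_k$. Either way the sign bookkeeping you flag does work out: $-\beta_{k+1}\cdot(-1)^{k-1}\prod_{j}\beta_j/\alpha_j=(-1)^k\beta_{k+1}\prod_j\beta_j/\alpha_j=\varphi_k(0)^{-1}$.
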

\begin{proof}First note that we only need to show that $r_k^\text{CRAIG} = \varphi_k(0)^{-1}s_{k+1}$, $k=1,2,\ldots$.
From \eqref{eq:residuals_Galerkin} and \eqref{eq:lanczos_poly} it follows that there exist $c\in \mathbb{R}$, such that 
\begin{equation}r_k^\text{CRAIG} = c \cdot s_{k+1} = c \cdot \varphi_k(AA^T)b.
\end{equation} Let us now determine the constant $c$. From \eqref{eq:residual} and \eqref{eq:lanczos_poly}, we have that
\begin{equation}
r_k^\text{CRAIG} = \Pi_k (AA^T)b, \quad \text{where} \quad \Pi_k \in \mathcal{P}_k \ \ \text{and} \ \ \Pi_k(0) = 1. \label{eq:Pi}
\end{equation} 
Combining these two equations, we obtain
\begin{equation}
r_k^\text{CRAIG}  = \varphi_k(0)^{-1}\varphi_k(AA^T)b.\label{eq:craig_res}
\end{equation}
Substituting to \eqref{eq:craig_res} back from \eqref{eq:lanczos_poly}, we immediately have \eqref{eq:mod}. Since $\|s_{k+1}\| = 1$, \eqref{eq:residual_CRAIG} is a direct consequence of \eqref{eq:mod}. \qed
\end{proof}
  
Although the relation \eqref{eq:mod} is valid for any problem of the form \eqref{eq:theproblem}, it has a particularly interesting interpretation for inverse problems with a smoothing operator $A$. Suppose we neglect the low-frequency part $s_{k+1}^\text{LF}$ in \eqref{eq:poly_split_simple} and estimate the unknown noise $\eta$ from the left bidiagonalization vector $s_{k+1}$ as
\begin{equation}
\eta \approx \tilde{\eta} \equiv \varphi_k(0)^{-1} s_{k+1}.\label{eq:noise_est}
\end{equation}
Subtracting $\tilde\eta$ from $b$ in \eqref{eq:theproblem}, we obtain exactly the modified  problem \eqref{eq:mod}.
Thus Proposition~\ref{th:1} in fact states that in each iteration $k$, $x_k^\text{CRAIG}$ represents the exact solution of the problem 
\eqref{eq:transformed} with noise being approximated by a particular re-scaled left bidiagonalization vector. 

The norm of the CRAIG residual $r_k^\text{CRAIG}$ is inversely proportional to the amount of noise propagated to the currently computed left bidiagonalization vector. It reaches its minimum exactly in the noise revealing iteration $k = k_\text{rev}$, which corresponds to 
the iteration with \eqref{eq:noise_est} being the best approximation of the unknown noise vector. 
The actual noise vector $\eta$ and the difference $\eta - \tilde{\eta}$ for $\tilde\eta$ obtained from $s_{k_\text{rev}+1}$ are compared in Figure~\ref{fig:noise_red}; see also \cite{Michenkova2013Regularization}. We see that in iteration $k_\text{rev}$, the troublesome high-frequency part of noise is perfectly removed. The remaining perturbation only contains smoothed, i.e., low-frequency part of the original noise vector. 
The match in \eqref{eq:residual_CRAIG} remains valid, up to a small perturbation, also in finite precision computations, since the noise propagation is preserved, see Section~\ref{sec:behavior}

\begin{figure}
\centering
	\begin{subfigure}[b]{.19\textwidth}
	\captionsetup{justification=centering, size = scriptsize}
	\includegraphics[width=.95\textwidth]{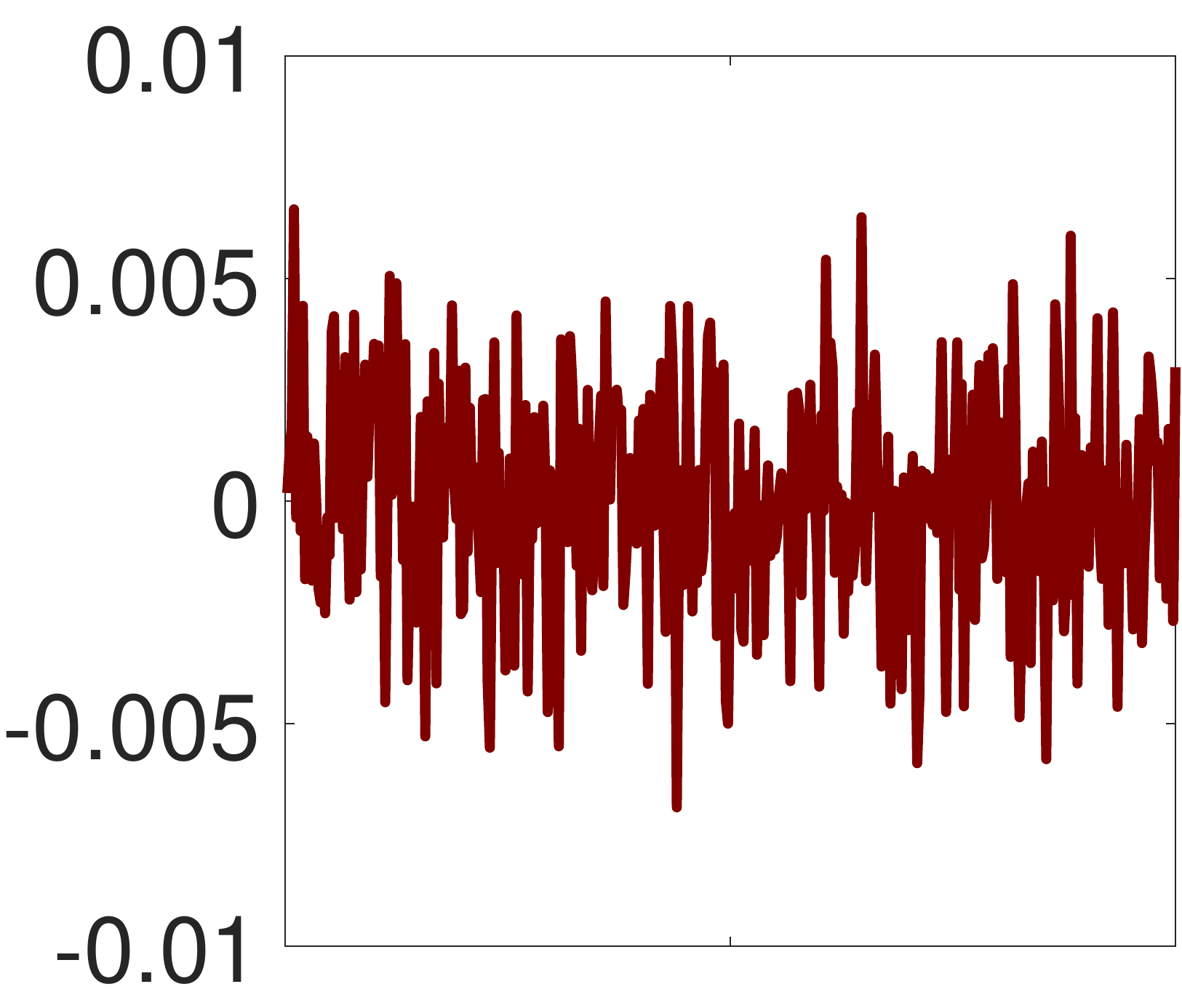}
	
	\vspace*{.2cm}
	
	\includegraphics[width=.95\textwidth]{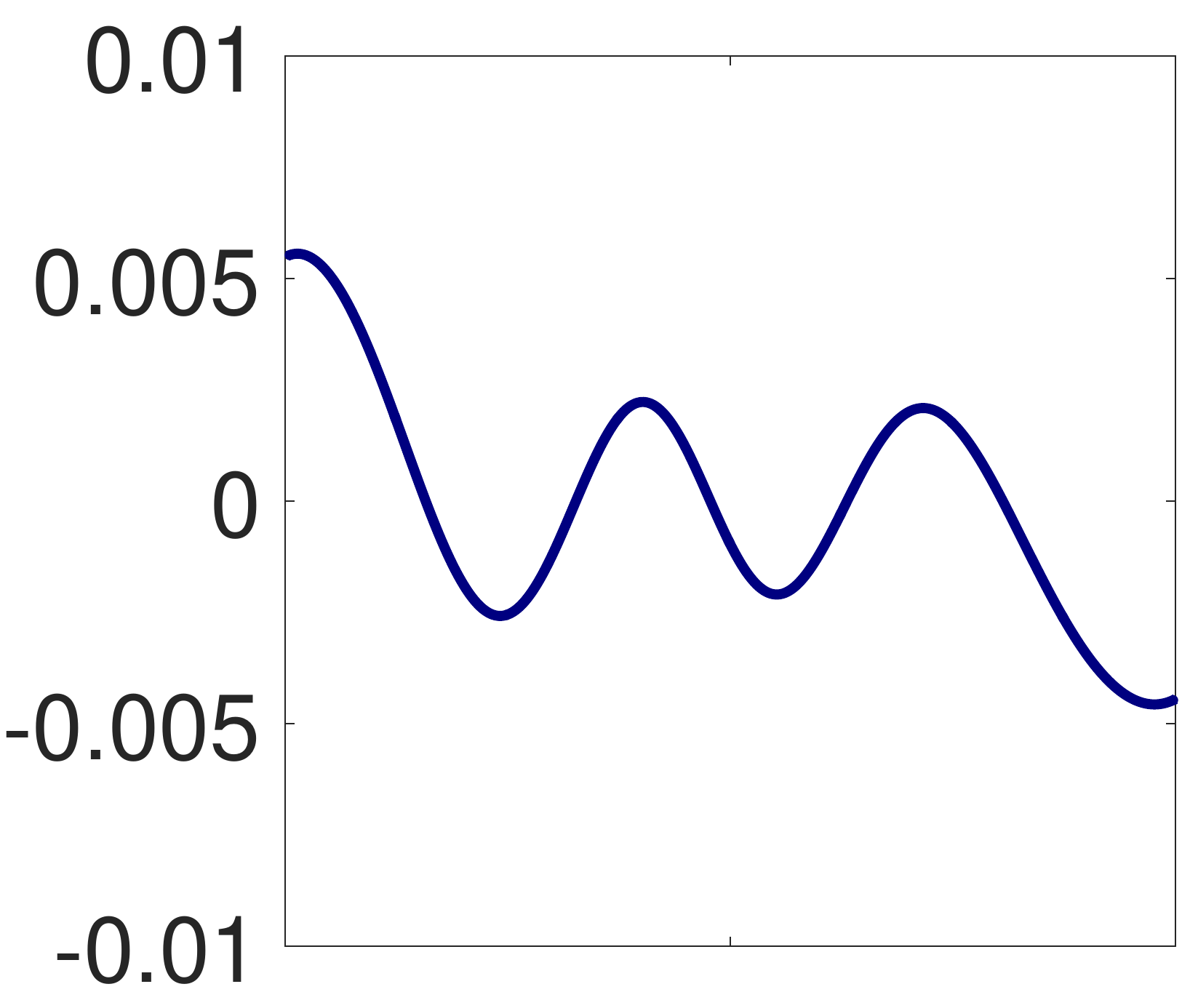}
	\caption{\texttt{shaw(400)},\\ $\delta_\text{noise} = 10^{-3}$,\\ white noise}
	\end{subfigure}
	\begin{subfigure}[b]{.19\textwidth}
	\captionsetup{justification=centering,size = scriptsize}
	\includegraphics[width=.95\textwidth]{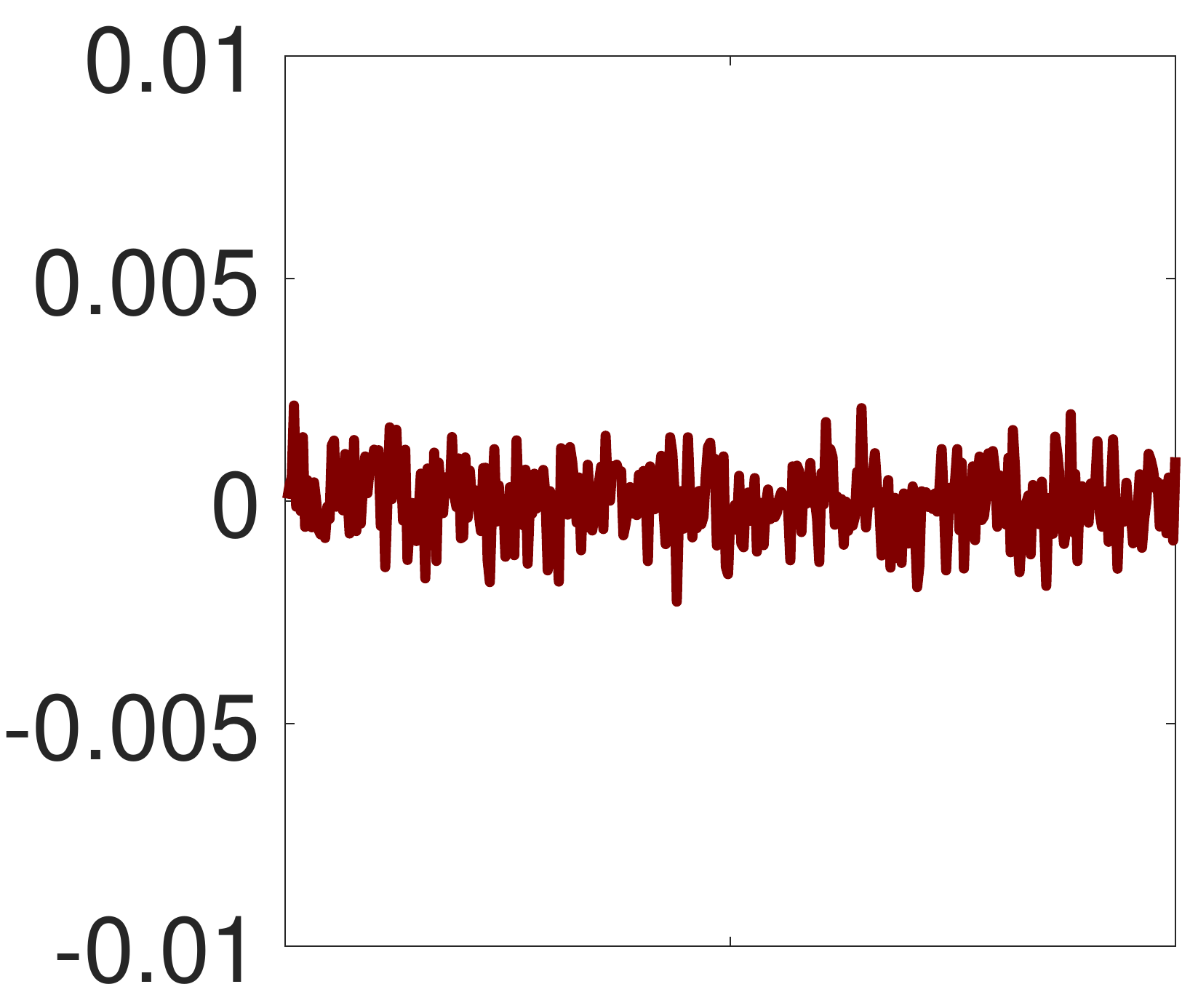}
	
	\vspace*{.2cm}
	
	\includegraphics[width=.95\textwidth]{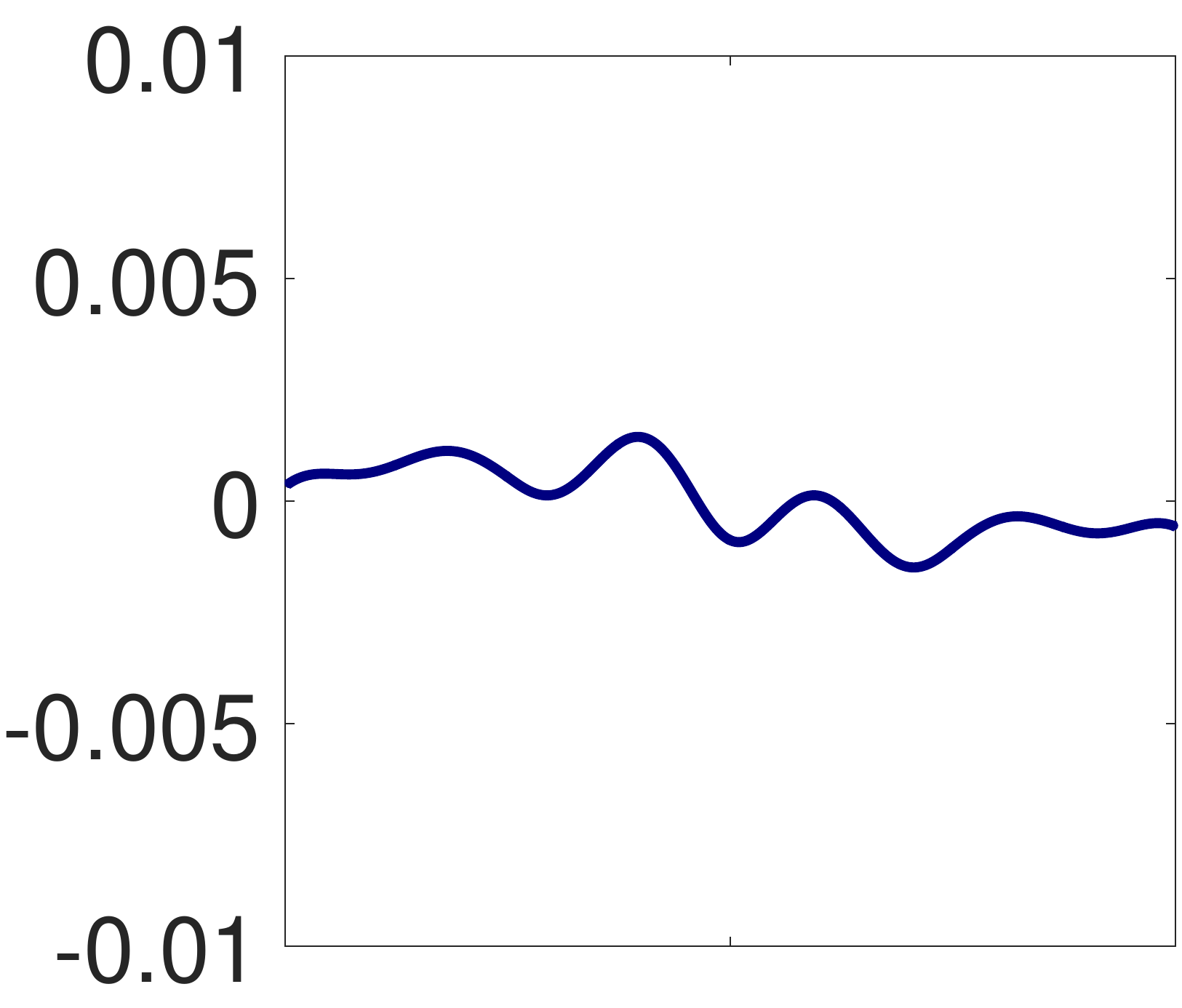}
	\caption{\texttt{phillips(400)},\\ $\delta_\text{noise} = 10^{-3}$,\\ white noise}
	\end{subfigure}
	\begin{subfigure}[b]{.19\textwidth}
	\captionsetup{justification=centering,size = scriptsize}
	\includegraphics[width=.95\textwidth]{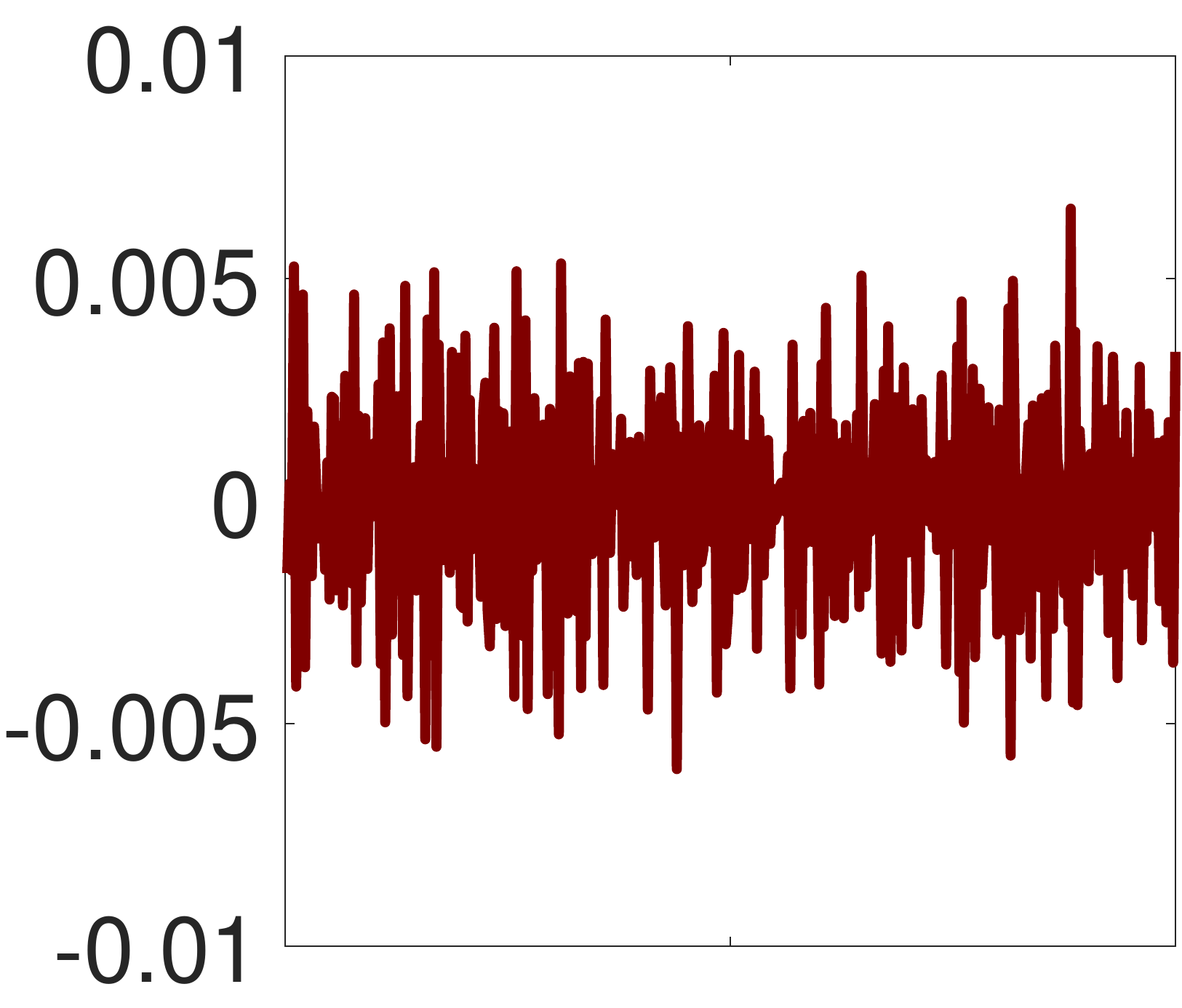}
	
	\vspace*{.2cm}
	
	\includegraphics[width=.95\textwidth]{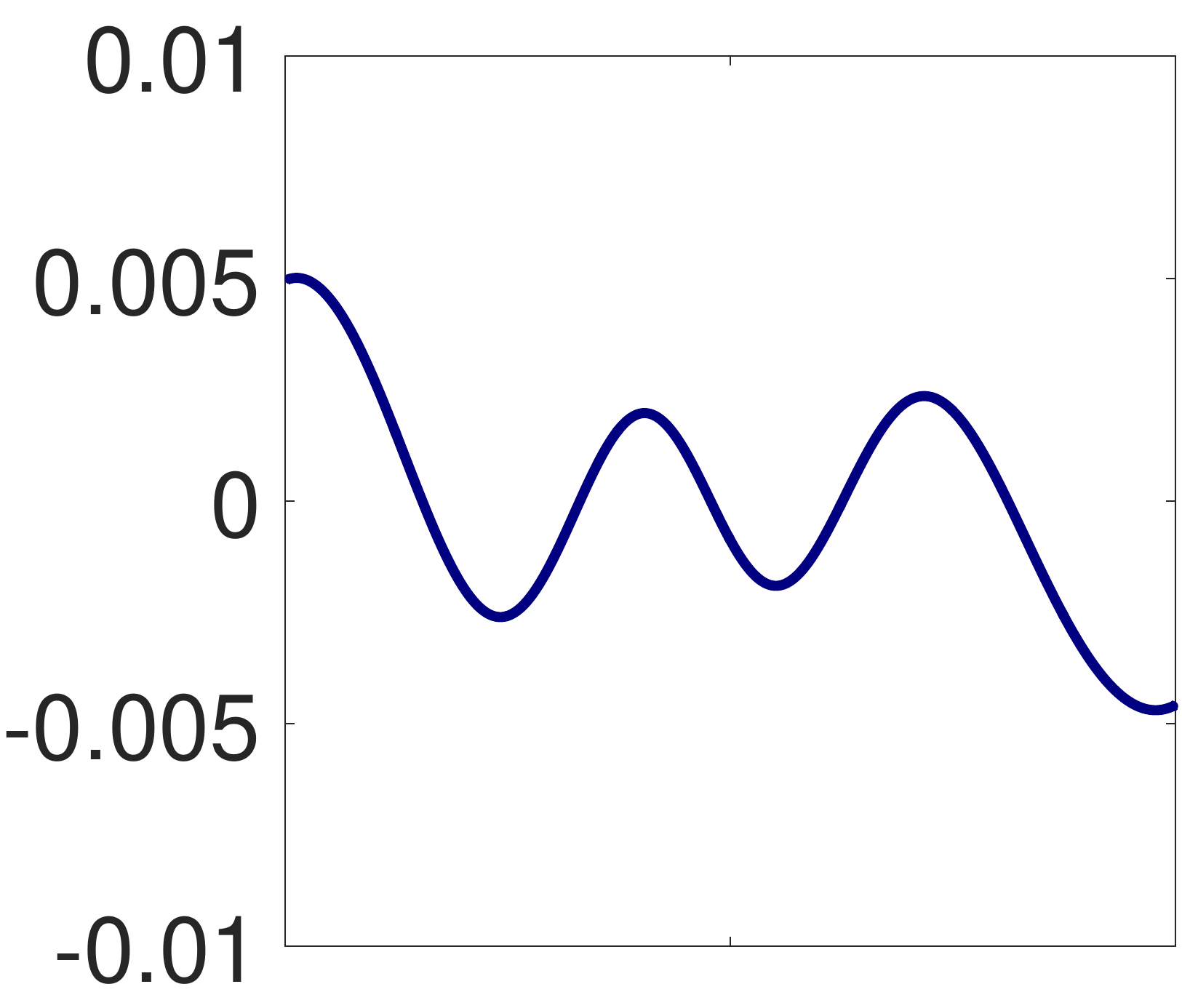}
	\caption{\texttt{shaw(400)},\\ $\delta_\text{noise} = 10^{-3}$,\\ violet noise}
	\end{subfigure}
	\begin{subfigure}[b]{.19\textwidth}
	\captionsetup{justification=centering,size = scriptsize}
	\includegraphics[width=.95\textwidth]{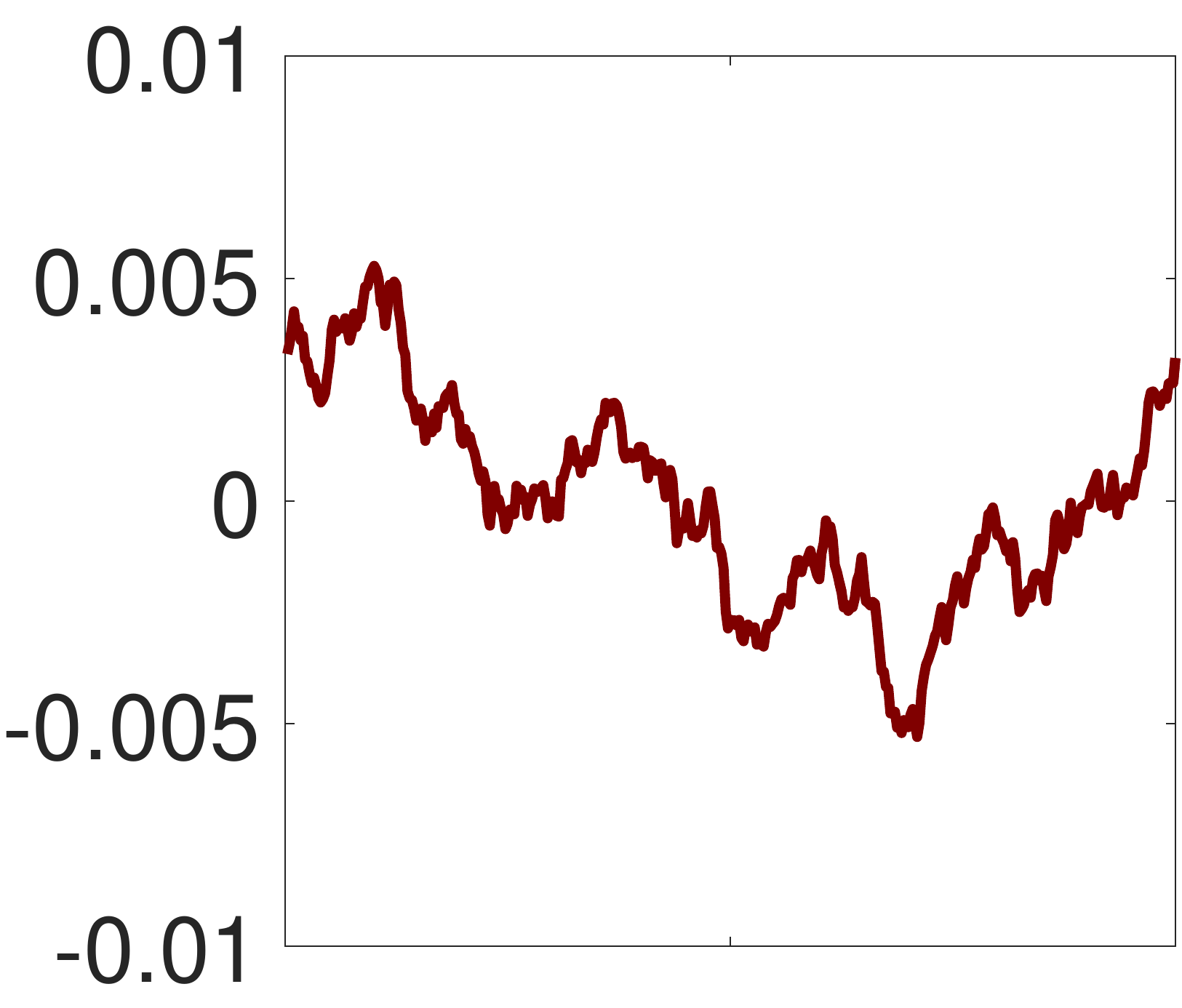}
	
	\vspace*{.2cm}
	
	\includegraphics[width=.95\textwidth]{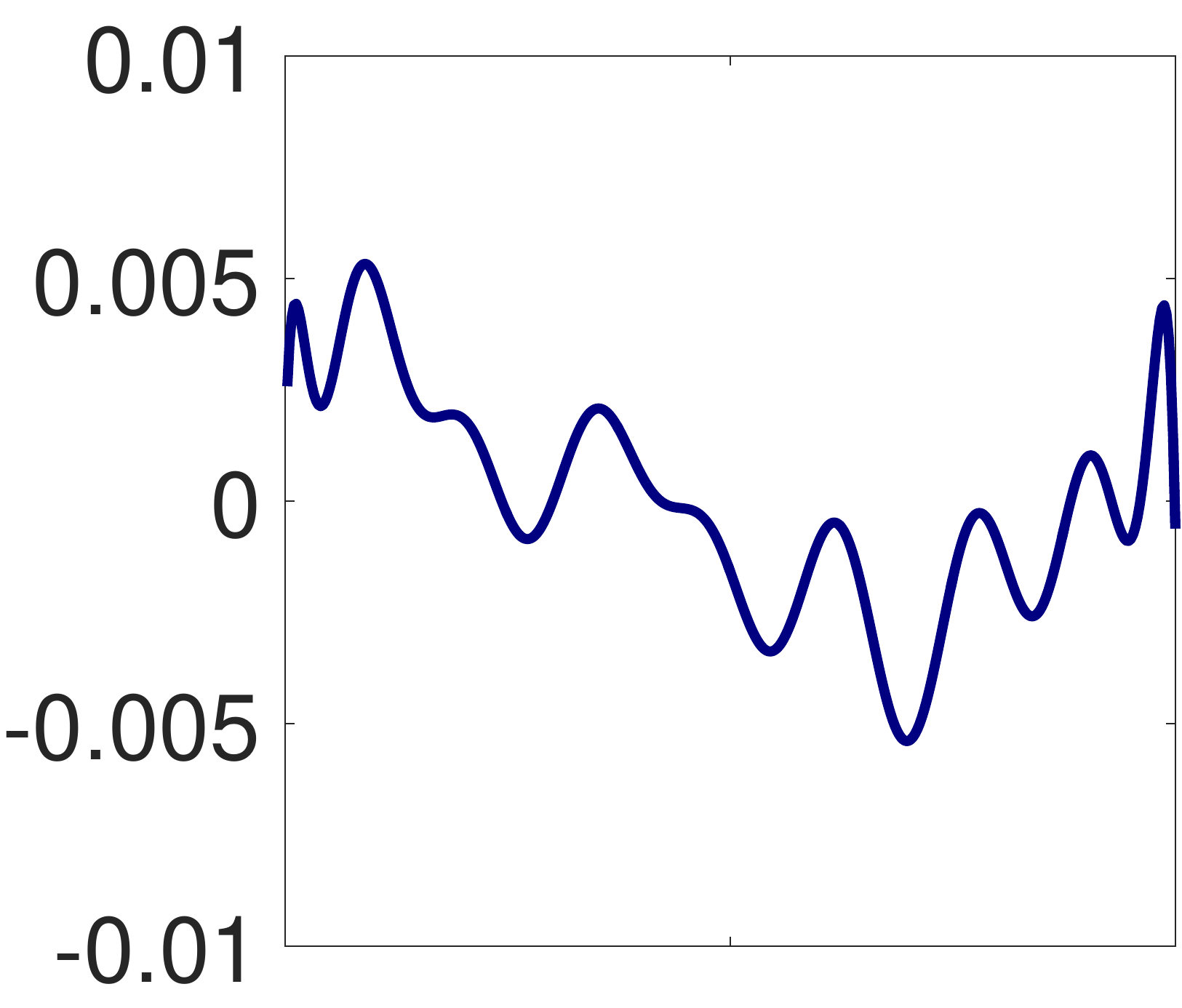}
	\caption{\texttt{shaw(400)},\\ $\delta_\text{noise} = 10^{-3}$,\\ red noise}
	\end{subfigure}
	\begin{subfigure}[b]{.19\textwidth}
	\captionsetup{justification=centering,size = scriptsize}
	\includegraphics[width=.91\textwidth]{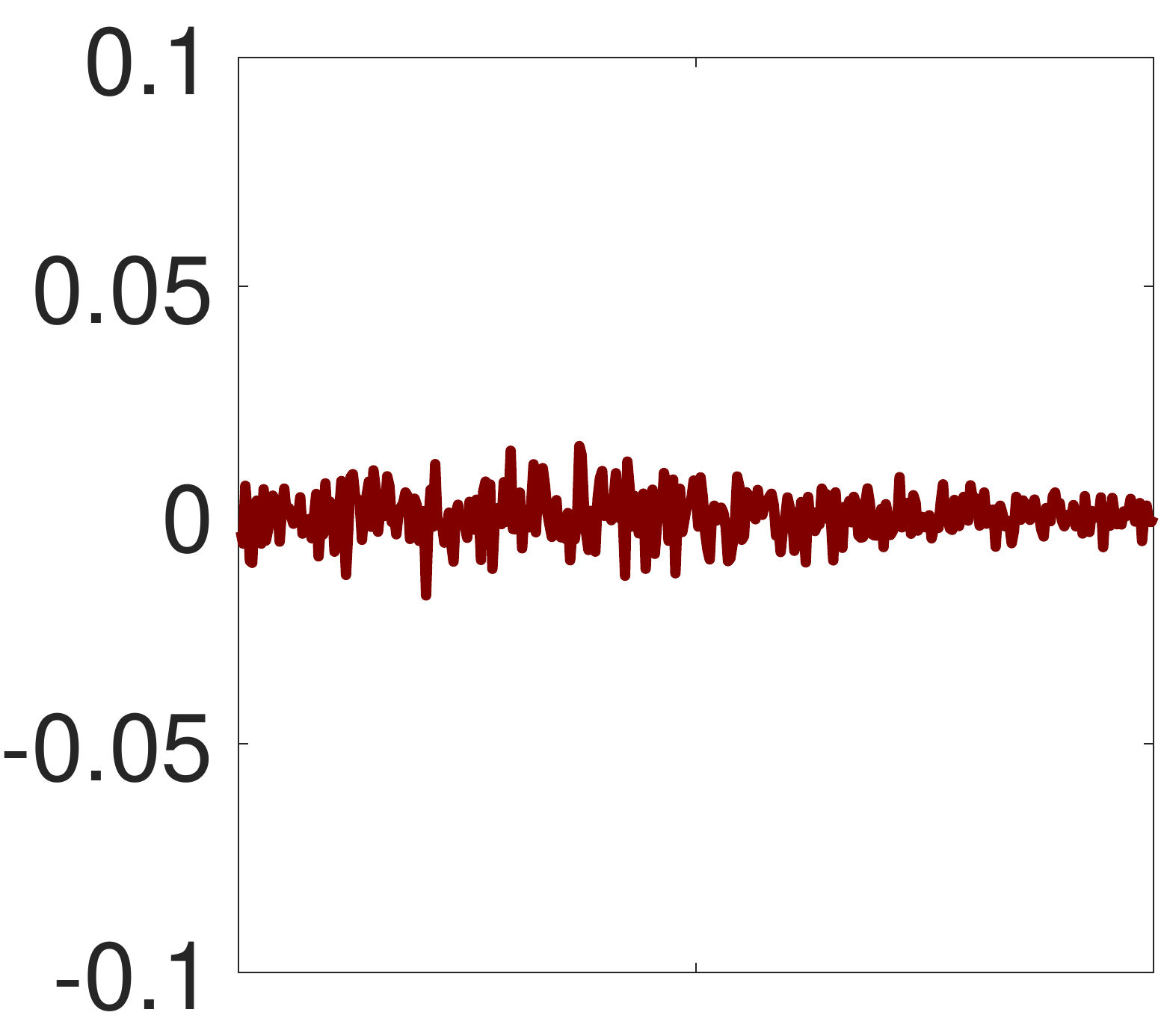}
	
	\vspace*{.2cm}
	
	\includegraphics[width=.91\textwidth]{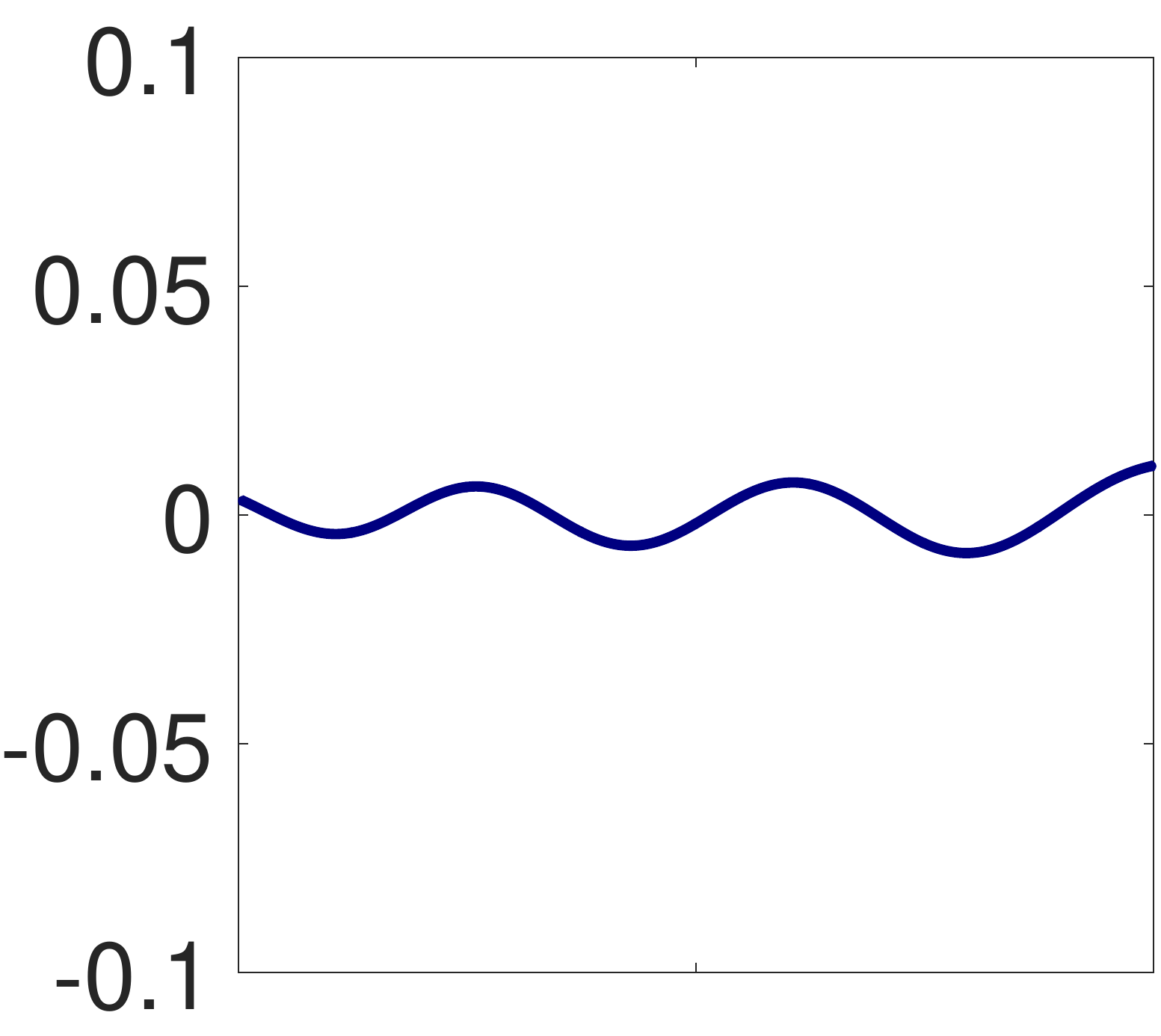}
	\caption{\texttt{gravity(400)},\\ $\delta_\text{noise} = 10^{-3}$,\\ Poisson noise}
	\end{subfigure}
\caption{Illustration of the quality of the noise vector approximation $\tilde{\eta}$ obtained by \eqref{eq:noise_est} for $k={k_\text{rev}+1}$ on various test problems and various characteristics of noise. Upper: The original noise vector $\eta$. Lower: The difference $\eta - \tilde{\eta}$.}
\label{fig:noise_red}
\end{figure}

Note that due to different frequency characteristic of $\eta$ and $s_{k+1}^\text{LF}$ for small $k$, there is a~relatively small cancellation between them and
\begin{equation}
\|s_{k+1}^\text{LF}\|^{2}+\|\varphi_k(0)\eta\|^{2}\approx 1.
\end{equation}
This gives 
\begin{equation}
\|(b - \tilde\eta) - Ax\|  = \|\varphi_k(0)^{-1}s_{k+1}^\text{LF}\|\approx|\varphi_k(0)|^{-1}\sqrt{1-\|\varphi_k(0)\eta\|^{2}} =\sqrt{|\varphi_k(0)|^{-2} -\|\eta\|^{2}}\label{eq:rem_noise_size_est}
\end{equation}
supporting our expectation that the size of the remaining perturbation depends on how closely the inverse amplification factor $|\varphi_k(0)|^{-1}$ approaches $\|\eta\|$.

We may also conclude that for ill-posed problems with a smoothing operator $A$, the minimal error $\|x_k^\text{CRAIG}-x\|$ is reached approximately at the iteration with the maximal noise revealing, i.e.,  with the minimal residual. This is confirmed by numerical experiments in Figure~\ref{fig:error_all} comparing $\|x_k^\text{CRAIG}-x\|$ 
with $\|r_k^\text{CRAIG}\|$ for various test problems and noise characteristics, both with and without reorthogonalization.
\begin{figure}
        \centering
        \begin{subfigure}[b]{0.32\textwidth}
        \captionsetup{justification=centering}
                \includegraphics[width=\textwidth]{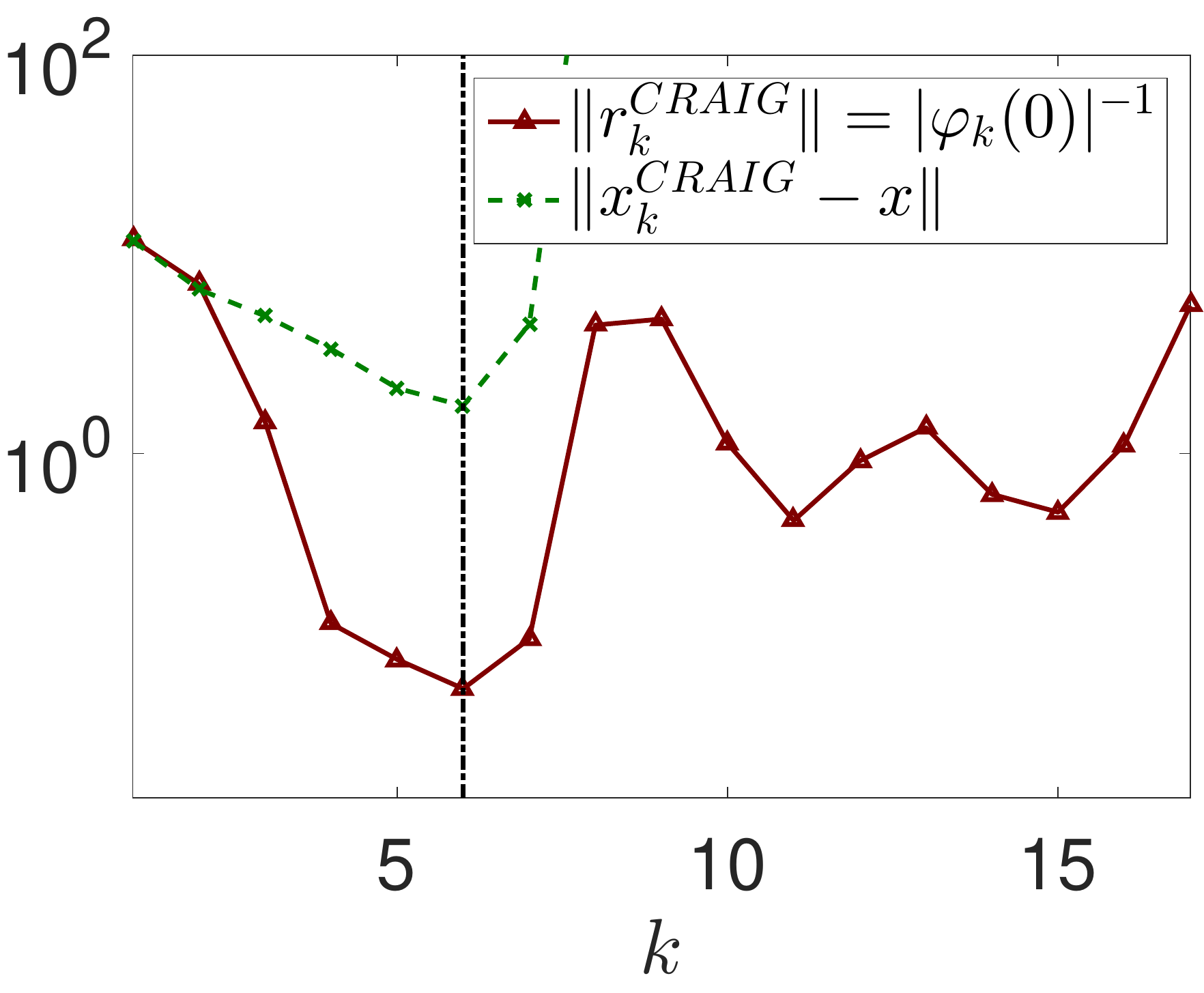}
                \caption{\texttt{shaw(400)},\\ $\delta_\text{noise} = 10^{-3}$, white noise}\label{fig:error}
        \end{subfigure}
        \begin{subfigure}[b]{0.32\textwidth}
        \captionsetup{justification=centering}
                \includegraphics[width=\textwidth]{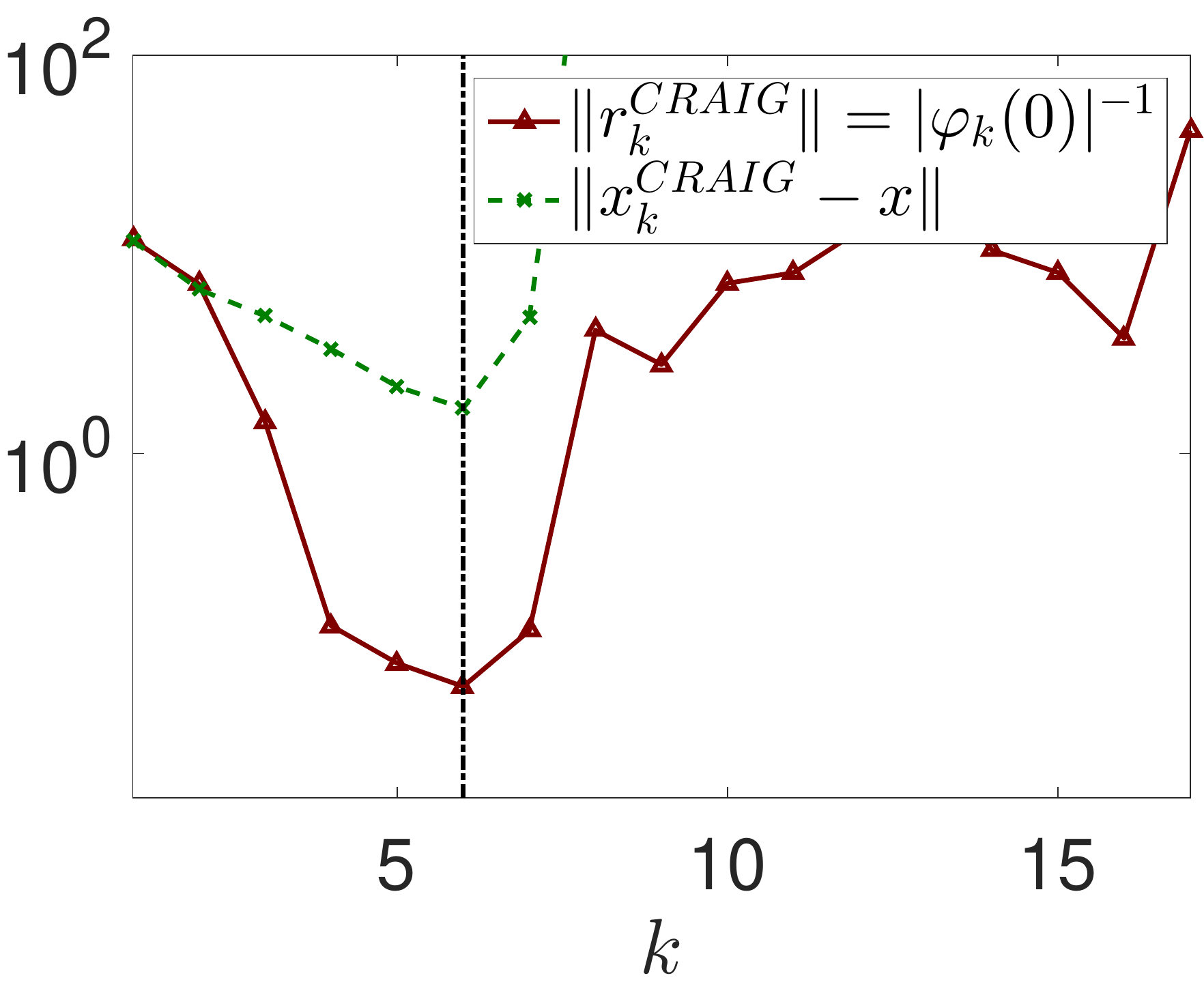}
                \caption{\texttt{shaw(400)}, $\delta_\text{noise} = 10^{-3}$, violet noise}
        \end{subfigure}
        \begin{subfigure}[b]{0.32\textwidth}
        \captionsetup{justification=centering}
                \includegraphics[width=\textwidth]{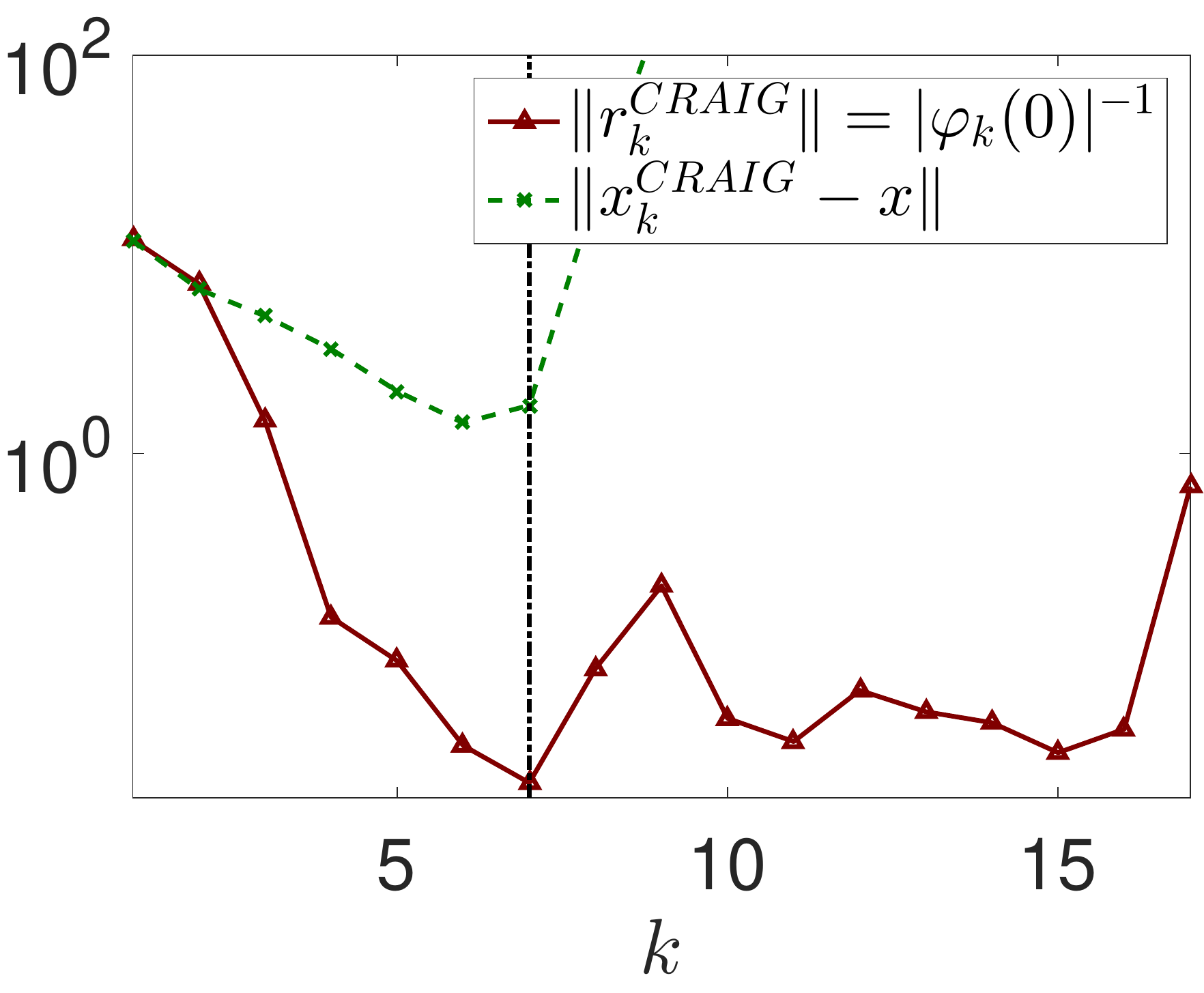}
                \caption{\texttt{shaw(400)}, $\delta_\text{noise} = 10^{-3}$, red noise}
        \end{subfigure}

        \vspace*{.3cm}

        \begin{subfigure}[b]{0.32\textwidth}
        \captionsetup{justification=centering}
                \includegraphics[width=\textwidth]{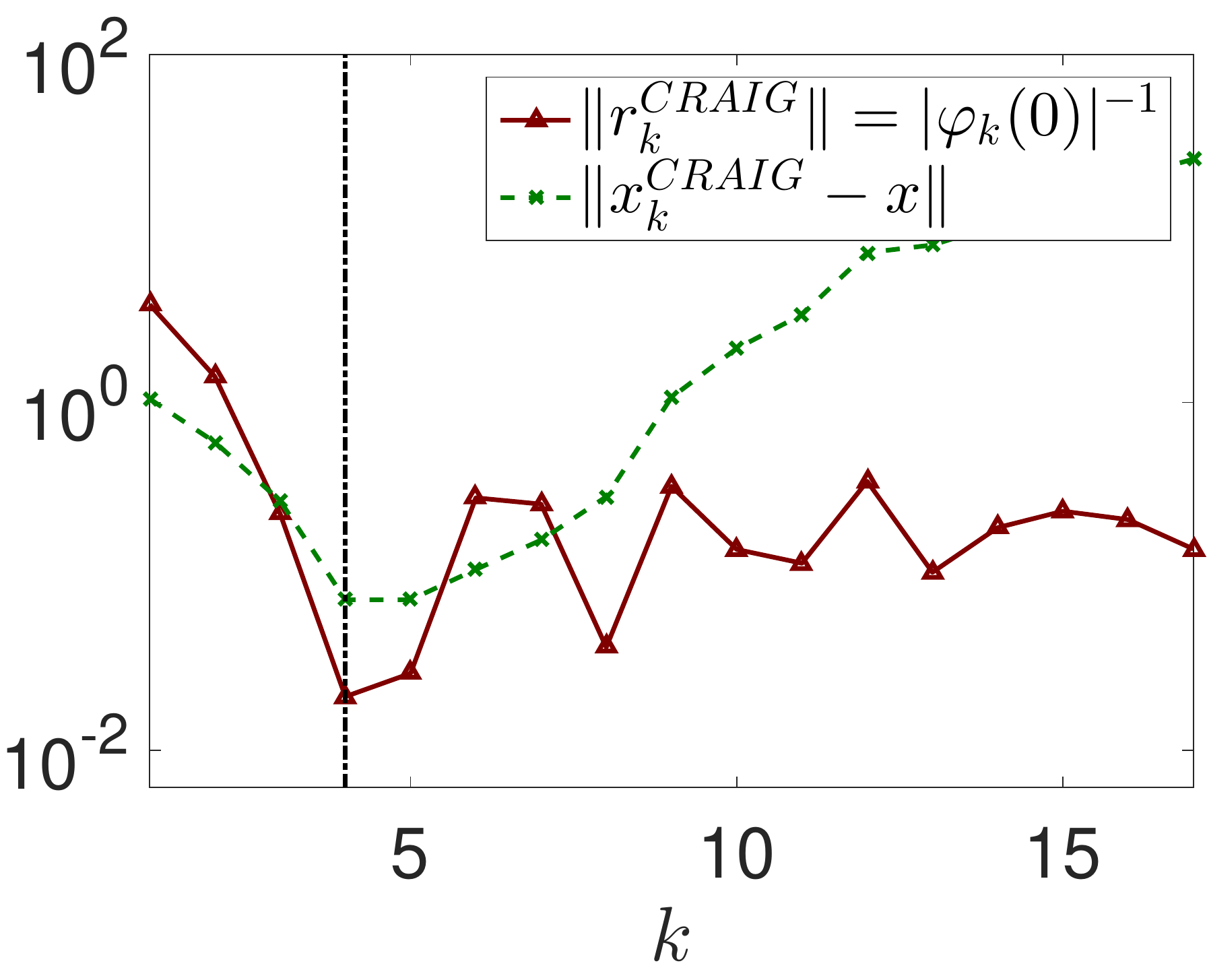}
                \caption{\texttt{phillips(400)}, $\delta_\text{noise} = 10^{-3}$, white noise}
        \end{subfigure}
        \begin{subfigure}[b]{0.32\textwidth}
        \captionsetup{justification=centering}
                \includegraphics[width=\textwidth]{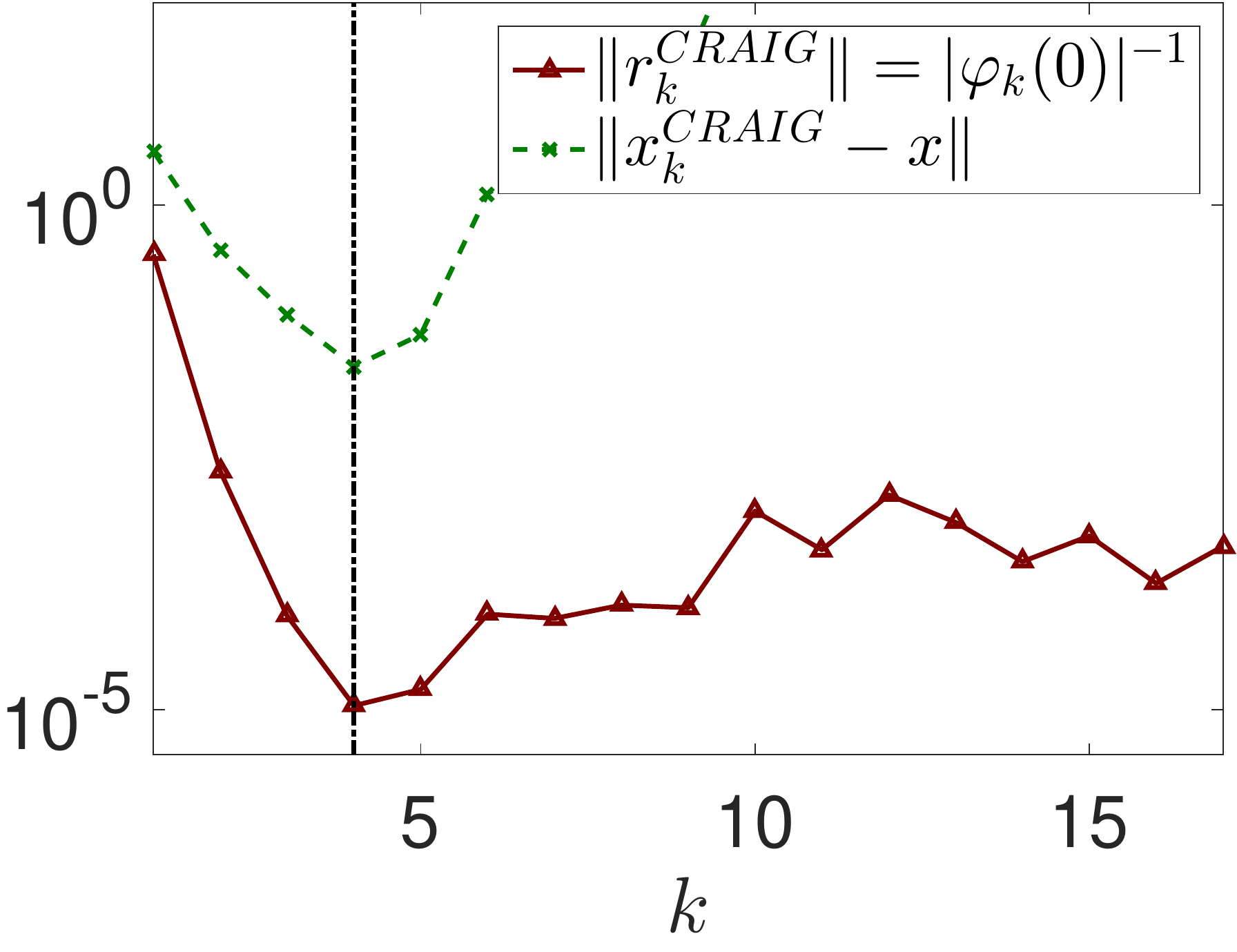}
                \caption{\texttt{foxgood(400)}, $\delta_\text{noise} = 10^{-6}$, white noise}
        \end{subfigure}
        \begin{subfigure}[b]{0.32\textwidth}
        \captionsetup{justification=centering}
                \includegraphics[width=\textwidth]{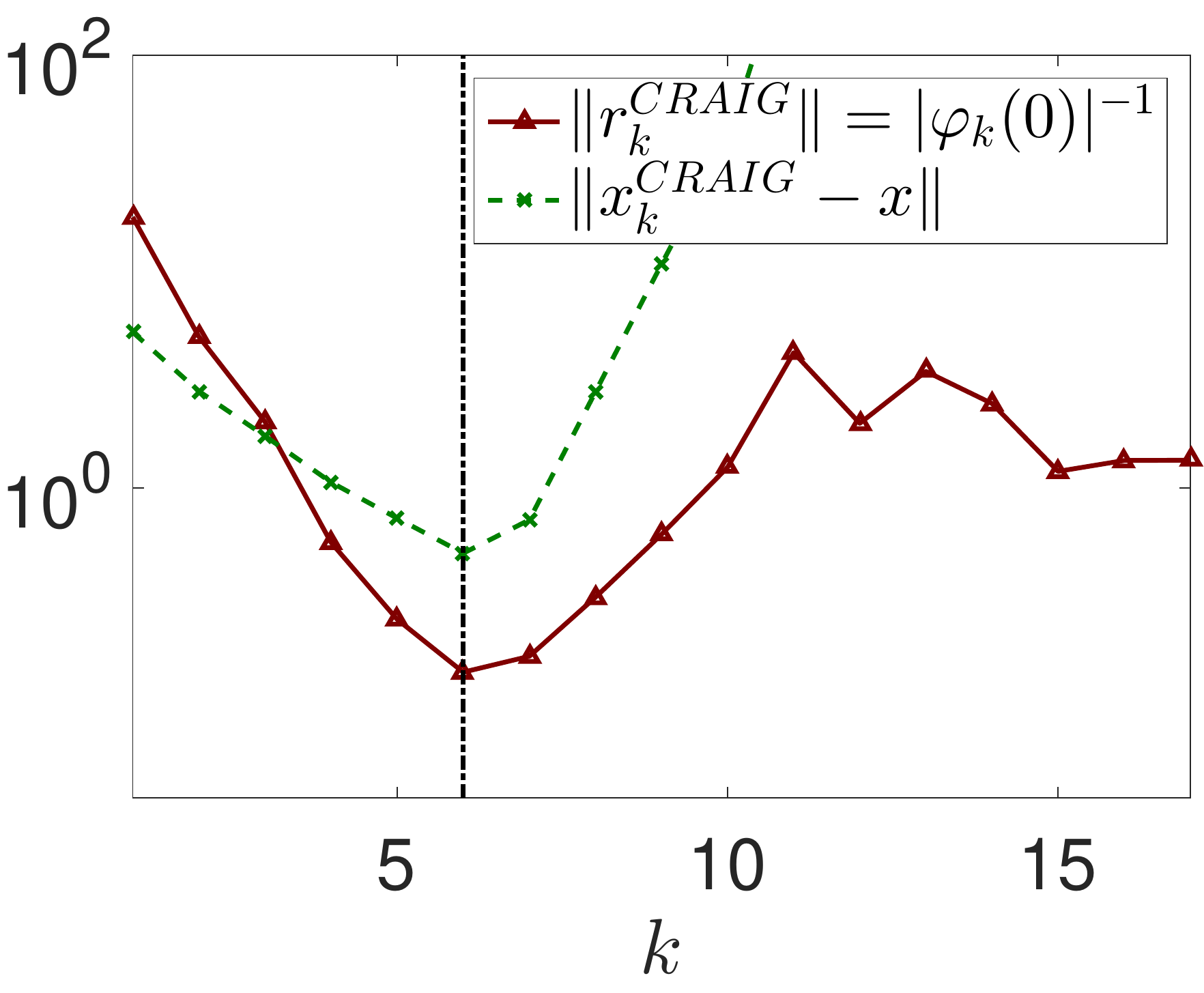}
                \caption{\texttt{gravity(400)},\\ $\delta_\text{noise} = 10^{-3}$, Poisson}
                \end{subfigure}
                 \begin{subfigure}[b]{0.32\textwidth}
               \captionsetup{justification=centering}
                \includegraphics[width=\textwidth]{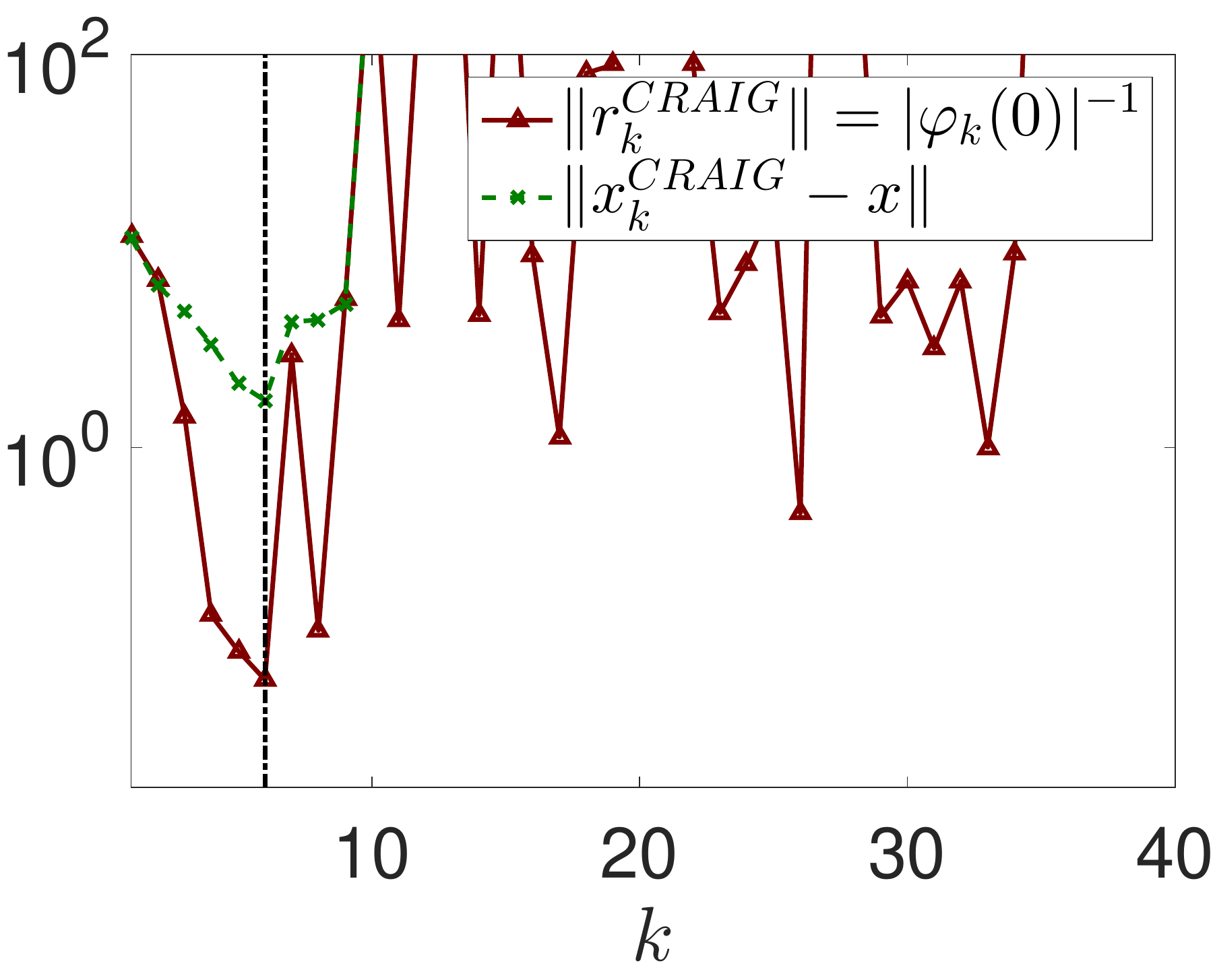}
                \caption{\texttt{shaw(400)},\\ $\delta_\text{noise} = 10^{-3}$, white noise}
        \end{subfigure}
        \begin{subfigure}[b]{0.32\textwidth}
        \captionsetup{justification=centering}
                \includegraphics[width=\textwidth]{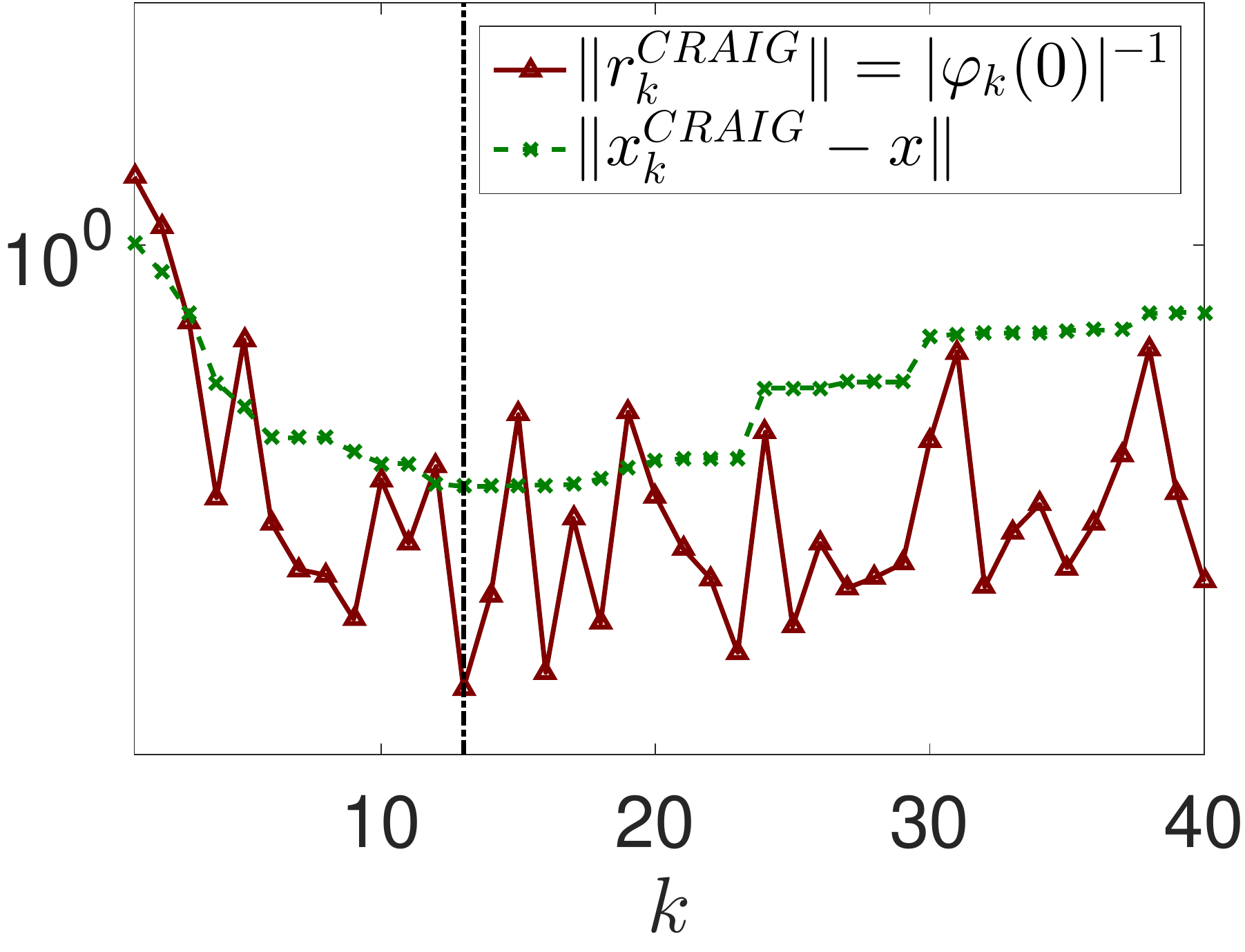}
                \caption{\texttt{phillips(400)},\\ $\delta_\text{noise} = 10^{-5}$, white noise}
        \end{subfigure}
        \begin{subfigure}[b]{0.32\textwidth}
        \captionsetup{justification=centering}
                \includegraphics[width=\textwidth]{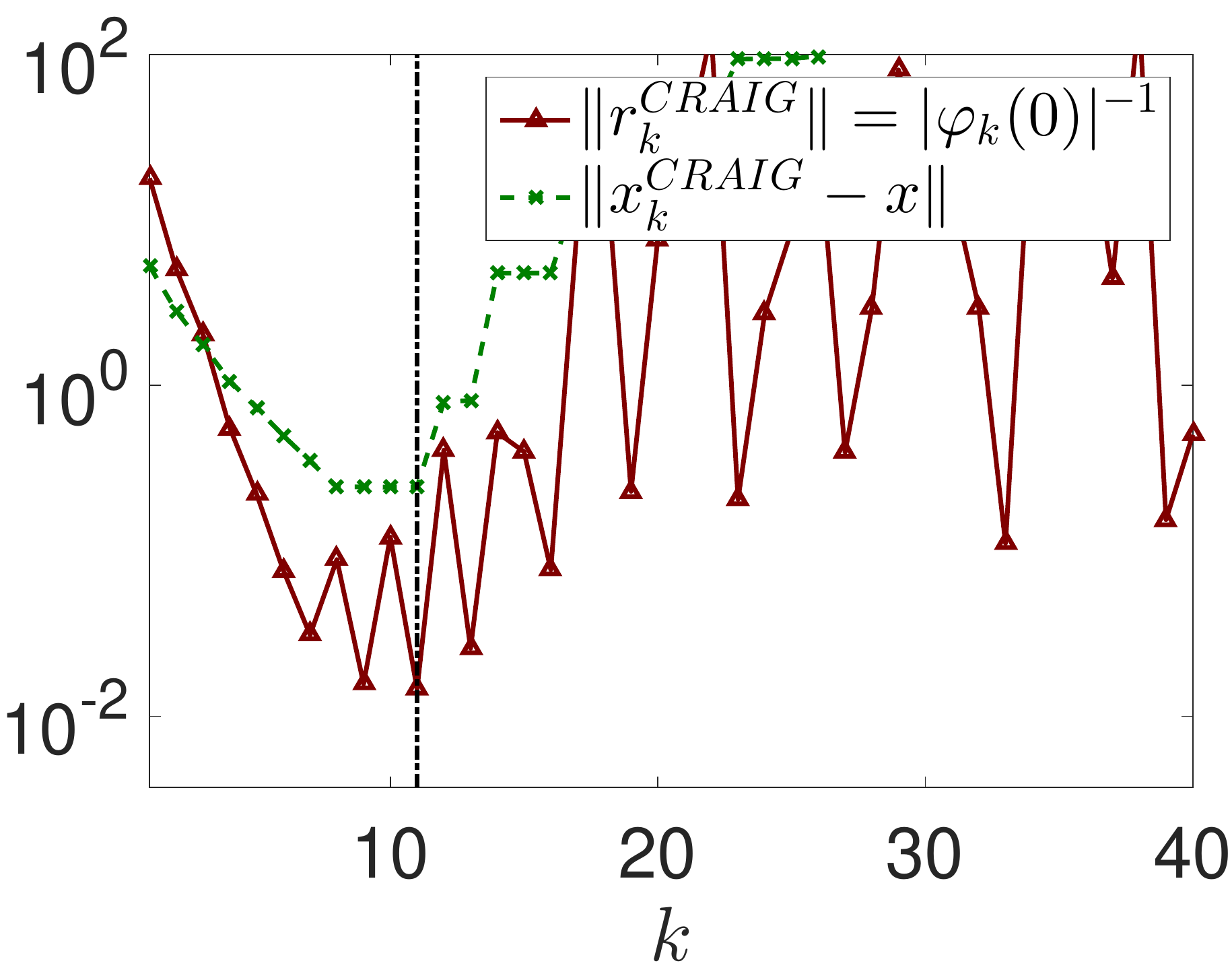}
                \caption{\texttt{gravity(400)},\\ $\delta_\text{noise} = 10^{-4}$, Poisson}
        \end{subfigure}
               \caption{Comparison of the size of the residual and the size of the error in CRAIG for various test problems with various noise characteristics. The minimal error is achieved approximately when the residual is minimized (vertical line). In Figures (g)-(i) without reorthogonalization.}
\label{fig:error_all}
\end{figure}

\subsection{LSQR residuals}\label{sec:residuals}
Whereas for CRAIG, the residual is just a~scaled left bidiagonalization vector, for LSQR  it is a~linear combination of all previously computed left bidiagonalization  vectors. Indeed,
\begin{equation}
r_k^\text{LSQR} = b - AW_ky_k^\text{LSQR} = S_{k+1}\left( \beta_1e_1 - L_{k+}y_k^\text{LSQR}\right),\label{eq:residual_LSQR}
\end{equation}
 see \eqref{eq:residual}. The entries of the residual of the projected problem
\begin{equation}
p_k^\text{LSQR}\equiv  \beta_1e_1 - L_{k+}y_k^\text{LSQR},
\label{eq:pk}
\end{equation}
see \eqref{eq:LSQR_projected},  represent the coefficients of the linear combination in \eqref{eq:residual_LSQR}. The following proposition shows the relation between the coefficients and the amplification factor $\varphi_k(0)$.
\begin{prop}\label{th:2}
Consider the first $k$ steps of the Golub-Kahan iterative bidiagonalization. Let $r_k^\text{LSQR} = b - Ax_k^\text{LSQR}$, where $x_k^\text{LSQR}$ is  the approximation defined in \eqref{eq:projected} and \eqref{eq:LSQR_projected}. Then
\begin{equation}
r_k^\text{LSQR} = \frac{1}{\sum_{l=0}^k\varphi_l(0)^{2}}\sum_{l=0}^k\varphi_l(0)s_{l+1}.\label{eq:residual_lsqr_sum}
\end{equation}
Consequently,
\begin{equation}
\|r_k^\text{LSQR}\| = \frac{1}{\sqrt{\sum_{l=0}^k\varphi_l(0)^{2}}}.
\end{equation}
\end{prop}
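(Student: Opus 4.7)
The plan is to leverage the polynomial representation of the left bidiagonalization vectors from \eqref{eq:lanczos_poly} together with the orthonormality of $s_{1},\ldots,s_{k+1}$ to reduce the LSQR minimization to a single-constraint least-squares problem expressed in the Lanczos basis of $\mathcal{P}_k$.

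First, I would observe that since $x_k^\text{LSQR}\in\mathcal{K}_k(A^TA,A^Tb)$, the LSQR residual can be written as $r_k^\text{LSQR}=P_k(AA^T)b$ for some residual polynomial $P_k\in\mathcal{P}_k$ with $P_k(0)=1$ (this follows from $A\,q(A^TA)A^T=(AA^T)\,q(AA^T)$ applied to $x_k^\text{LSQR}=q(A^TA)A^Tb$). The Lanczos polynomials $\varphi_0,\varphi_1,\ldots,\varphi_k$ appearing in \eqref{eq:lanczos_poly} have strictly increasing degrees under the standing assumption $\alpha_l,\beta_{l+1}>0$, so they form a basis of $\mathcal{P}_k$. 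Expanding $P_k=\sum_{l=0}^{k}c_l\varphi_l$ turns the normalization $P_k(0)=1$ into the scalar constraint $\sum_{l=0}^{k}c_l\varphi_l(0)=1$, while \eqref{eq:lanczos_poly} gives $r_k^\text{LSQR}=\sum_{l=0}^{k}c_l\,s_{l+1}$.

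Next, orthonormality of $s_1,\ldots,s_{k+1}$ yields $\|r_k^\text{LSQR}\|^2=\sum_{l=0}^{k}c_l^2$, so that the LSQR coefficients $c_l$ are determined by minimizing $\sum_{l=0}^{k}c_l^2$ subject to the linear constraint $\sum_{l=0}^{k}c_l\varphi_l(0)=1$. A one-line Cauchy--Schwarz argument, or equivalently a Lagrange multiplier computation, identifies the minimizer as $c_l=\varphi_l(0)/\sum_{j=0}^{k}\varphi_j(0)^2$. Substituting this back into $r_k^\text{LSQR}=\sum_l c_l s_{l+1}$ yields \eqref{eq:residual_lsqr_sum}, and the norm identity follows from $\sum_l c_l^2=1/\sum_{j=0}^{k}\varphi_j(0)^2$.

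I do not foresee a serious obstacle; the only delicate point is the change of basis, which rests on $\deg\varphi_l=l$ guaranteed by the non-breakdown hypothesis. An alternative, entirely equivalent route would sidestep the polynomial viewpoint: combine Proposition~\ref{th:1}, which states $r_l^\text{CRAIG}=\varphi_l(0)^{-1}s_{l+1}$, with the classical Galerkin-versus-minimum-residual identity already invoked for \eqref{eq:res_norm_LSQR} (cf.\ \cite[sec.~6.5.7]{Saad2003Iterative}). Writing $r_k^\text{LSQR}$ as the minimum-norm element of the affine set $\{\sum_{l=0}^{k}c_l\,r_l^\text{CRAIG}:\sum_l c_l=1\}$ and exploiting the mutual orthogonality of the $r_l^\text{CRAIG}$ reproduces the same coefficients and hence both \eqref{eq:residual_lsqr_sum} and the stated norm formula.
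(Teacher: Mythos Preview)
Your argument is correct and, in fact, more self-contained than the paper's. The paper works at the level of the projected least-squares problem: from the normal equations it derives $L_{k+}^{T}p_k^{\text{LSQR}}=0$, whose bidiagonal structure yields the two-term recurrence $\alpha_l(e_l^Tp_k)+\beta_{l+1}(e_{l+1}^Tp_k)=0$, so that the entries of $p_k^{\text{LSQR}}$ are proportional to $\varphi_l(0)$. The scaling constant is then fixed \emph{externally}, by combining Proposition~\ref{th:1} (which gives $\|r_l^{\text{CRAIG}}\|=|\varphi_l(0)|^{-1}$) with the Galerkin--minimum-residual identity \eqref{eq:res_norm_LSQR}. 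Your route instead reformulates LSQR as minimization over residual polynomials $P\in\mathcal{P}_k$ with $P(0)=1$, expands in the Lanczos basis, and solves the resulting one-constraint quadratic program by Cauchy--Schwarz. This delivers both the coefficients and the norm formula in one stroke, without invoking Proposition~\ref{th:1} or \eqref{eq:res_norm_LSQR}. The paper's matrix-analytic derivation has the advantage of staying close to the computed quantities $p_k^{\text{LSQR}}$ and $L_{k+}$, and of reusing the CRAIG result already in hand; your polynomial argument is more elementary and makes the role of the constraint $P_k(0)=1$ transparent. The alternative you sketch at the end---writing $r_k^{\text{LSQR}}$ as the minimum-norm convex combination of the mutually orthogonal CRAIG residuals---is essentially the paper's strategy recast in a basis-free form.
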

\begin{proof}
Since
\begin{equation}
y_k^\text{LSQR} = \underset{y}{\operatorname{argmin}}\|\beta_1e_1 - L_{k+}y\|,
\end{equation}
we get
\begin{equation}
L_{k+}^Tp_k^\text{LSQR} = 0.
\end{equation}
It follows from the structure of the matrix $L_{k+}$ that the entries of $p_k^\text{LSQR}$ satisfy
\begin{equation}
\alpha_le_l^Tp_k^\text{LSQR} + \beta_{l+1}e_{l+1}^Tp_k^\text{LSQR} = 0, \qquad \text{for} \qquad l = 1,\ldots,k.
\end{equation}
Thus
\begin{equation}
p_k^\text{LSQR} = c_k\left[
\begin{array}{c}
\varphi_0(0)\\
\varphi_1(0)\\
\vdots\\
\varphi_k(0)
\end{array}
\right],\label{eq:pk_k_with_ck}
\end{equation}
where $c_k$ is a~factor that changes with $k$. From \eqref{eq:res_norm_LSQR} and \eqref{eq:craig_res} it follows  that
\begin{equation}
\|p_k^\text{LSQR}\| = \|r_k^\text{LSQR}\| = \frac{1}{\sqrt{\sum_{l=0}^k\varphi_l(0)^{2}}}.\label{eq:pk_norm}
\end{equation}
By comparing \eqref{eq:pk_k_with_ck} and \eqref{eq:pk_norm},
we  get
\begin{equation}
c_k = \frac{1}{\sum_{l=0}^k\varphi_l(0)^{2}},\label{eq:ck}
\end{equation}
which together with \eqref{eq:residual_LSQR} and \eqref{eq:pk} yields \eqref{eq:residual_lsqr_sum}.\qed
\end{proof}

In other words, Proposition~\ref{th:2} says that the coefficients of the linear combination \eqref{eq:residual_LSQR} follow the behavior of the amplification factor  in the sense that representation of a~particular left bidiagonalization vector $s_{l+1}$ in the residual $r_k^\text{LSQR}$, $k\geq l$, is proportional to the amount of propagated non-smoothed part of noise $\eta$ in this vector.

\medskip
Relation \eqref{eq:residual_lsqr_sum} also suggests that the norm-minimizing process (LSQR) and the corresponding Galerkin process (CRAIG) provide similar solutions whenever
\begin{equation}
\frac{\varphi_k(0)^{2}}{\sum_{l=0}^{k}\varphi_l(0)^{2}}\approx 1,
\end{equation}
i.e., whenever the noise revealing in the last left bidiagonalization vector $s_{k+1}$ is much more significant than in all previous left bidiagonalization vectors $s_1,\ldots s_k$, i.e., typically before we reach the noise revealing iteration. This is confirmed numerically in Figure~\ref{fig:error_craig_lsqr}, comparing the semiconvergence curves of CRAIG and LSQR.
\begin{figure}
        \centering
        \begin{subfigure}[b]{0.32\textwidth}
        \captionsetup{justification=centering}
                \includegraphics[width=\textwidth]{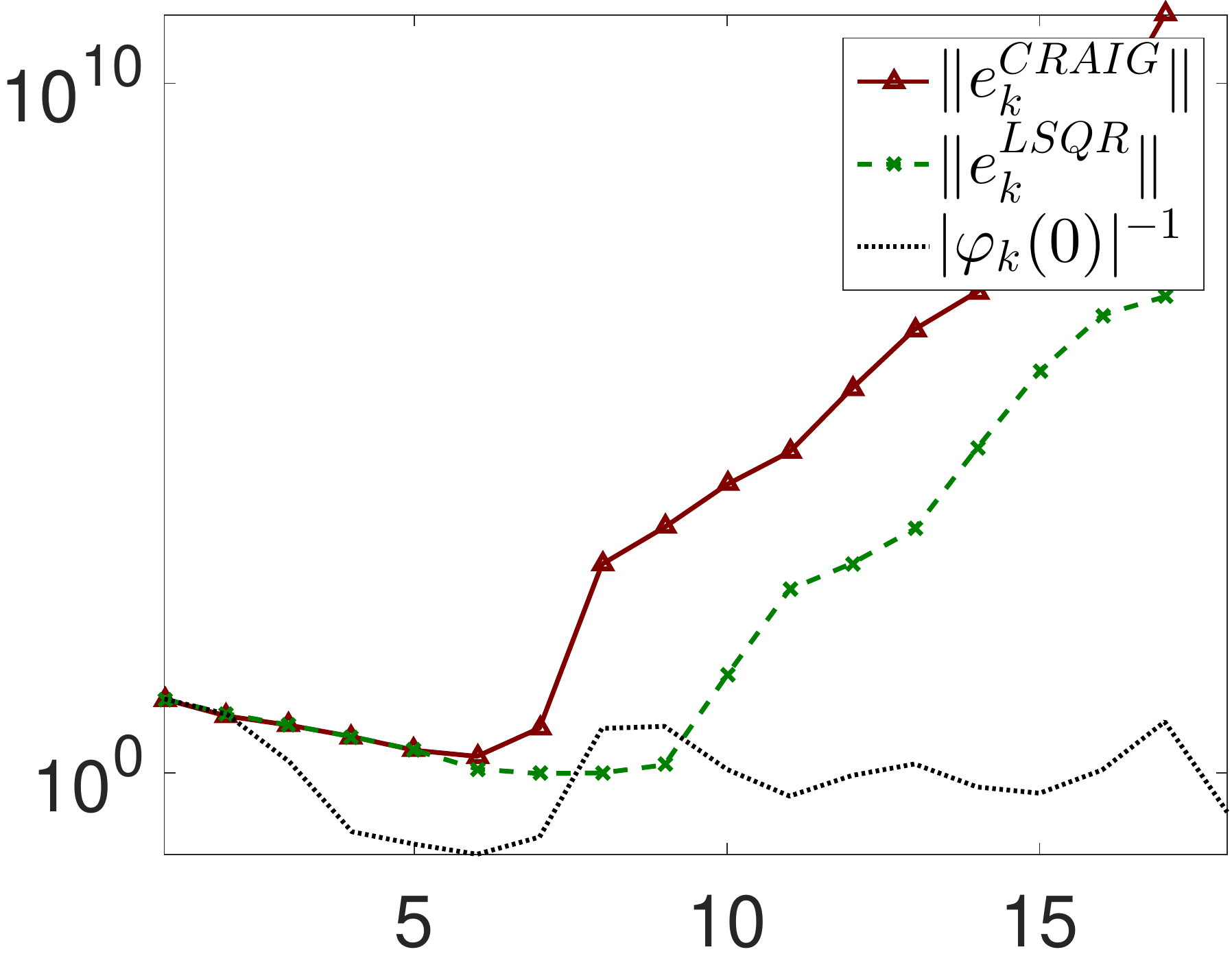}
                \caption{\texttt{shaw(400)},\\ $\delta_\text{noise} = 10^{-3}$, white noise}
        \end{subfigure}
        \begin{subfigure}[b]{0.32\textwidth}
        \captionsetup{justification=centering}
                \includegraphics[width=\textwidth]{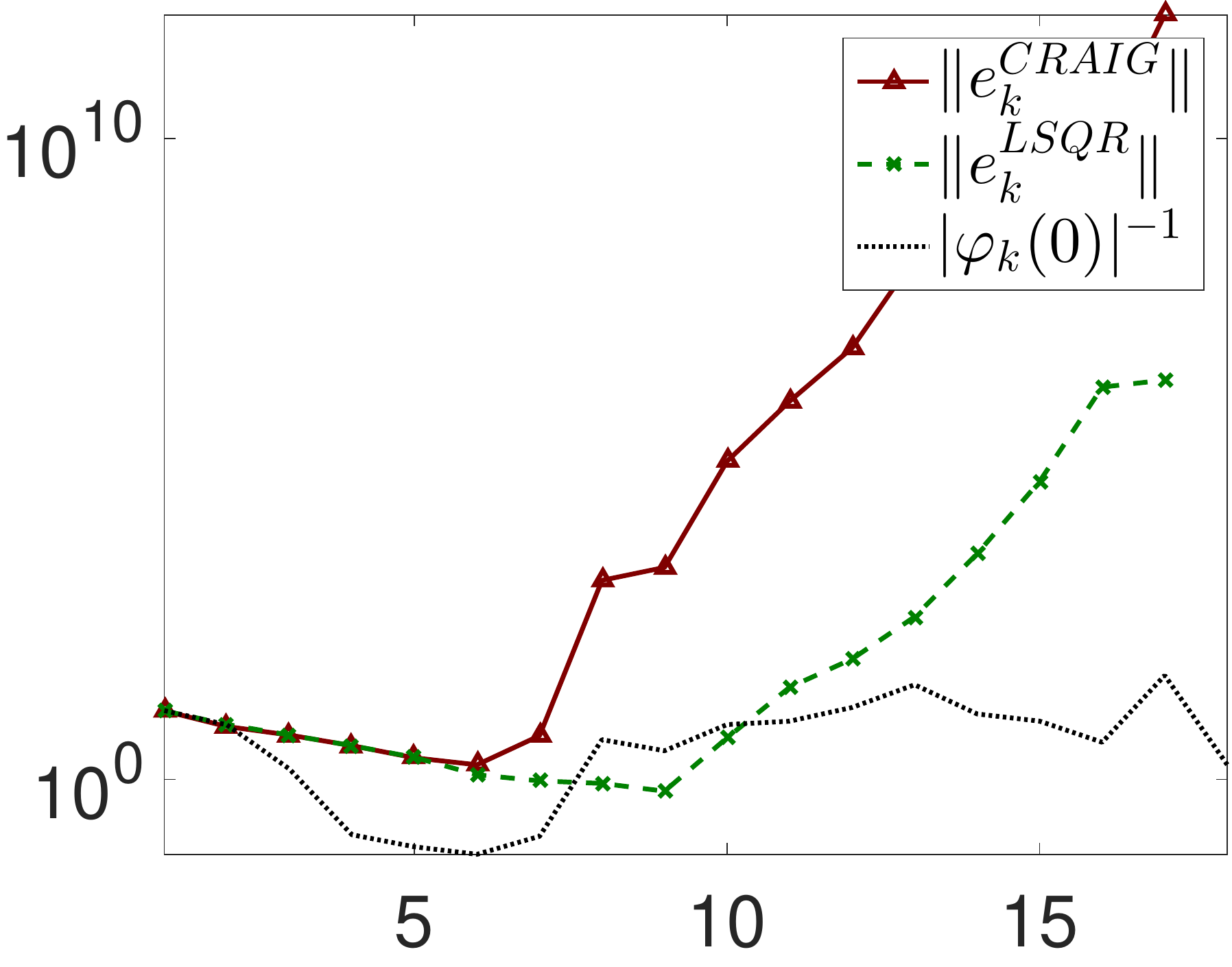}
                \caption{\texttt{shaw(400)}, $\delta_\text{noise} = 10^{-3}$, violet noise}
        \end{subfigure}
        \begin{subfigure}[b]{0.32\textwidth}
        \captionsetup{justification=centering}
                \includegraphics[width=\textwidth]{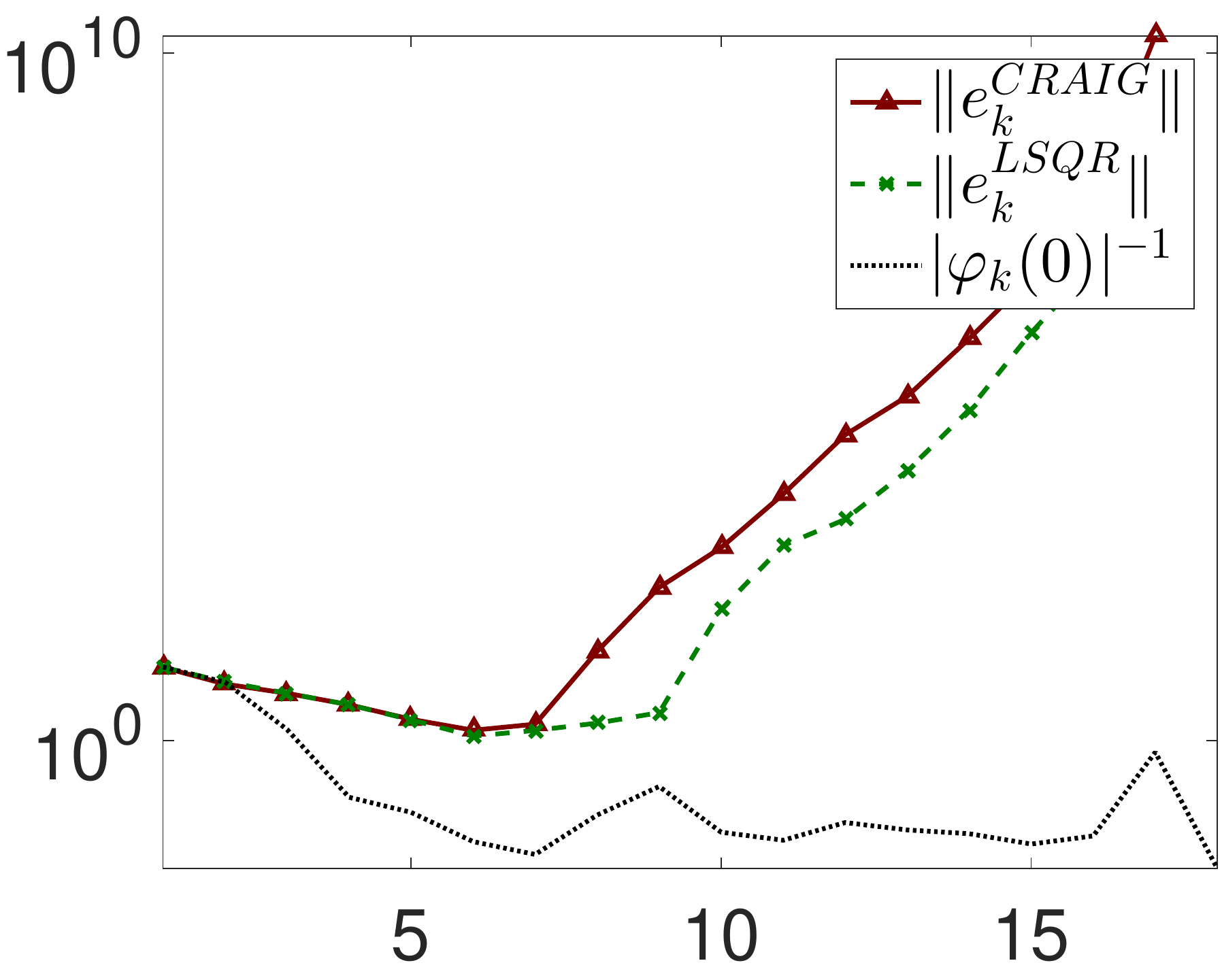}
                \caption{\texttt{shaw(400)}, $\delta_\text{noise} = 10^{-3}$, red noise}
        \end{subfigure}

        \vspace*{.3cm}

        \begin{subfigure}[b]{0.32\textwidth}
        \captionsetup{justification=centering}
                \includegraphics[width=\textwidth]{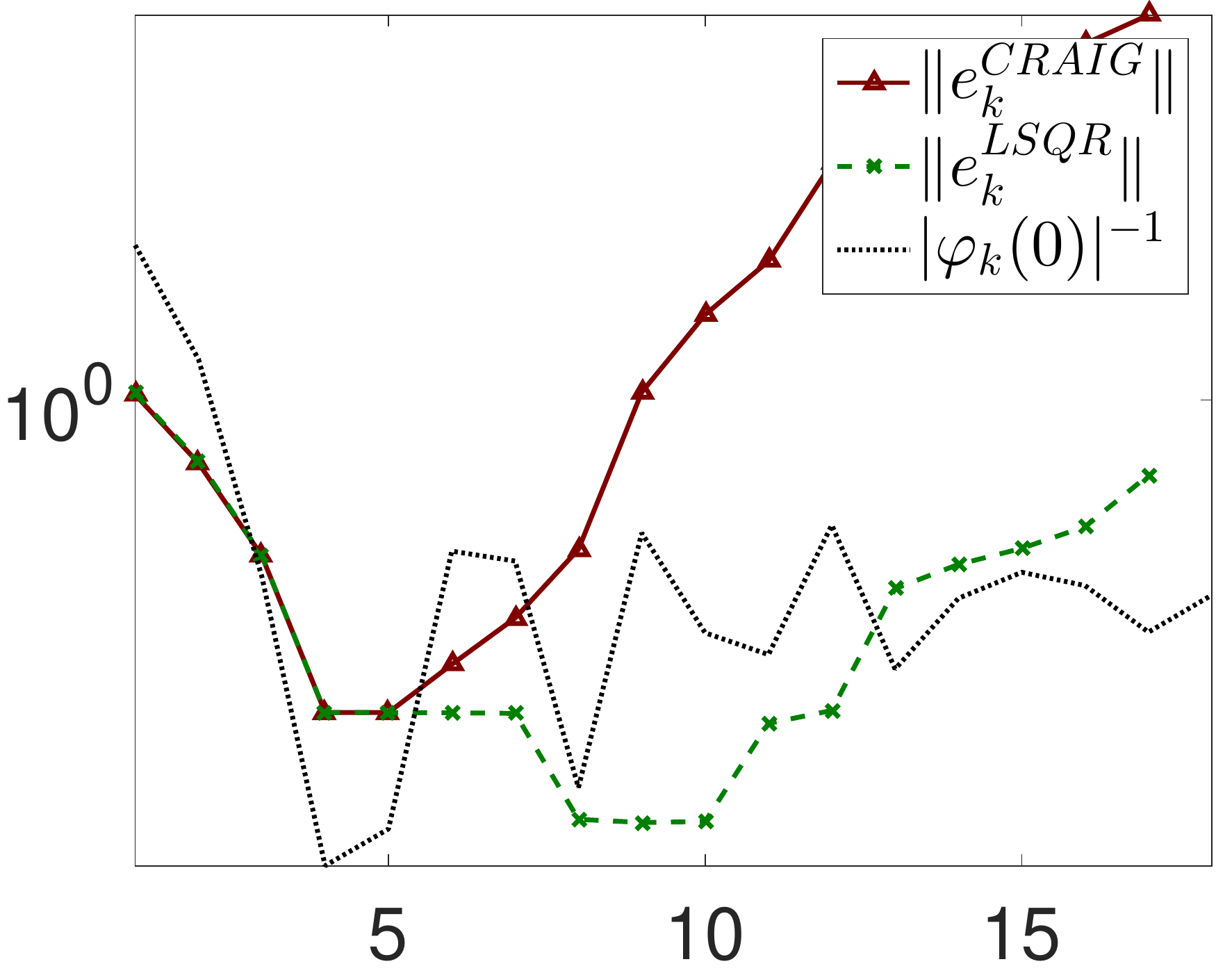}
                \caption{\texttt{phillips(400)}, $\delta_\text{noise} = 10^{-3}$, white noise}
        \end{subfigure}
        \begin{subfigure}[b]{0.32\textwidth}
        \captionsetup{justification=centering}
                \includegraphics[width=\textwidth]{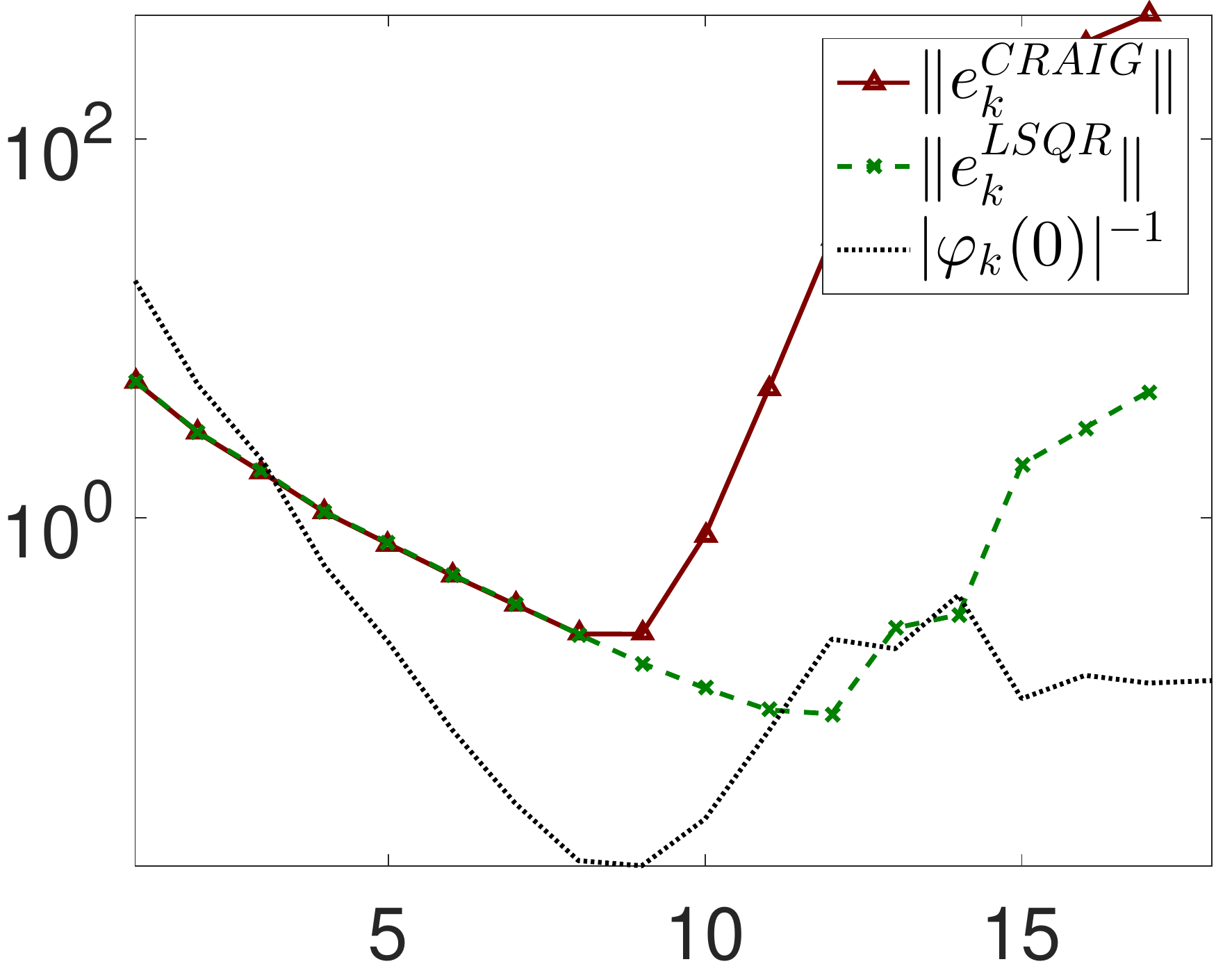}
                \caption{\texttt{gravity(400)}, $\delta_\text{noise} = 10^{-4}$, Poisson}
        \end{subfigure}
        \begin{subfigure}[b]{0.32\textwidth}
        \captionsetup{justification=centering}
                \includegraphics[width=\textwidth]{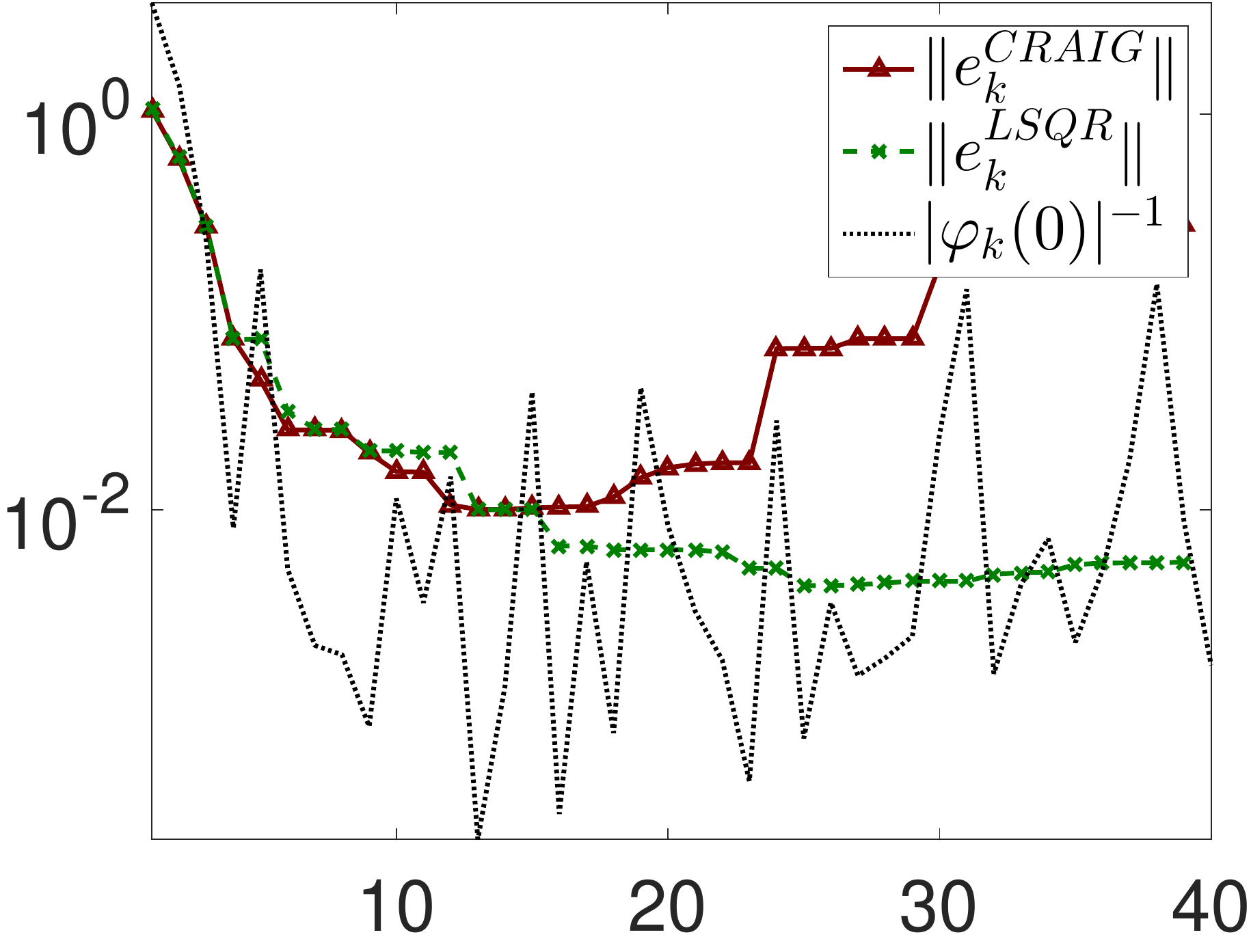}
                \caption{\texttt{phillips(400)},\\ $\delta_\text{noise} = 10^{-5}$, white noise}
                \end{subfigure}
               \caption{The size of the error of LSQR and CRAIG in comparison with the inverse of the amplification factor for various test problems with various noise characteristics. The semiconvergence curves exhibit similar behavior until the noise revealing iteration. In Figure (f) without reorthogonalization.}
\label{fig:error_craig_lsqr}
\end{figure}

\subsection{LSMR residuals}\label{ssec:LSMRres}
Before we investigate the residual of LSMR with respect to the basis $S_k$, we should understand how it is related to the residual of LSQR. It follows from Table~\ref{tab:methods_ne} that the relation between $A^Tr_k^\text{LSMR}$ and  $A^Tr_k^\text{LSQR}$ is analogous to the relation between $r_k^\text{CRAIG}$ and  $r_k^\text{LSQR}$. Using Proposition \ref{th:1} and \ref{th:2}, with $\varphi_k$ substituted by $\psi_k$ and $s_k$ substituted by $w_k$, we obtain
\begin{equation}
A^Tr_k^\text{LSQR} = \psi_k(0)^{-1}w_{k+1},
\end{equation}
and 
\begin{equation}
A^Tr_k^\text{LSMR} = \frac{1}{\sum_{l=0}^k\psi_l(0)^{2}}\sum_{l=0}^k\psi_l(0)w_{l+1}.
\end{equation}
Since
\begin{equation}
A^Tr_k^\text{LSMR} = W_{k+1}L_{k+1}^Tp_k^\text{LSMR},
\end{equation}
we obtain that
\begin{align}
\\
L_{k+1}^Tp_k^\text{LSMR} &= \frac{1}{\sum_{l=0}^k\psi_l(0)^{2}}\left[
\begin{array}{c}
\psi_0(0)\\
\psi_1(0)\\
\vdots\\
\psi_k(0)
\end{array}
\right].\label{eq:Lkpk}
\end{align}
This equality however does not provide the desired relationship between the residuals $r_k^\text{LSMR}$ themselves and the left bidiagonalization vectors $s_1,\ldots,s_{k+1}$. This is given in the following proposition.

\begin{prop}\label{th:3}
Consider the first $k$ steps of the Golub-Kahan iterative bidiagonalization. Let $r_k^\text{LSMR} = b - Ax_k^\text{LSMR}$, where $x_k^\text{LSMR}$ is  the approximation defined in \eqref{eq:projected} and \eqref{eq:LSMR_projected}. Then
\begin{equation}
r_k^\text{LSMR} = \frac{1}{\sum_{l=0}^k\psi_l(0)^2}
\sum_{l = 0}^k \left(\varphi_l(0)\sum_{j=l}^k\alpha_{j+1}^{-1}\varphi_j(0)^{-1}\psi_j(0)\right) s_{l+1}.
\label{eq:residual_lsmr_sum}
\end{equation}
\end{prop}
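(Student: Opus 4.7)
The plan is to use the already-derived relation \eqref{eq:Lkpk} and determine the coefficient vector $p_k^\text{LSMR}$ explicitly, so that the identity $r_k^\text{LSMR}=S_{k+1}p_k^\text{LSMR}$ (cf.~\eqref{eq:residual}, \eqref{eq:pk}) immediately yields \eqref{eq:residual_lsmr_sum}. Since $L_{k+1}^T$ is upper bidiagonal, writing $p_k^\text{LSMR}=[q_1,\ldots,q_{k+1}]^T$ and $D\equiv\sum_{l=0}^k\psi_l(0)^2$, equation \eqref{eq:Lkpk} becomes the two-term backward recurrence
\[
\alpha_{k+1}q_{k+1}=D^{-1}\psi_k(0),\qquad \alpha_{l+1}q_{l+1}+\beta_{l+2}q_{l+2}=D^{-1}\psi_l(0),\quad l=0,\ldots,k-1,
\]
so the whole task reduces to solving this system in closed form.

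To do this I would first extract from \eqref{eq:ampl_factor} the identity $\alpha_{l+1}\varphi_l(0)+\beta_{l+2}\varphi_{l+1}(0)=0$ for $l\geq0$, which is the Lanczos three-term recurrence for the $\varphi$-polynomials evaluated at zero. Guided by it, I propose the ansatz
\[
q_{l+1}=\frac{\varphi_l(0)}{D}\sum_{j=l}^k\alpha_{j+1}^{-1}\varphi_j(0)^{-1}\psi_j(0),
\]
which is precisely the coefficient of $s_{l+1}$ claimed in \eqref{eq:residual_lsmr_sum}. The base case $l=k$ is immediate: the inner sum collapses to a single term and gives $q_{k+1}=D^{-1}\alpha_{k+1}^{-1}\psi_k(0)$. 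For $l<k$, denote $T_m\equiv\sum_{j=m}^k\alpha_{j+1}^{-1}\varphi_j(0)^{-1}\psi_j(0)$ and substitute the ansatz into $\alpha_{l+1}q_{l+1}+\beta_{l+2}q_{l+2}$; using $\beta_{l+2}\varphi_{l+1}(0)=-\alpha_{l+1}\varphi_l(0)$ the two contributions combine to $D^{-1}\alpha_{l+1}\varphi_l(0)(T_l-T_{l+1})$, and $T_l-T_{l+1}=\alpha_{l+1}^{-1}\varphi_l(0)^{-1}\psi_l(0)$ telescopes the whole expression to $D^{-1}\psi_l(0)$, matching the recurrence.

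The principal obstacle is spotting the closed-form ansatz: the products of $\alpha$'s and $\beta$'s buried inside $\varphi_l(0)$ must cancel exactly against those from the recurrence, and this becomes transparent only after recognising the three-term identity for $\varphi_l(0)$ as the engine driving the telescoping. A safer but less illuminating alternative would be a downward induction on $l$ that uses the recurrence as the inductive step, avoiding any guessing; the manipulations are essentially the same but the structural reason behind the formula is hidden.
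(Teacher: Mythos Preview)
Your argument is correct and follows essentially the same route as the paper: both start from \eqref{eq:Lkpk} and solve the upper-bidiagonal system $L_{k+1}^Tp_k^{\text{LSMR}}=D^{-1}(\psi_0(0),\ldots,\psi_k(0))^T$ using the identity $\beta_{l+2}\varphi_{l+1}(0)=-\alpha_{l+1}\varphi_l(0)$ hidden in \eqref{eq:ampl_factor}. The only cosmetic difference is that the paper writes down the closed form of $L_{k+1}^{-T}$ (namely $e_i^TL_{k+1}^{-T}e_j=\varphi_{i-1}(0)/(\alpha_j\varphi_{j-1}(0))$) and multiplies, whereas you verify the same formula by back-substitution and telescoping; the underlying computation is identical.
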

\begin{proof}
From \eqref{eq:Lkpk} it follows that
\begin{equation}
p_k^{\text{LSMR}}=\frac{1}{\sum_{l=0}^k\psi_l(0)^2}\,L_{k+1}^{-T}\left[\begin{array}{c}\psi_0(0)\\\psi_1(0)\\\vdots\\\psi_k(0)\end{array}\right],
\end{equation}
where $L_{k+1}^{-T}$ is an upper triangular matrix with entries
\begin{equation}
  e_i^TL_{k+1}^{-T}e_j
  = \left\{\begin{array}{cc}
    \frac{\displaystyle 1}{\displaystyle\alpha_j} & \text{(if $i=j$)} \\[2mm]
    (-1)^{i-j}\,\frac{\displaystyle\beta_{i+1}\cdots\beta_j}{\displaystyle\alpha_i\cdots\alpha_j} & \text{(if $i<j$)}
  \end{array}\right\}
  = \frac{\varphi_{i-1}(0)}{\alpha_j\,\varphi_{j-1}(0)}\,.
\end{equation}
Thus
\begin{equation}
  \begin{split}
  p_k^{\text{LSMR}}
  = \frac{1}{\sum_{l=0}^k\psi_l(0)^2}\,
  \mathrm{triu}\left(
  \left[\begin{array}{c}
    \varphi_0(0)\\
    \varphi_1(0)\\
    \vdots\\
    \varphi_k(0)
  \end{array}\right]
  \left[\begin{array}{c}
    \varphi_0(0)^{-1}\\
    \varphi_1(0)^{-1}\\
    \vdots\\
    \varphi_k(0)^{-1}
  \end{array}\right]^T
  \right)
  \left[\begin{array}{c}
    \alpha_1^{-1}\psi_0(0)\\
    \alpha_2^{-1}\psi_1(0)\\
    \vdots\\
    \alpha_{k+1}^{-1}\psi_k(0)
  \end{array}\right], \\
  \end{split}
\end{equation}
where $\mathrm{triu}(\cdot)$ extracts the upper triangular part of the matrix. Multiplying out, we obtain
\begin{equation}
  p_k^{\text{LSMR}}  = \frac{1}{\sum_{l=0}^k\psi_l(0)^2}
  \left[\begin{array}{c}
    \varphi_0(0)\sum_{l=0}^k\alpha_{l+1}^{-1}\varphi_l(0)^{-1}\psi_l(0)\\
    \varphi_1(0)\sum_{l=1}^k\alpha_{l+1}^{-1}\varphi_l(0)^{-1}\psi_l(0)\\
    \vdots\\
    \varphi_{k-1}(0)\sum_{l=k-1}^k\alpha_{l+1}^{-1}\varphi_l(0)^{-1}\psi_l(0)\\
    \varphi_k(0)\qquad\;\alpha_{k+1}^{-1}\varphi_k(0)^{-1}\psi_k(0)
  \end{array}\right].
  \end{equation}\label{eq:pkcoef}
\qed
\end{proof}

Here the sizes of coefficients in $p_k^{\text{LSMR}}$ need careful discussion. From \eqref{eq:ampl_factor} and \eqref{eq:abspsi} it
follows that the absolute terms of the Lanczos polynomials $\varphi_l(0)$ and $\psi_l(0)$ have the same sign. Thus we have
\begin{equation}
\alpha_{l+1}^{-1}\varphi_l(0)^{-1}\psi_l(0) >0, \quad \forall  l=0, 1, \dots
\end{equation}
and therefore the sum
\begin{equation}
\sum_{l=j}^k\alpha_{l+1}^{-1}\varphi_l(0)^{-1}\psi_l(0) \label{eq:LSQRsum}
\end{equation}
decreases when $j$ increases. 
Furthermore,  it was shown in \cite[sec. 3.2]{Hnetynkova2009regularizing} that for $j<k_\text{rev}$
\begin{equation}
\alpha_l \approx \beta_l.
\end{equation}
Thus \eqref{eq:ampl_factor} yields
\[
\sum_{l=j}^k\alpha_{l+1}^{-1}\varphi_l(0)^{-1}\psi_l(0) \approx \sum_{l=j}^k \psi_l(0).
\]
However, since $|\varphi_j(0)|$ on average increases rapidly with $j$ (see Section~\ref{sec:behavior}), 
the sizes of the entries of $p_k^\text{LSMR}$ in \eqref{eq:pkcoef} generally increase with $l$ before $k_\text{rev}$.
After $j$ reaches the noise revealing iteration $k_\text{rev}$, $|\varphi_j(0)|$ decreases at least for one but typically for more subsequent iterations;
see Section~\ref{sec:behavior}. Multiplication by the decreasing \eqref{eq:LSQRsum} causes that the size of the entries in \eqref{eq:pkcoef} can be expected to decrease after $k_\text{rev}$.

From the previous we conclude that the behavior of the entries of $p_k^\text{LSMR}$ resembles the behavior of $\varphi_l(0)$, i.e., the size of a particular entry is proportional to the amount of propagated noise in the corresponding bidiagonalization vector, similarly as in the LSQR method. Figure~\ref{fig:pkLSQR_MR} compares the entries of $p_k^\text{LSMR}$ with appropriately re-scaled amplification factor $\varphi_k(0)$ on the problem \texttt{shaw} with white noise. We see that the difference is negligible an therefore the residuals for LSQR and LSMR resemble. In early iterations, the resemblance of the residuals indicates resemblance of the solutions since the remaining perturbation only contains low frequencies, which are not amplified by $A^{\dagger}$.
\begin{figure}
\centering
\includegraphics[width = .32\textwidth]{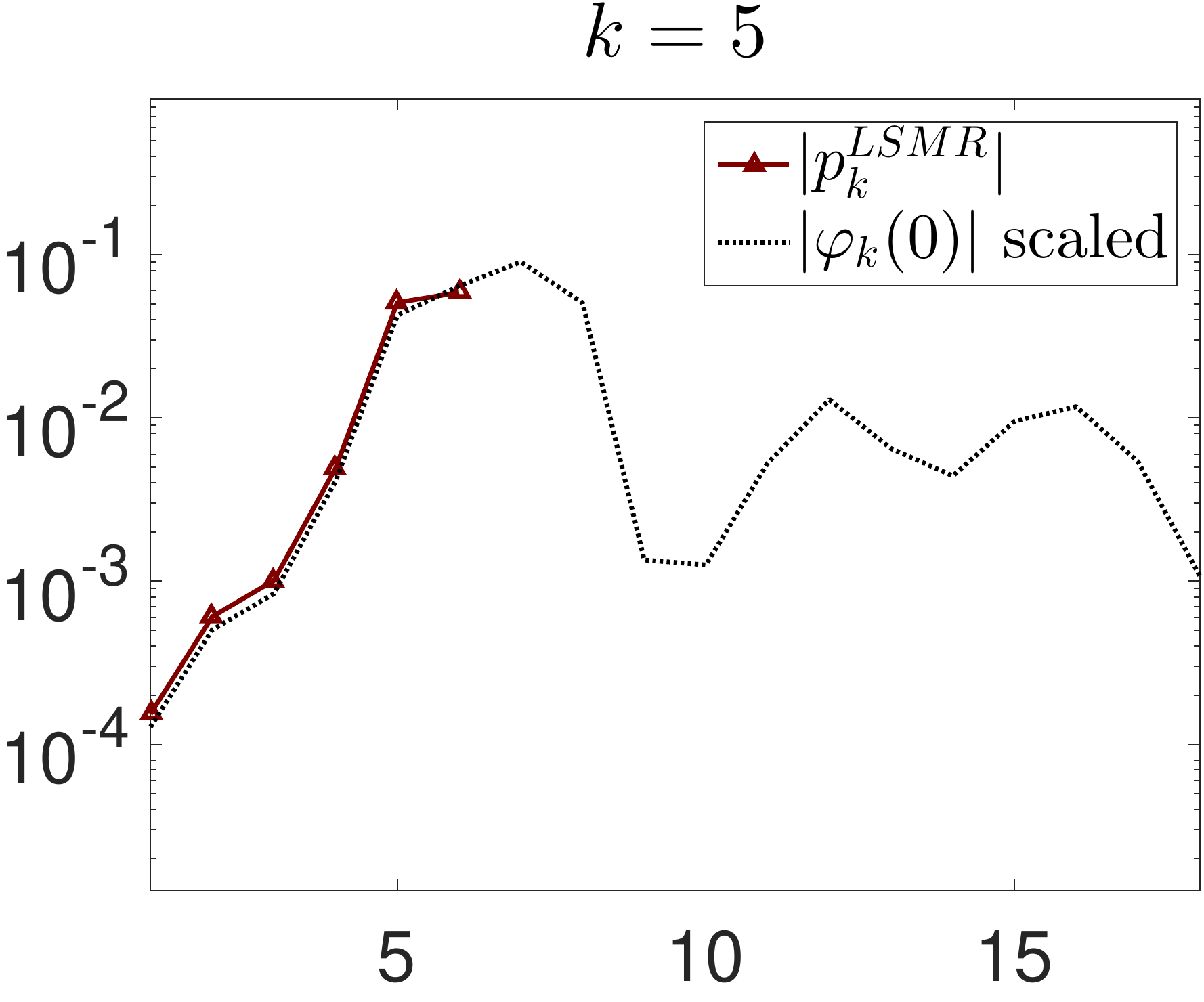}
\includegraphics[width = .32\textwidth]{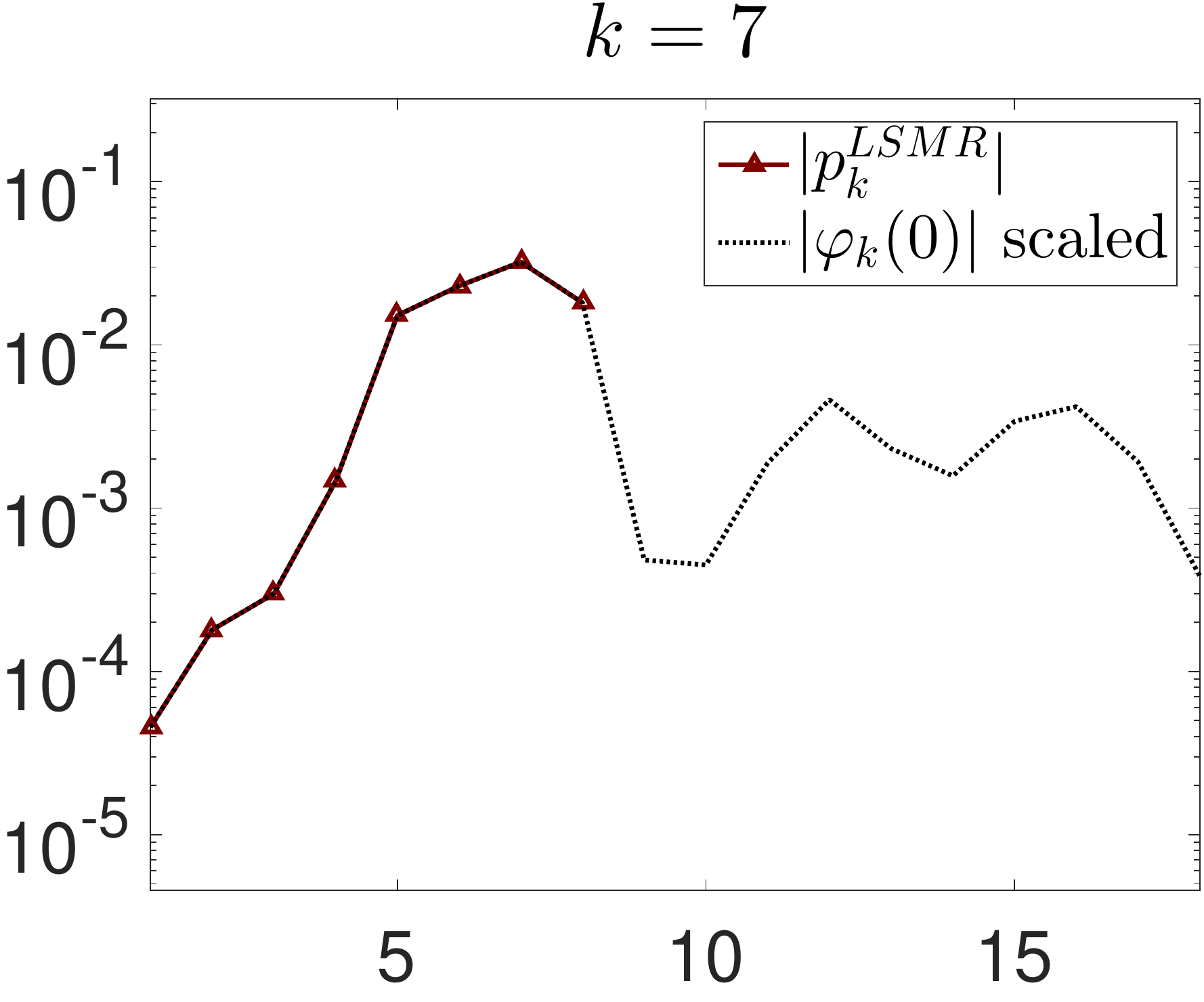}
\includegraphics[width = .32\textwidth]{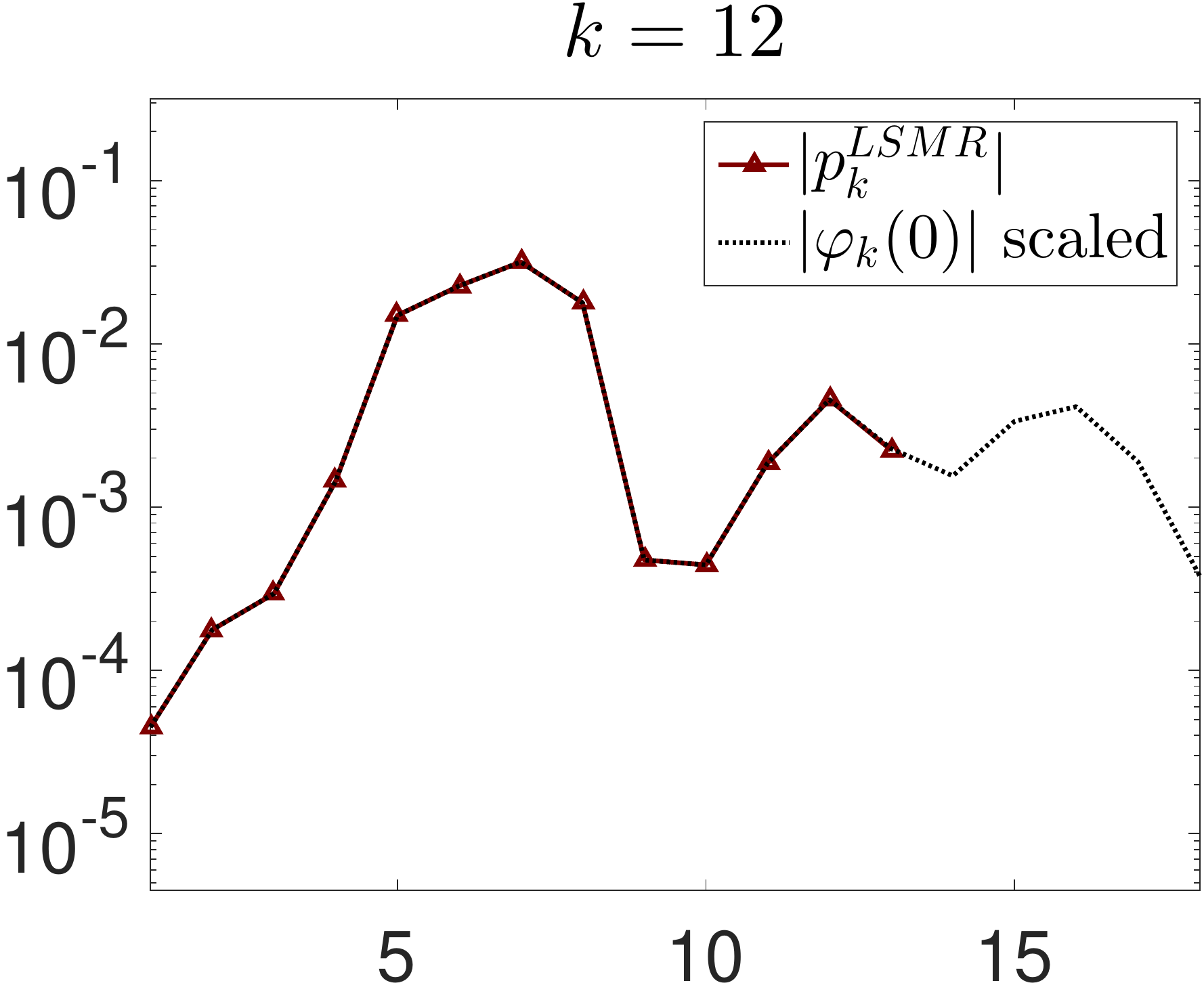}
\caption{The componets of $p_k^\text{LSQR}$ vs. the size of the amplification factor $\varphi_k(0)$ (after scaling) for several values of $k$ for the problem \texttt{shaw} with white noise, $\delta_\text{noise}=10^{-3}$. The differences are negligible.}\label{fig:pkLSQR_MR}
\end{figure}

Note also that since $\psi_k(0)$ grows rapidly on average, see Figure \ref{fig:psi0} in Section~\ref{sec:behavior}, we may expect
\begin{equation}
\frac{\psi_k(0)^{2}}{\sum_{l=0}^{k}\psi_l(0)^{2}}\approx 1.
\end{equation}
Therefore $A^Tr_k^\text{LSMR}$ resembles $A^Tr_k^\text{LSQR}$ giving another explanation why
LSMR and LSQR behave similarly for inverse problems with a smoothing operator $A$, see Figure~\ref{fig:error_lsqr_lsmr} for a comparison on several test problems. 
\begin{figure}
        \centering
        \begin{subfigure}[b]{0.32\textwidth}
        \captionsetup{justification=centering}
                \includegraphics[width=\textwidth]{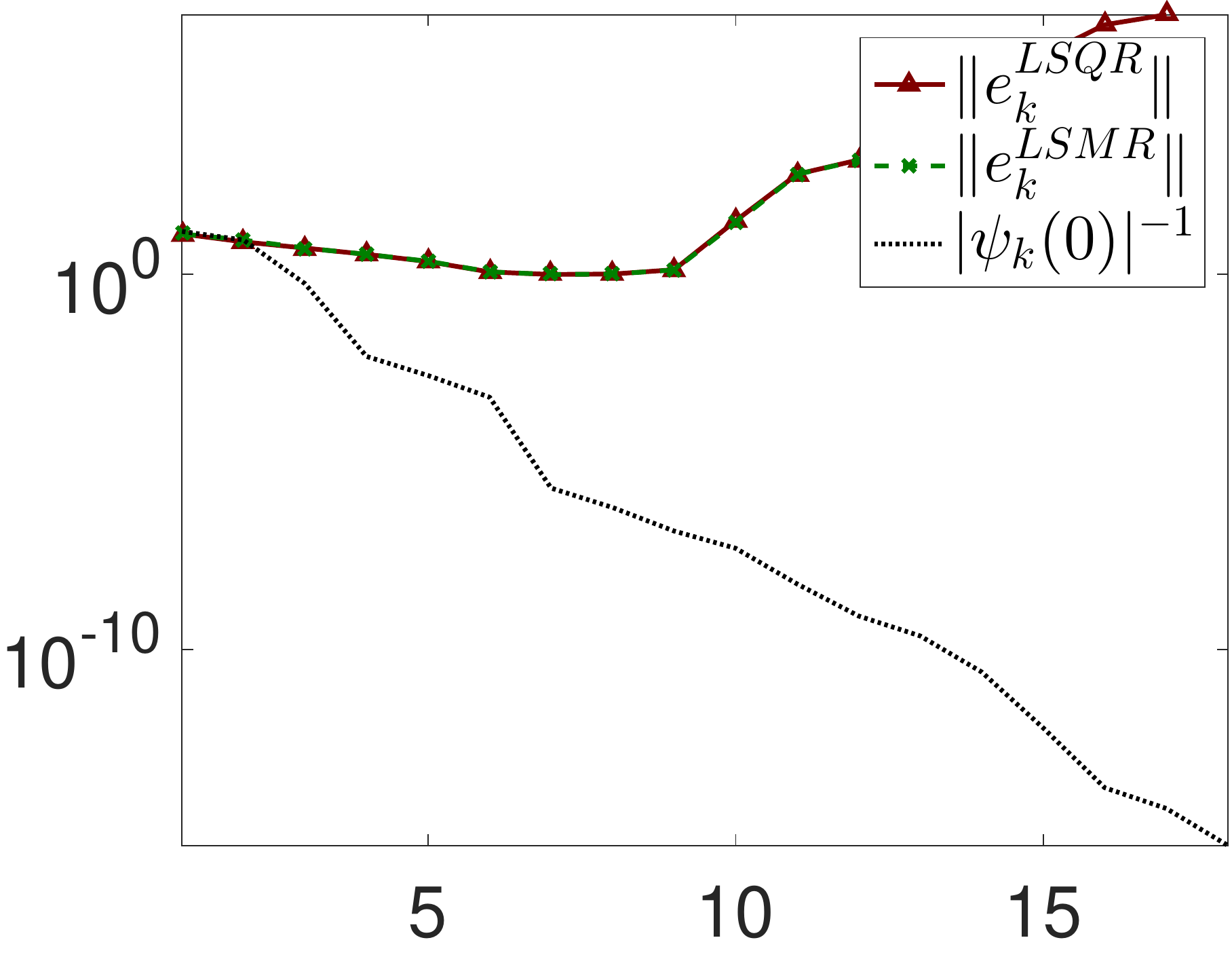}
                \caption{\texttt{shaw(400)},\\ $\delta_\text{noise} = 10^{-3}$, white noise}
        \end{subfigure}
        \begin{subfigure}[b]{0.32\textwidth}
        \captionsetup{justification=centering}
                \includegraphics[width=\textwidth]{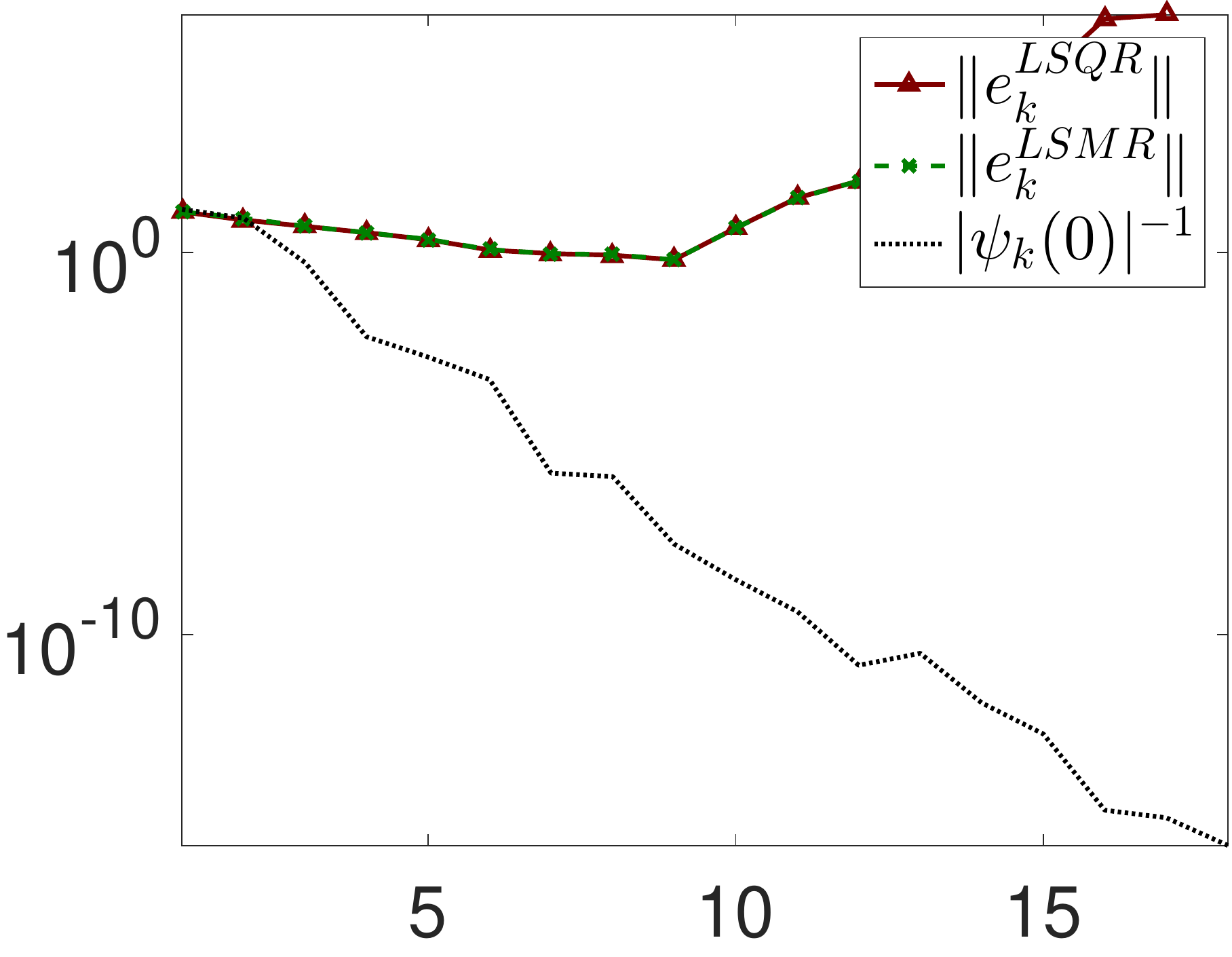}
                \caption{\texttt{shaw(400)}, $\delta_\text{noise} = 10^{-3}$, violet noise}
        \end{subfigure}
        \begin{subfigure}[b]{0.32\textwidth}
        \captionsetup{justification=centering}
                \includegraphics[width=\textwidth]{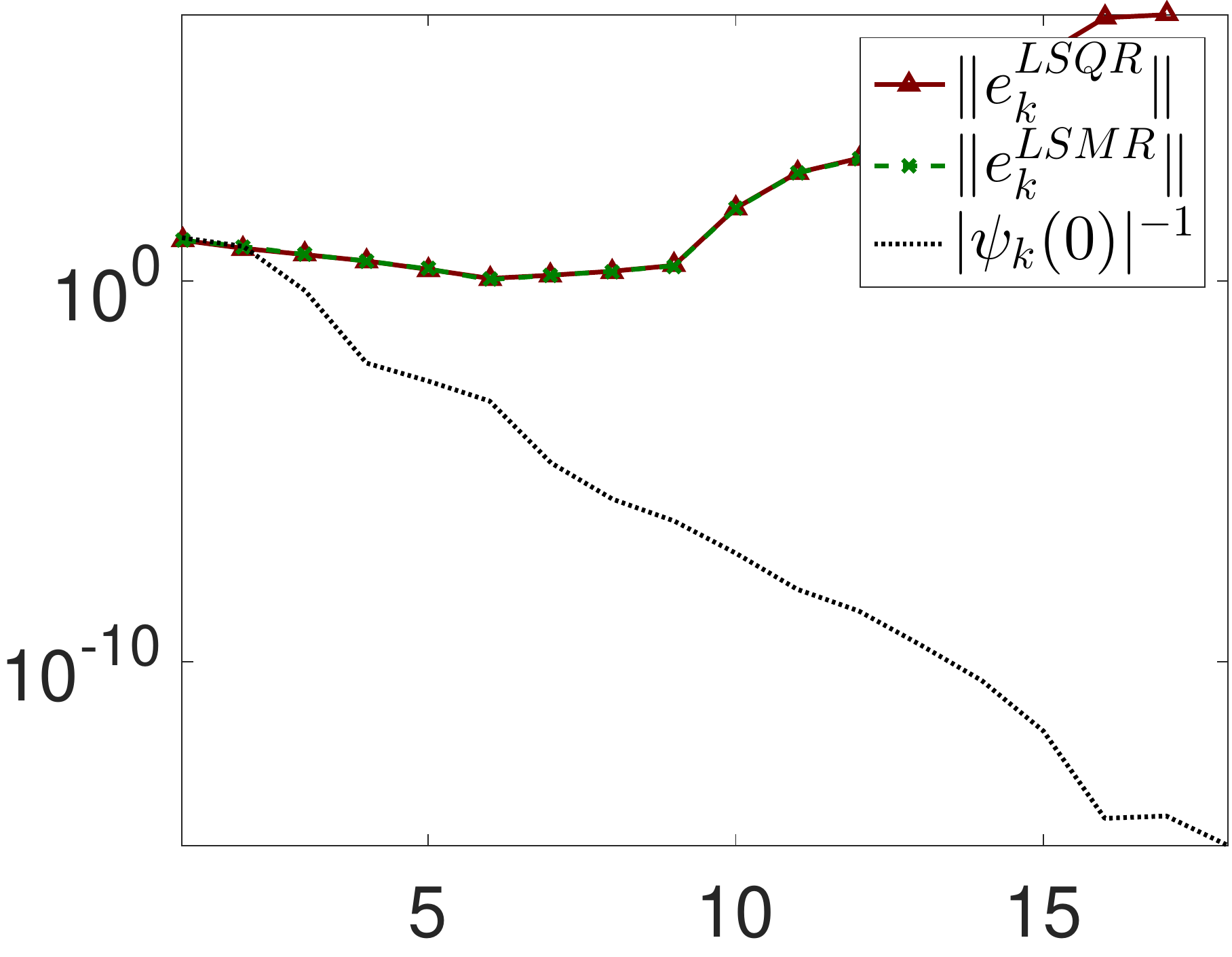}
                \caption{\texttt{shaw(400)}, $\delta_\text{noise} = 10^{-3}$, red noise}
        \end{subfigure}

        \vspace*{.3cm}

        \begin{subfigure}[b]{0.32\textwidth}
        \captionsetup{justification=centering}
                \includegraphics[width=\textwidth]{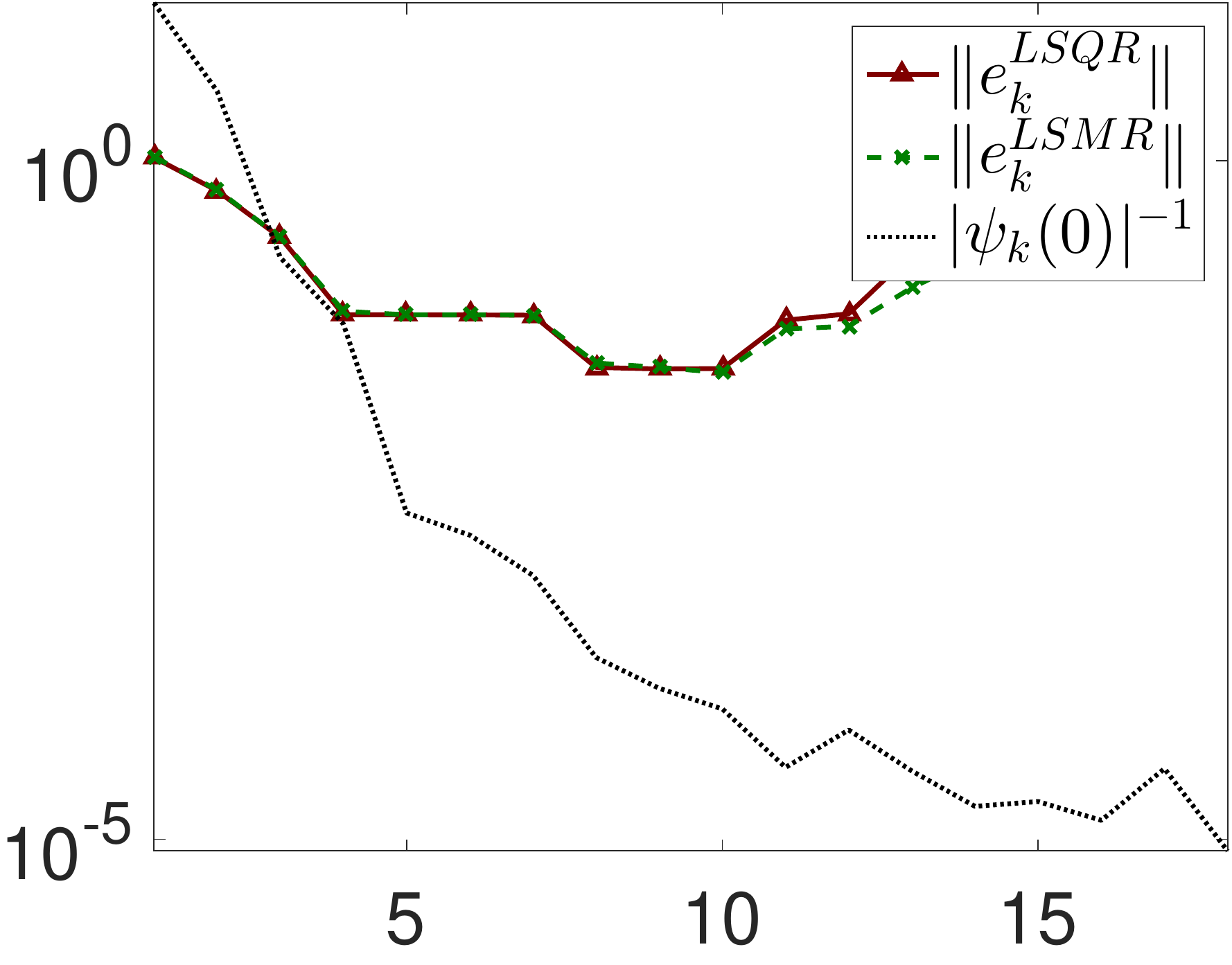}
                \caption{\texttt{phillips(400)}, $\delta_\text{noise} = 10^{-3}$, white noise}
        \end{subfigure}
        \begin{subfigure}[b]{0.32\textwidth}
        \captionsetup{justification=centering}
                \includegraphics[width=\textwidth]{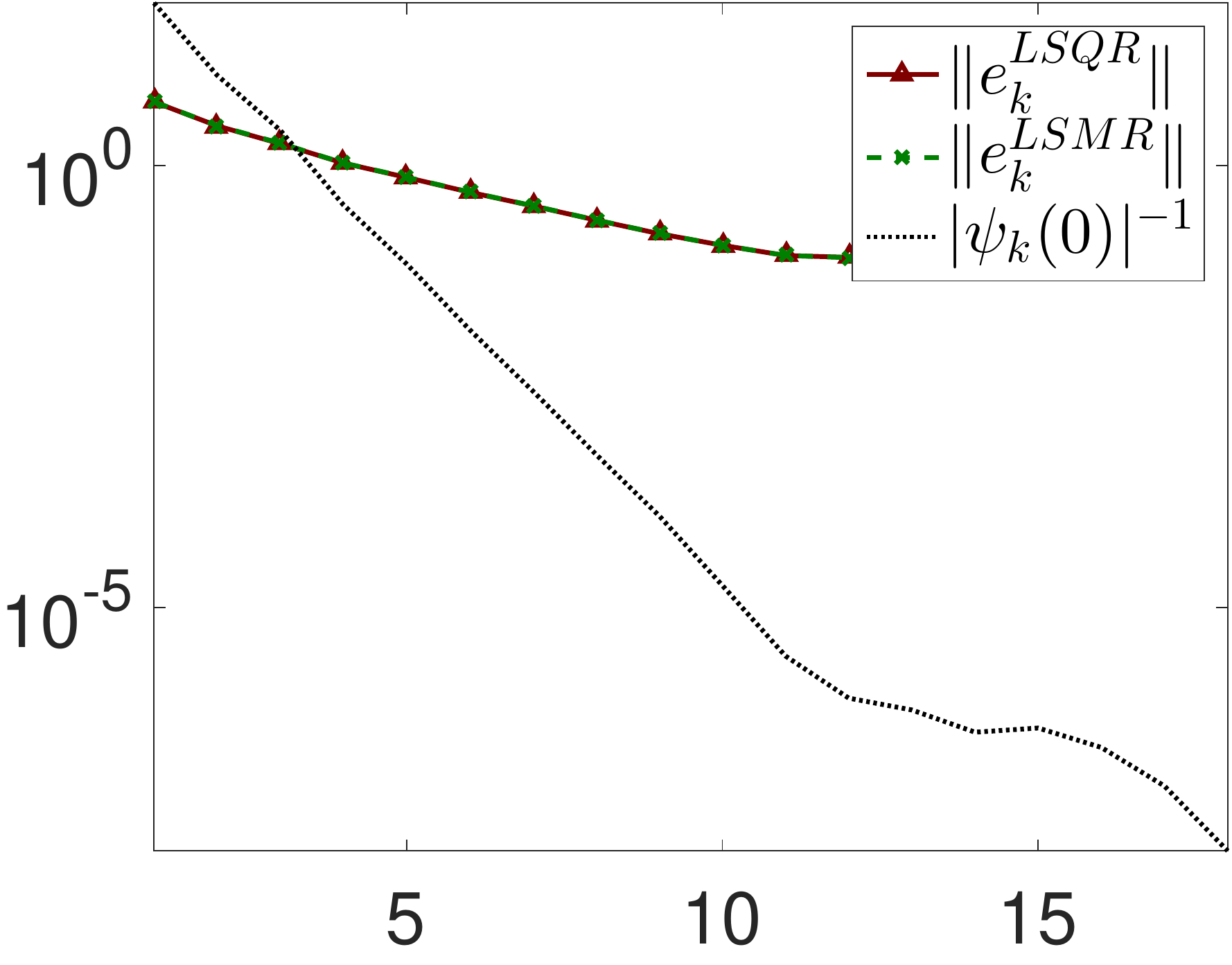}
                \caption{\texttt{gravity(400)}, $\delta_\text{noise} = 10^{-4}$, Poisson}
        \end{subfigure}
        \begin{subfigure}[b]{0.32\textwidth}
        \captionsetup{justification=centering}
                \includegraphics[width=\textwidth]{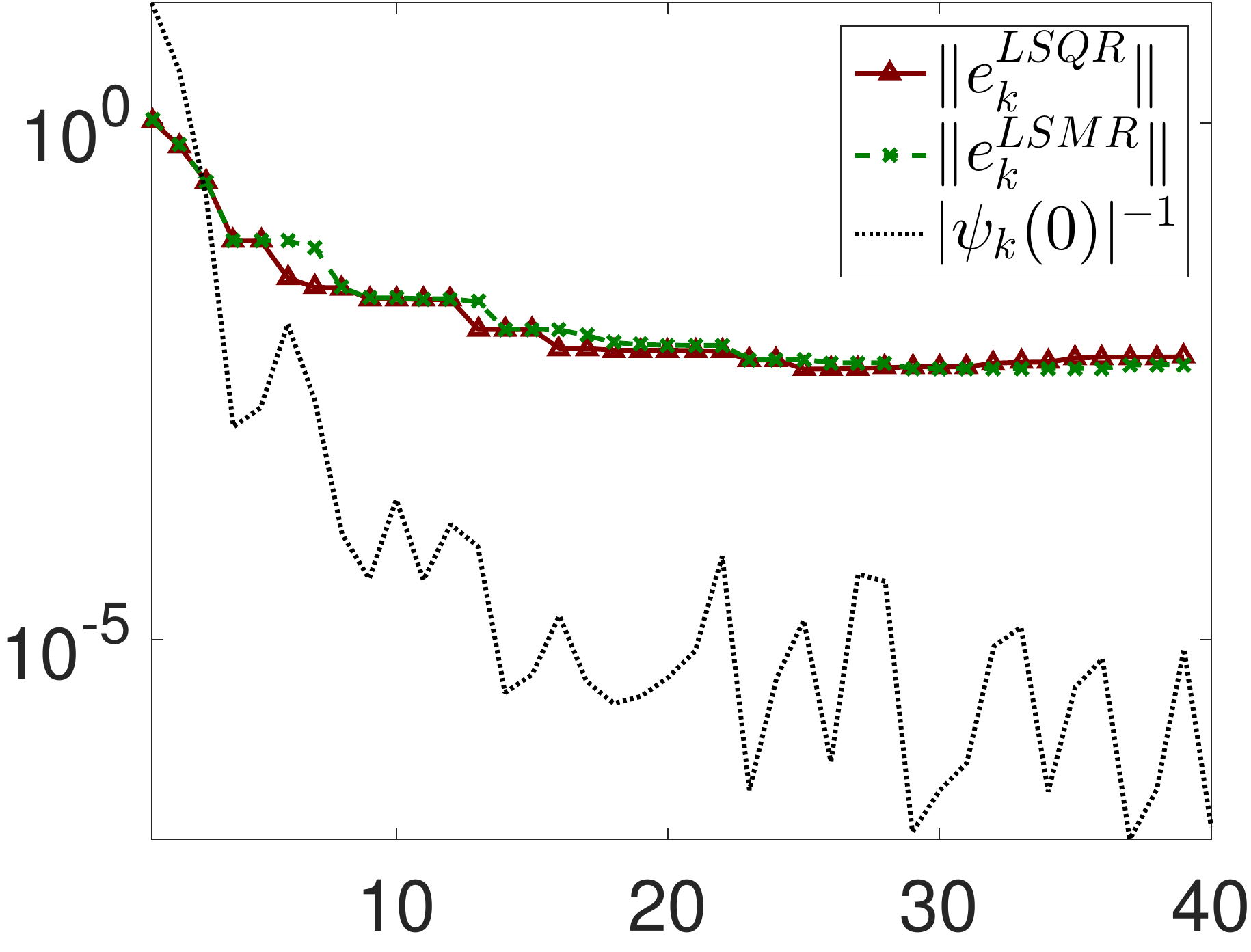}
                \caption{\texttt{phillips(400)},\\ $\delta_\text{noise} = 10^{-5}$, white noise}
                \end{subfigure}
               \caption{The size of the error of LSMR and LSQR in comparison with the inverse of the size of $\psi_k(0)$ for various test problems with various noise characteristics. Since $|\psi_k(0)|$ often grow on average till very late iterations, the semiconvergence curves exhibit similar behavior. In Figure~(f) without reorthogonalization.}
\label{fig:error_lsqr_lsmr}
\end{figure}

\medskip

Figure \ref{fig:remaining_perturbations} illustrates the match between the noise vector and residual of CRAIG, LSQR and LSMR method. We see that
while CRAIG residual resembles noise only in the noise revealing iteration, LSQR and LSMR are less sensitive to the particular number of iterations $k$ as the residuals are combinations of bidiagonalization vectors with appropriate coefficients. Moreover, the best match in LSQR and LSMR method overcomes the best match in CRAIG. This is caused by the fact that the remaining low-frequency part is efficiently suppressed by the linear combination.

\begin{figure}
\centering
\begin{subfigure}[b]{.8\textwidth}
\includegraphics[width = .95\textwidth]{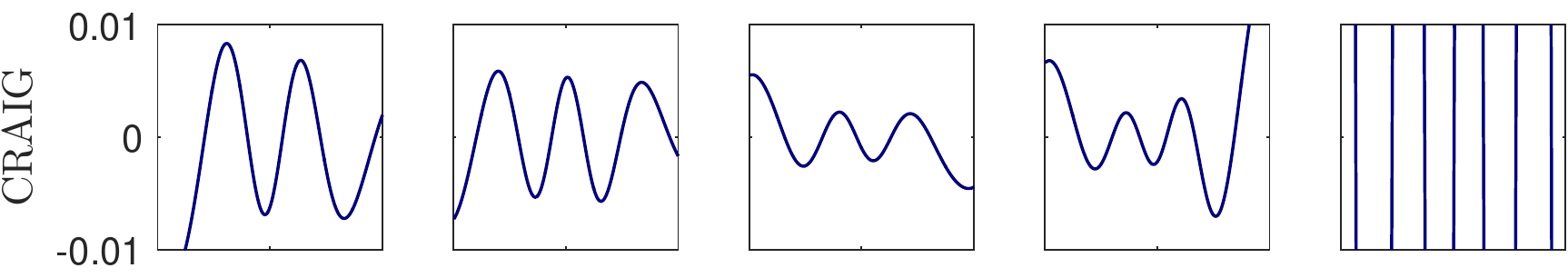}
\includegraphics[width = .95\textwidth]{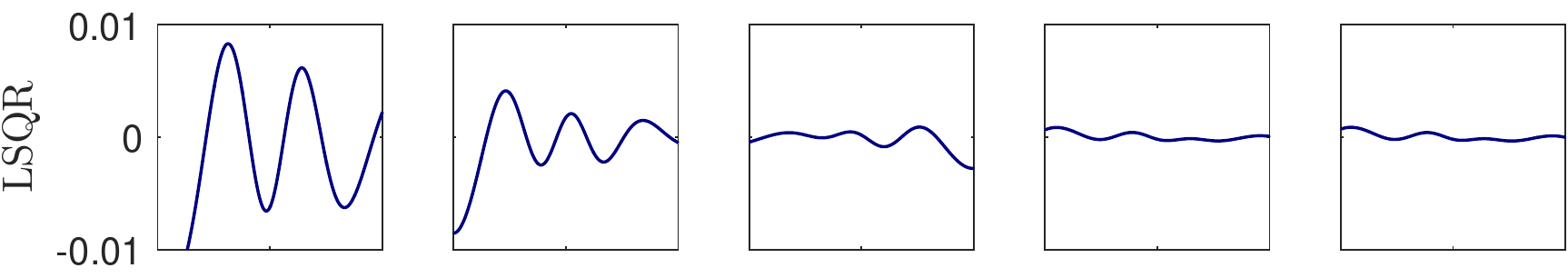}
\includegraphics[width = .95\textwidth]{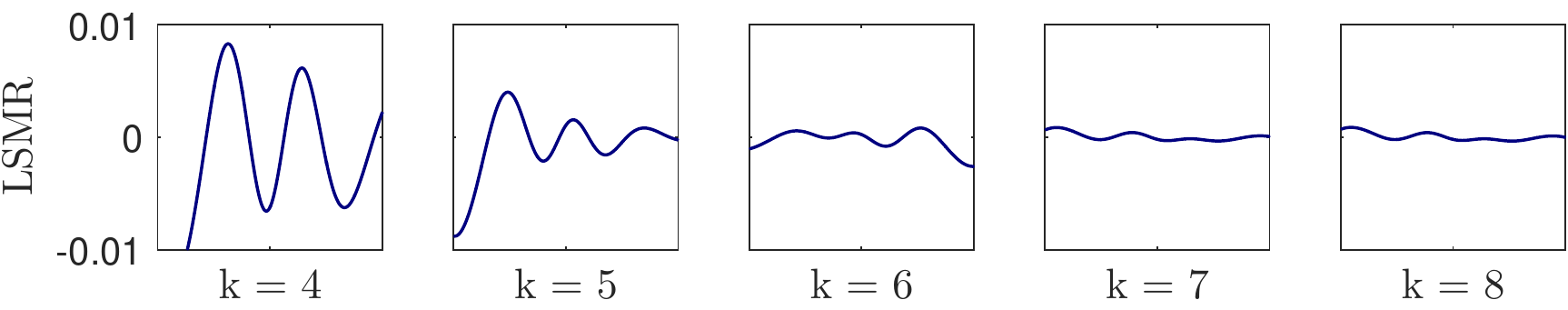}
\caption{$\eta - r_k$, white noise}
\end{subfigure}

\vspace{.3cm}

\begin{subfigure}[b]{.8\textwidth}
\includegraphics[width = .95\textwidth]{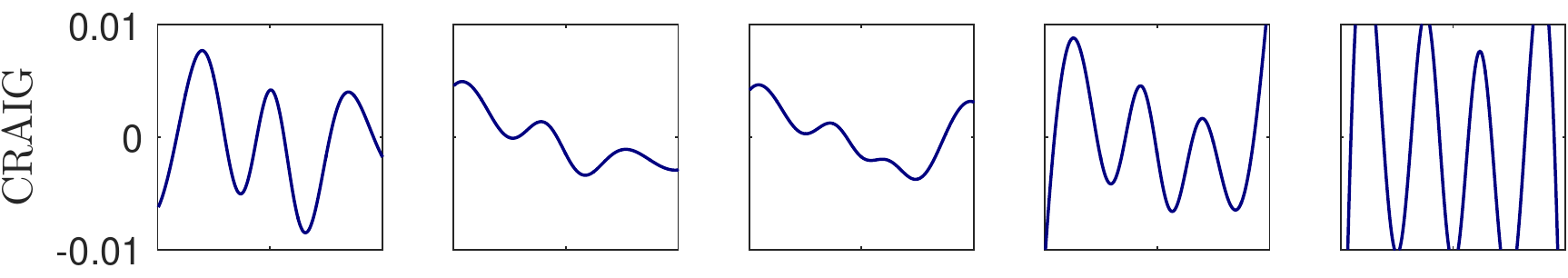}
\includegraphics[width = .95\textwidth]{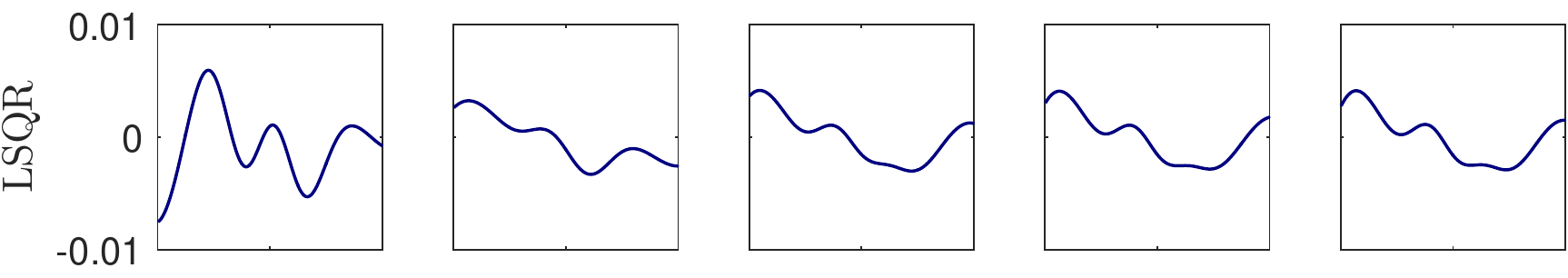}
\includegraphics[width = .95\textwidth]{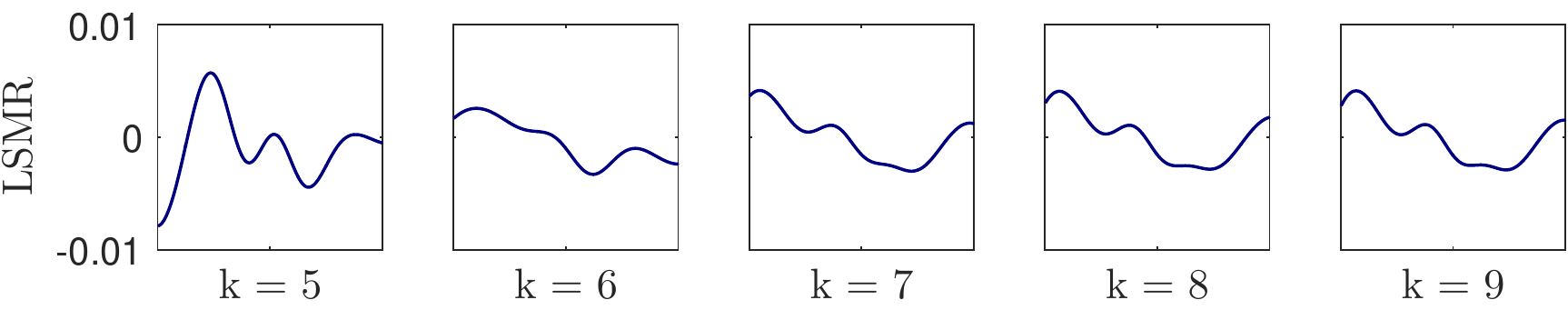}
\caption{$\eta - r_k$, red noise}
\end{subfigure}
\caption{Difference between the noise vector and the residual of considered iterative methods for the problem \texttt{shaw} with white and red noise noise, $\delta_\text{noise} = 10^{-3}$. 
Residuals of LSQR and LSMR have similar approximation properties with respect to the noise vector.}\label{fig:remaining_perturbations}
\end{figure}

\section{Numerical experiments for 2D problems}\label{sec:2Dproblems}

In this section we discuss validity of the conclusions made above for larger 2D inverse problems,
where the smoothing property of $A$ (revealing itself in the decay of singular values) is typically less significant. 
Consequently, noise propagation in the bidiagonalization process may be more complicated; see also \cite{HnetynkovaAlgoritmy}.
However, we illustrate that essential aspects of the behavior described in previous sections are still present. 
Note that all experiments in this section are computed {\em without} reorthogonalization. We consider the following 
2D benchmarks:

\begin{description}
\item{\bf Medical tomography problem} --- a simplified 2D model of X-ray medical tomography adopted
from \cite{Hansen2012AIR}, function \texttt{paralleltomo(256,0:179,362)}. The data is represented the $256$-by-$256$ discretization of the Shepp--Logan phantom projected in angles
$\theta = 0\degree,1\degree,\ldots,179\degree$
by $362$ parallel rays, resulting in a linear algebraic problem with $A\in\mathbb{R}^{65160\times 65536}$. We use Poisson-type additive noise $\eta$ generated as follows (see \cite[chap. 2.6]{Buzug} and \cite{AndersenJorgensen}) to simulate physically realistic noise:
\begin{verbatim}
A = paralleltomo(N,theta)/N;  % forward model
t = exp(-A*x);                % transmission probabilities
c = poissrnd(t*N0);           % photon counts
eta = -log(c/N0);             % noisy measurements
\end{verbatim}
where $N_0=10^5$ denotes the mean number of photons, resulting in the noise level $\delta_\text{noise} \approx 0.028$. We refer to this test problem as \texttt{paralleltomo}.

\item{\bf Seismic tomography problem} --- a simplified 2D model of seismic tomography
adopted from \cite{Hansen2012AIR}, function \texttt{seismictomo(100,100,200)}.
The data is represented by a $100$-by-$100$ discretization
of a vertical domain intersecting two tectonic plates with $100$ sources located on its right boundary and $200$
receivers (seismographs), resulting in a linear algebraic problem with $A\in\mathbb{R}^{20000\times 10000}$. The right-hand side is polluted with additive white noise with $\delta_\text{noise} = 0.01$. We refer to this test problem as \texttt{seismictomo}.

\item{\bf Image deblurring problem} --- an image deblurring problem with spatially
variant blur adopted from \cite{Berisha2013Iterative,Nagy2004Iterative},
data \texttt{VariantGaussianBlur1}. The data is represented by a monochrome microscopic $316$-by-$316$ image of a grain blurred by spatially variant Guassian blur (with $49$ different point-spread functions), resulting in a linear algebraic problem with $A\in\mathbb{R}^{99856\times 99856}$. The right-hand side is polluted with additive white noise with $\delta_\text{noise} = 0.01$. We refer to this test problem as
 \texttt{vargaussianblur}.
\end{description}

Figure~\ref{fig:2D_factors} shows the absolute terms of the Lanczos polynomials $\varphi_k$ and $\psi_k$. We can identify the two phases of the behavior of $\varphi_k(0)$ --  average growth and average decay. However, the transition does not take place 
in one particular (noise revealing) iteration, but rather in a few subsequent steps, which we refer to as the
{\em noise revealing phase} of the bidiagonalization process. 
The size of $\psi_k(0)$ grows on average till late iterations, however, we often observe here that the speed of this growth slows down after the noise 
revealing phase. In conclusion, both curves $|\varphi_k(0)|$ and $|\psi_k(0)|$ can be flatter than for 1D problem considered in previous sections. 
This can be further pronounced for problems with low noise levels.

\begin{figure}
        \begin{subfigure}[b]{0.32\textwidth}
                \includegraphics[width = .95\textwidth]{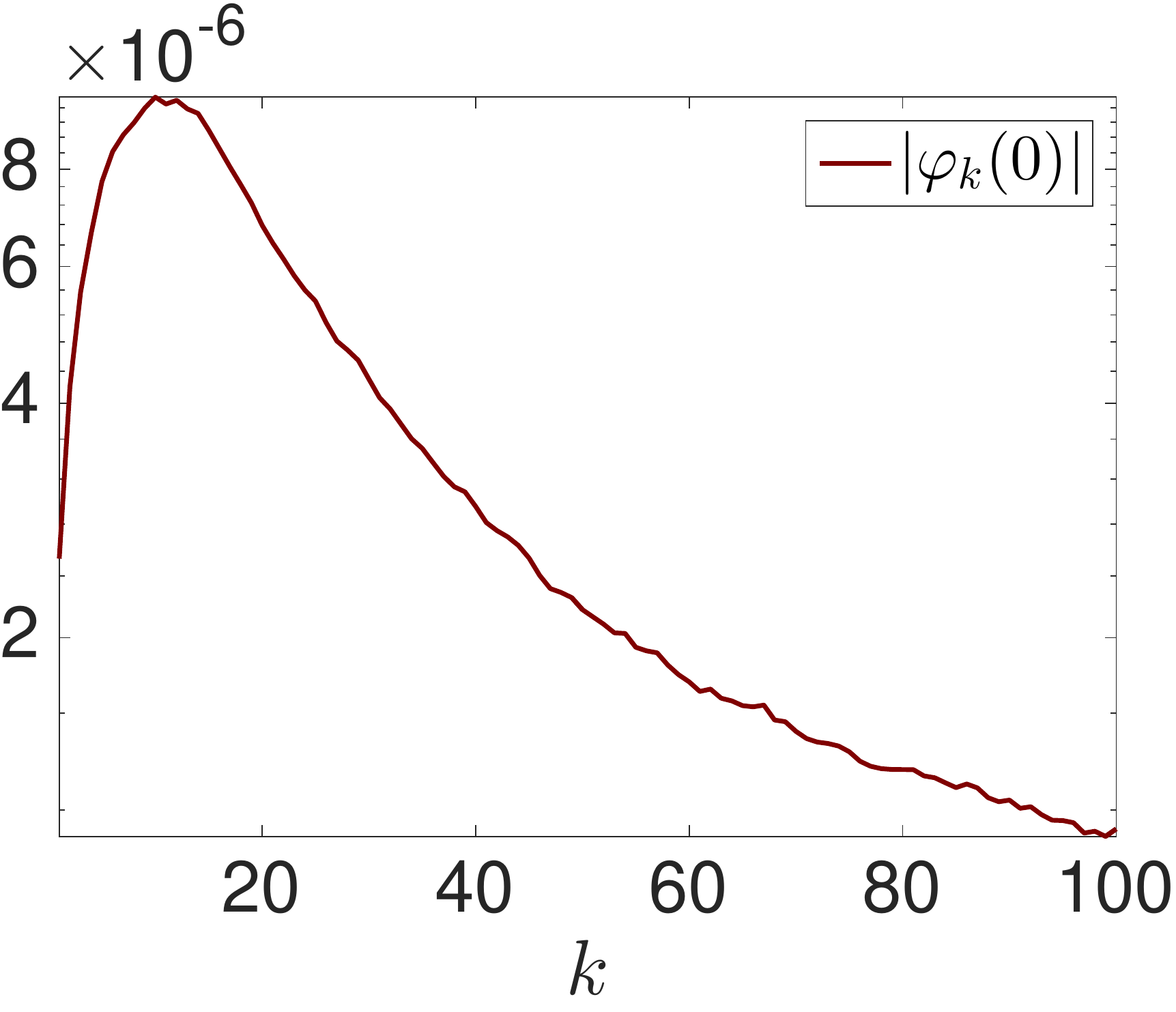}
        \end{subfigure}
        \begin{subfigure}[b]{0.32\textwidth}
                \includegraphics[width = .95\textwidth]{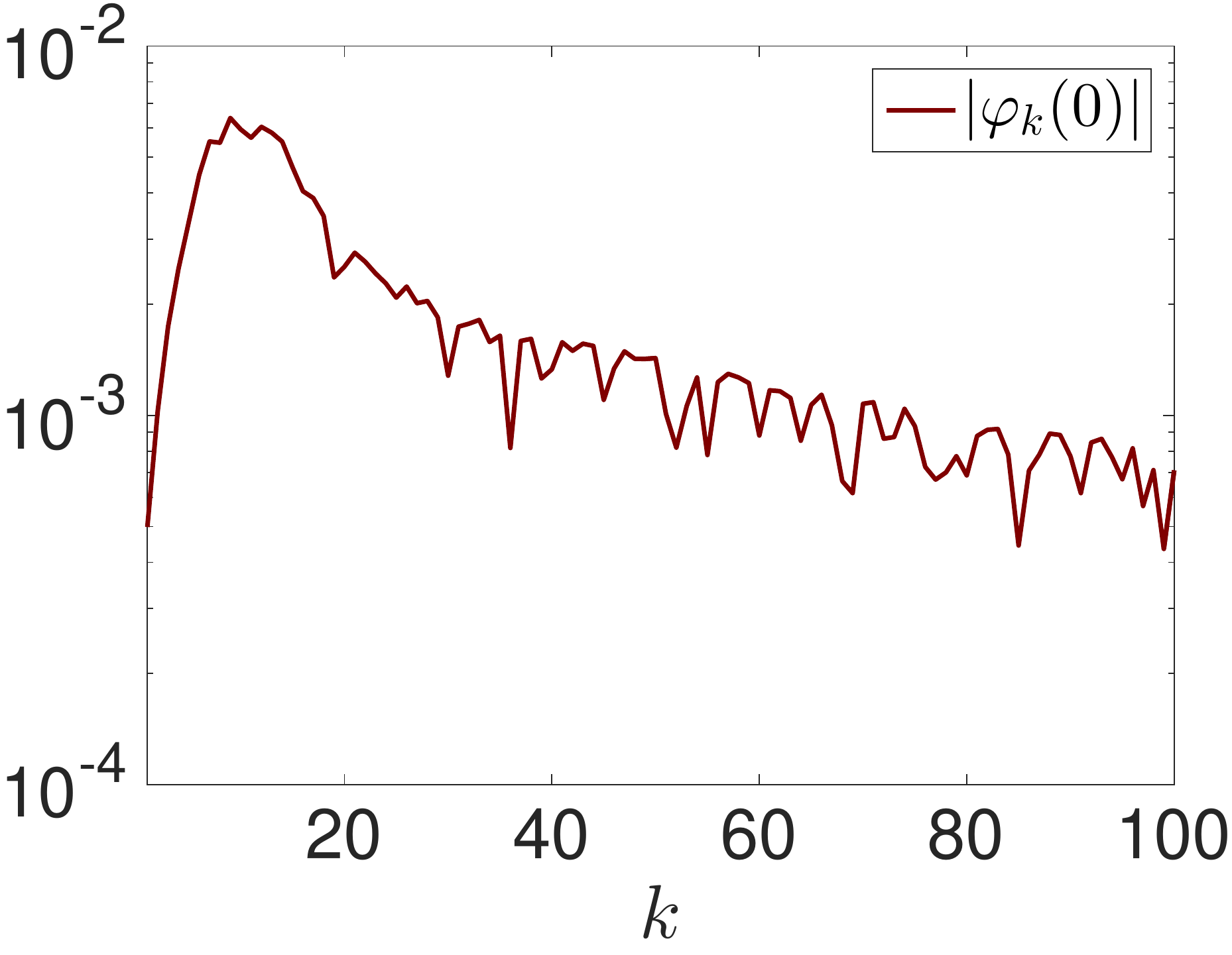}
        \end{subfigure}
        \begin{subfigure}[b]{0.32\textwidth}
                \includegraphics[width = .95\textwidth]{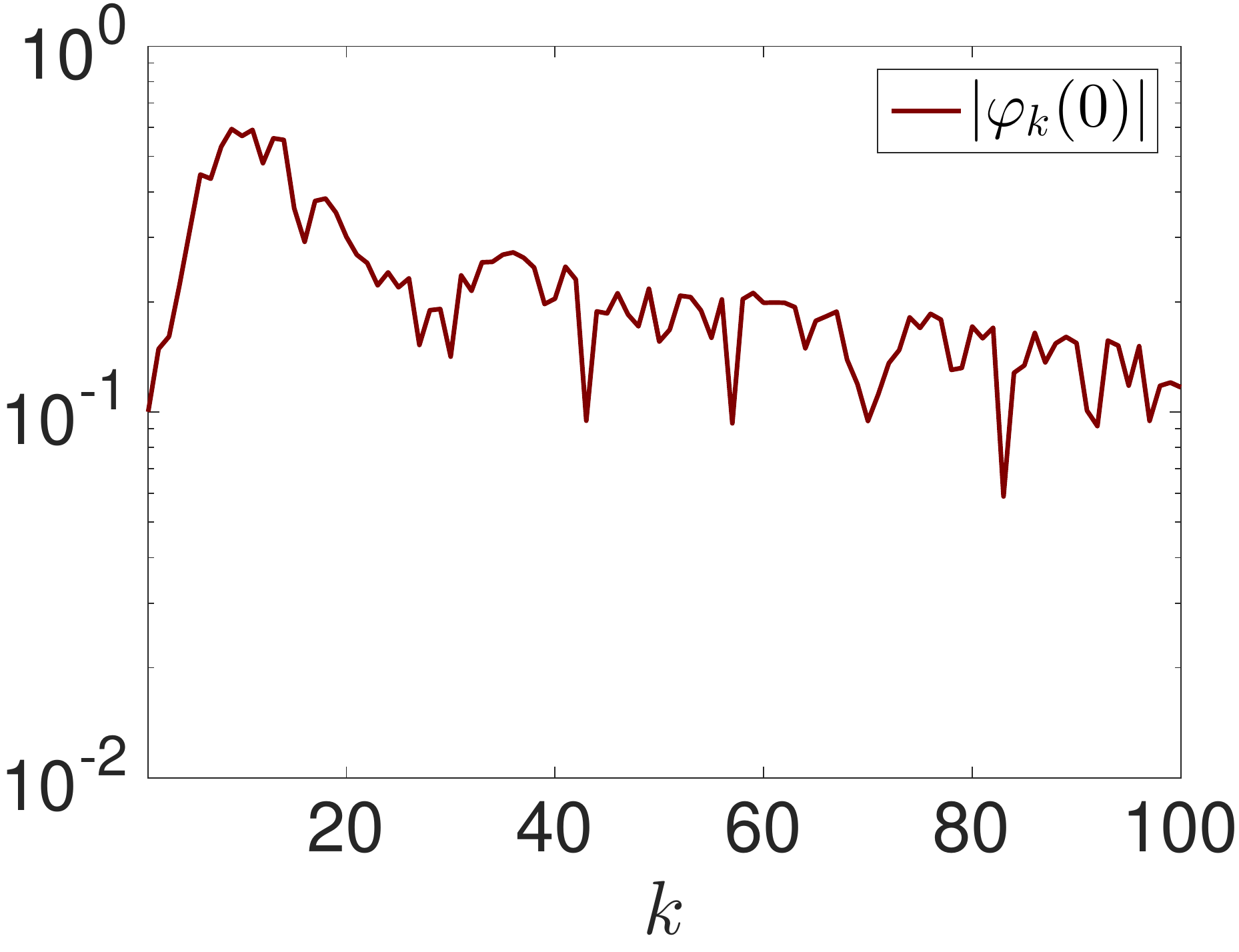}
        \end{subfigure}
        
        \begin{subfigure}[b]{0.32\textwidth}
                \includegraphics[width = .95\textwidth]{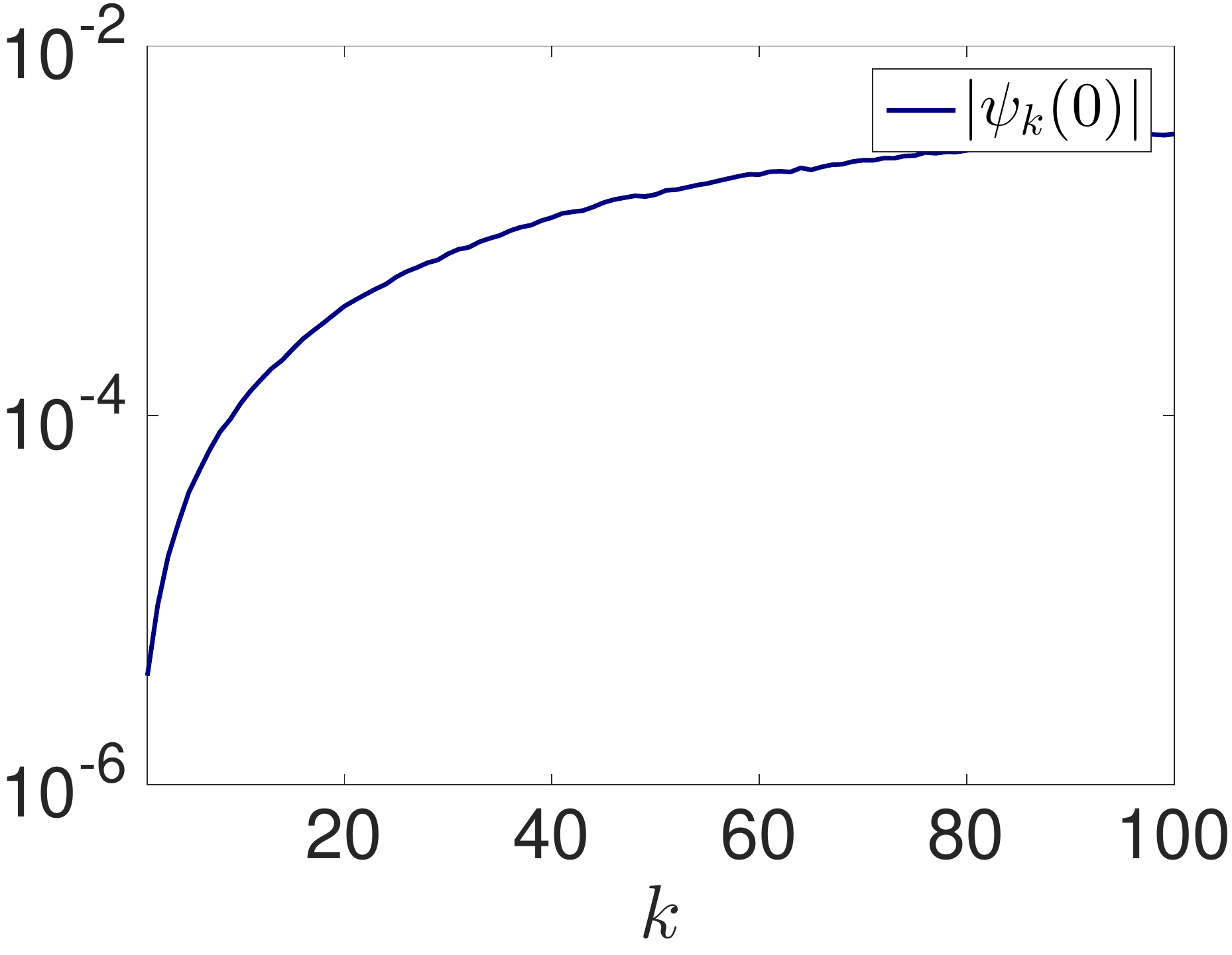}
                \caption{\texttt{vargaussianblur}}
        \end{subfigure}
        \begin{subfigure}[b]{0.32\textwidth}
                \includegraphics[width = .95\textwidth]{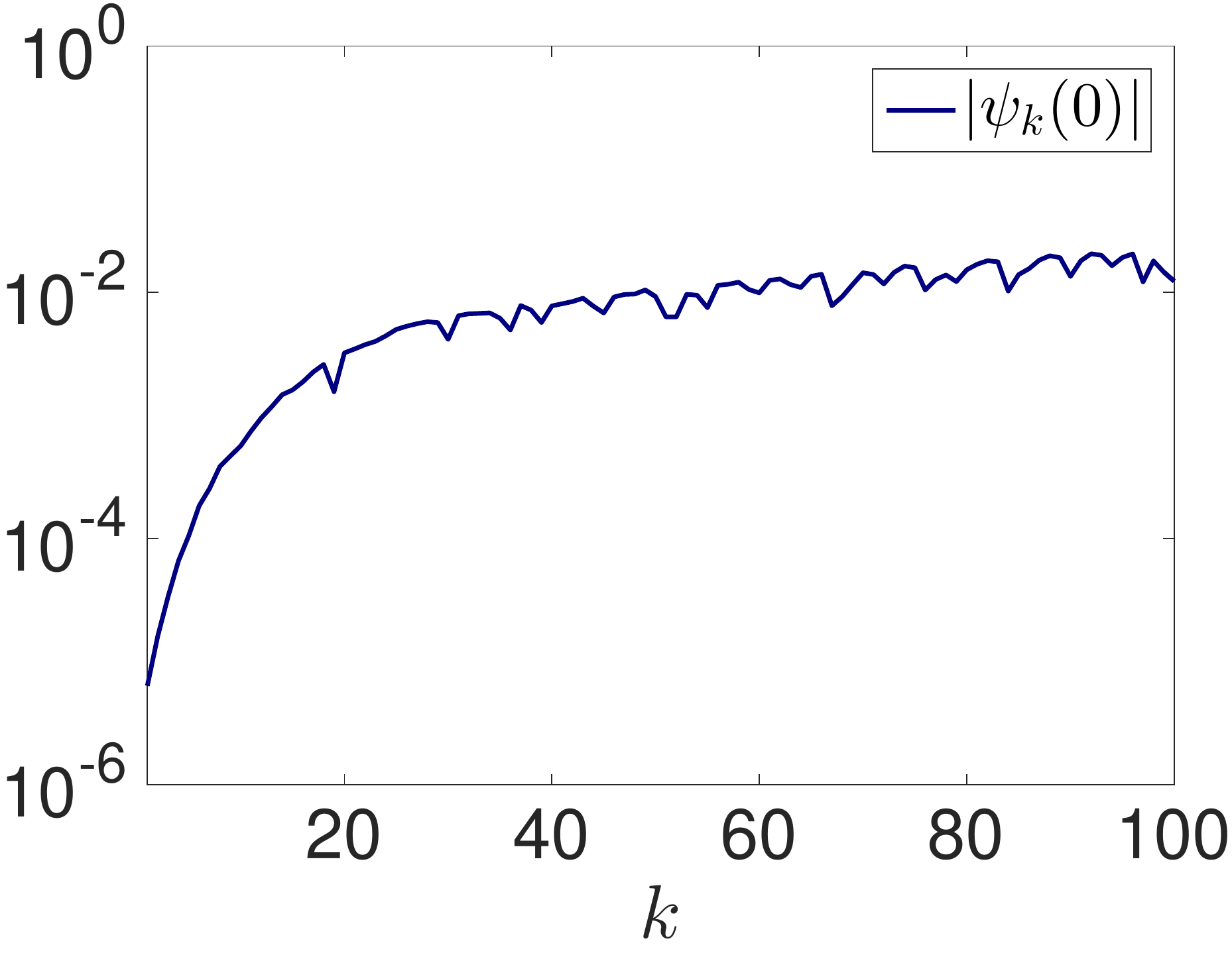}
                \caption{\texttt{seismictomo}}\label{fig:factor_seismic}
        \end{subfigure}
        \begin{subfigure}[b]{0.32\textwidth}
                \includegraphics[width = .95\textwidth]{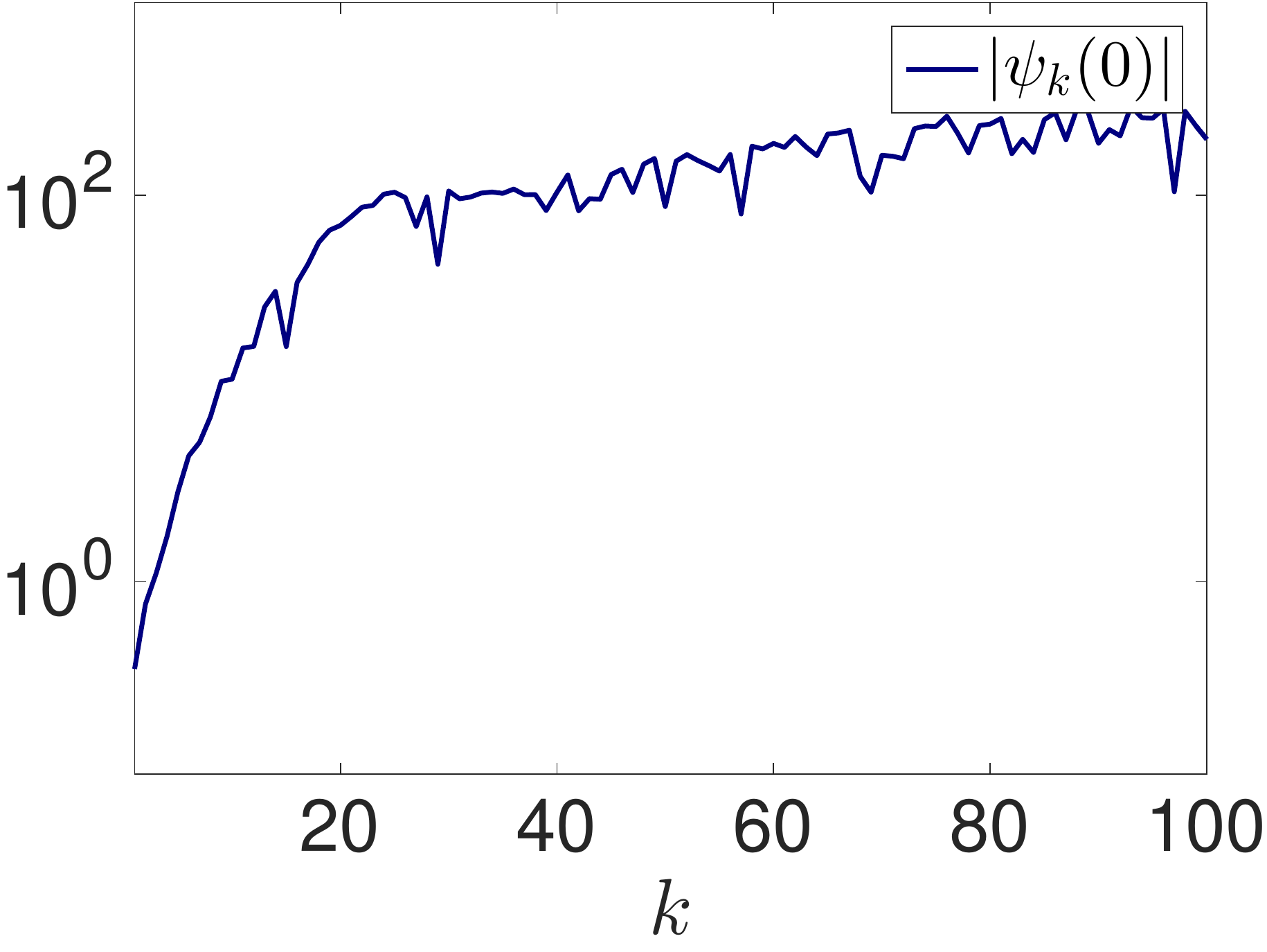}
                \caption{\texttt{paralleltomo} }
        \end{subfigure}
        
        \caption{The size of the absolute term of the Lanczos polynomials $\varphi_k$ and $\psi_k$ for selected 2D problems contaminated by noise as 
				  described in the text. For all problems $\delta_\text{noise} \approx 10^{-2}$. Computed without reorthogonalization.}\label{fig:2D_factors}
\end{figure}

Figure~\ref{fig:2D_vectors} shows several (appropriately reshaped) left bidiagonalization
vectors $s_k$ and their cumulative periodograms for the problem \texttt{seismictomo}. 
Even though it is hard to make clear conclusions based on the vectors $s_k$ themselves, we see that the periodogram 
for $k=10$ is flatter than the periodograms for smaller or larger values of $k$, meaning that $s_{10}$ resembles most white noise.
This corresponds to Figure~\ref{fig:factor_seismic} showing that $s_{10}$ belongs to the noise revealing phase of the bidiagonalization process.
Note that similar flatter periodograms can be obtained for other few vectors belonging to this phase.

\begin{figure}
\centering
        \begin{subfigure}[b]{\textwidth}
                \includegraphics[width = .95\textwidth]{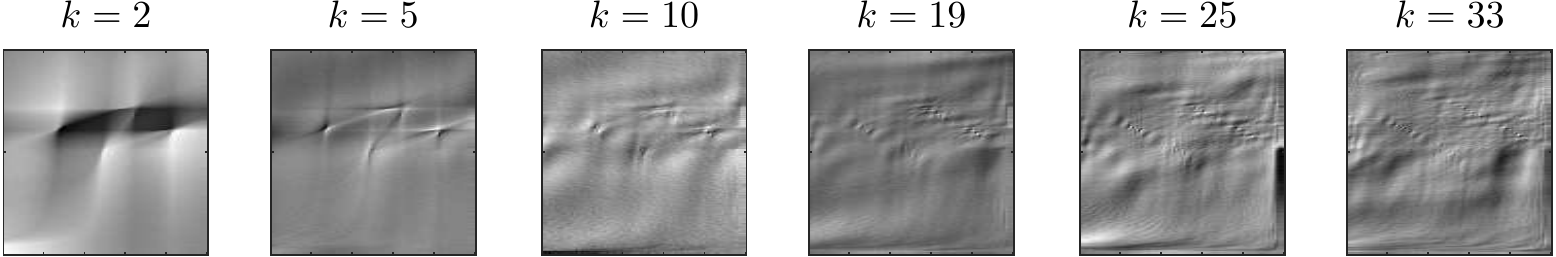}
                \caption{left bidiagonalization vectors $s_k$ (reshaped)}\label{fig:2D_bidvect}
        \end{subfigure}
        \begin{subfigure}[b]{\textwidth}
                \includegraphics[width = .95\textwidth]{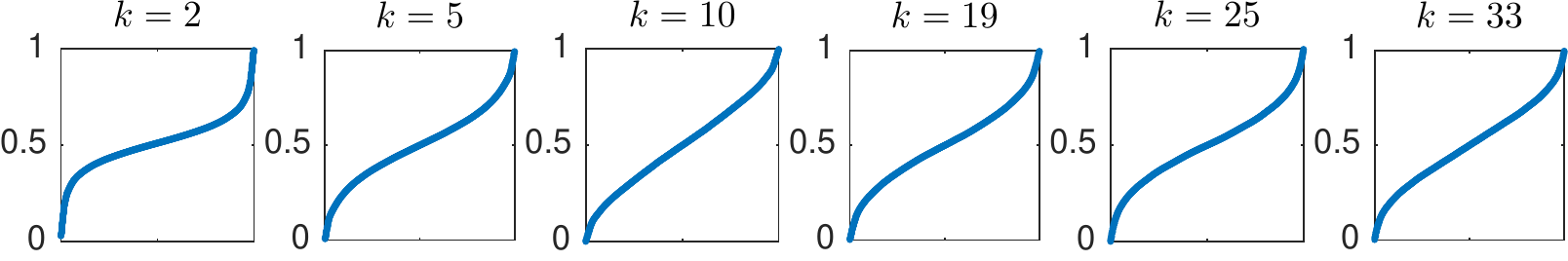}
                \caption{cumulative periodograms of $s_k$}\label{fig:2D_periodograms}
        \end{subfigure}
\caption{Left bidiagonalization vectors $s_k$ for the problem \texttt{seismictomo} and their cumulative periodograms. The periodogram of the vector 
$s_{10}$ belonging to the noise revealing phase of the bidiagonalization process is flatter. Computed without reorthogonalization.}\label{fig:2D_vectors}
\end{figure}

The absence of one particular noise revealing vector makes the direct comparison between $s_k$ and the exact noise vector $\eta$ irrelevant here.
However, Propositions~\ref{th:1}--\ref{th:3} remain valid and the overall behavior of the terms $|\varphi_k(0)|$ and $|\psi_k(0)|$ is as expected,
allowing comparing the bidiagonalization-based methods. 
Figure~\ref{fig:2D_errors} gives comparisons of CRAIG, LSQR and LSMR for all considered 2D test problems,
analogous to Figure~\ref{fig:error_all}, \ref{fig:error_craig_lsqr}, and \ref{fig:error_lsqr_lsmr}.
The first row of Figure~\ref{fig:2D_errors} shows that the CRAIG error is minimized approximately in 
the noise revealing phase, i.e., when the residual is minimal, see Section~\ref{sec:relation_CRAIG}.
The minimum is emphasized by the vertical line.

The second row of Figure~\ref{fig:2D_errors} compares the errors of CRAIG and LSQR. According to the derivations
in Section~\ref{sec:residuals}, the curves are similar before the noise revealing phase, after which
they separate with CRAIG diverging more quickly. Note that the size of the inverted amplification factor $\varphi_k(0)$
is included to illustrate the noise revealing phase and has different scaling (specified on the right).

The third row of Figure~\ref{fig:2D_errors} shows the errors of LSQR and LSMR with the underlying size of the inverted
factor $\psi_k(0)$ (scaling specified on the right). The errors behave similarly as long as $|\psi_k(0)|^{-1}$ decays
rapidly, see Section~\ref{ssec:LSMRres}. The LSMR solution is slightly less sensitive to the particular choice of the number of bidiagonalization
iterations $k$, which is a well know property \cite{Fong2011LSMR}.

\begin{figure}
        \begin{subfigure}[b]{0.32\textwidth}
                \includegraphics[width = .95\textwidth]{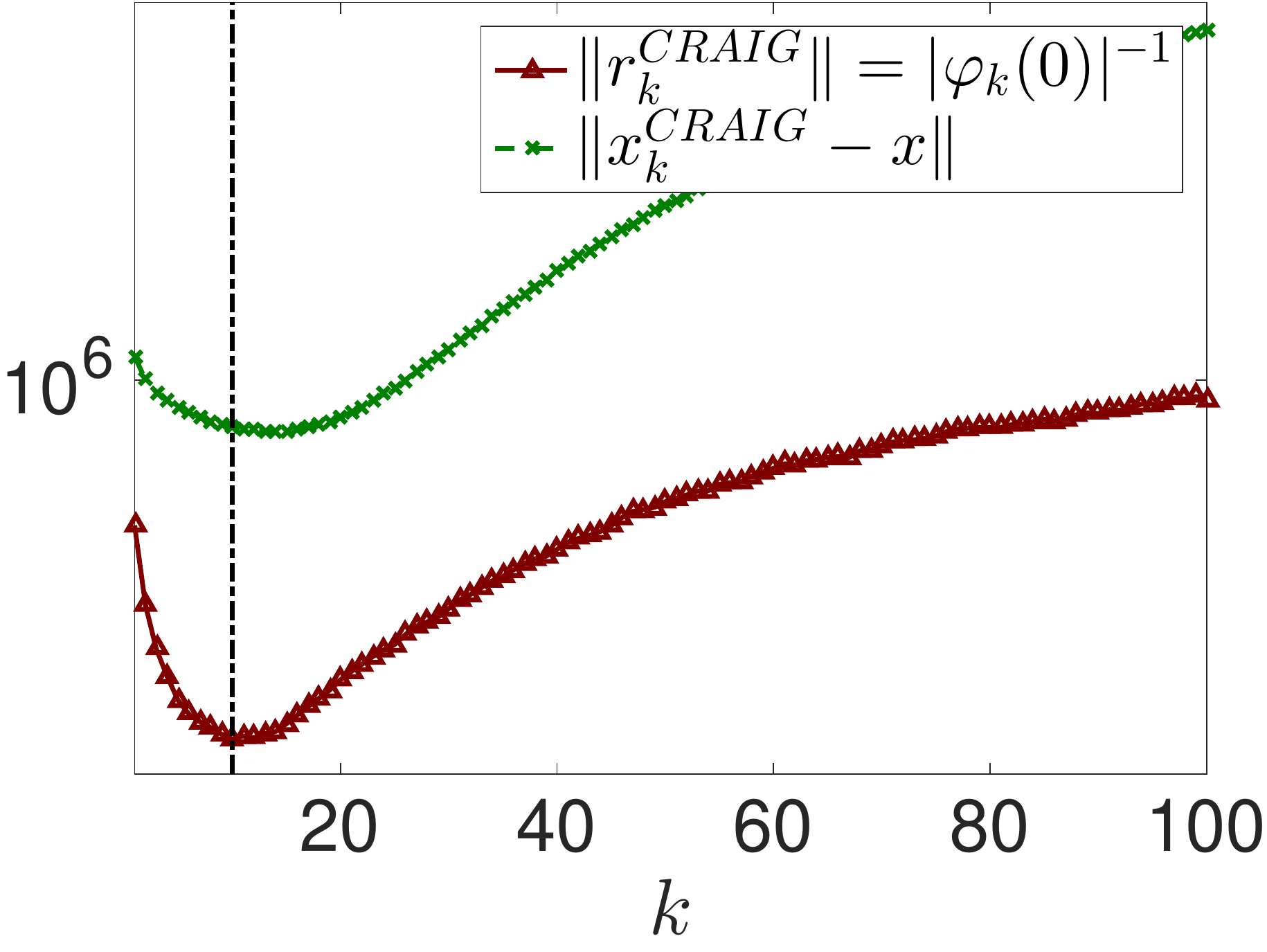}
        \end{subfigure}
        \begin{subfigure}[b]{0.32\textwidth}
                \includegraphics[width = .95\textwidth]{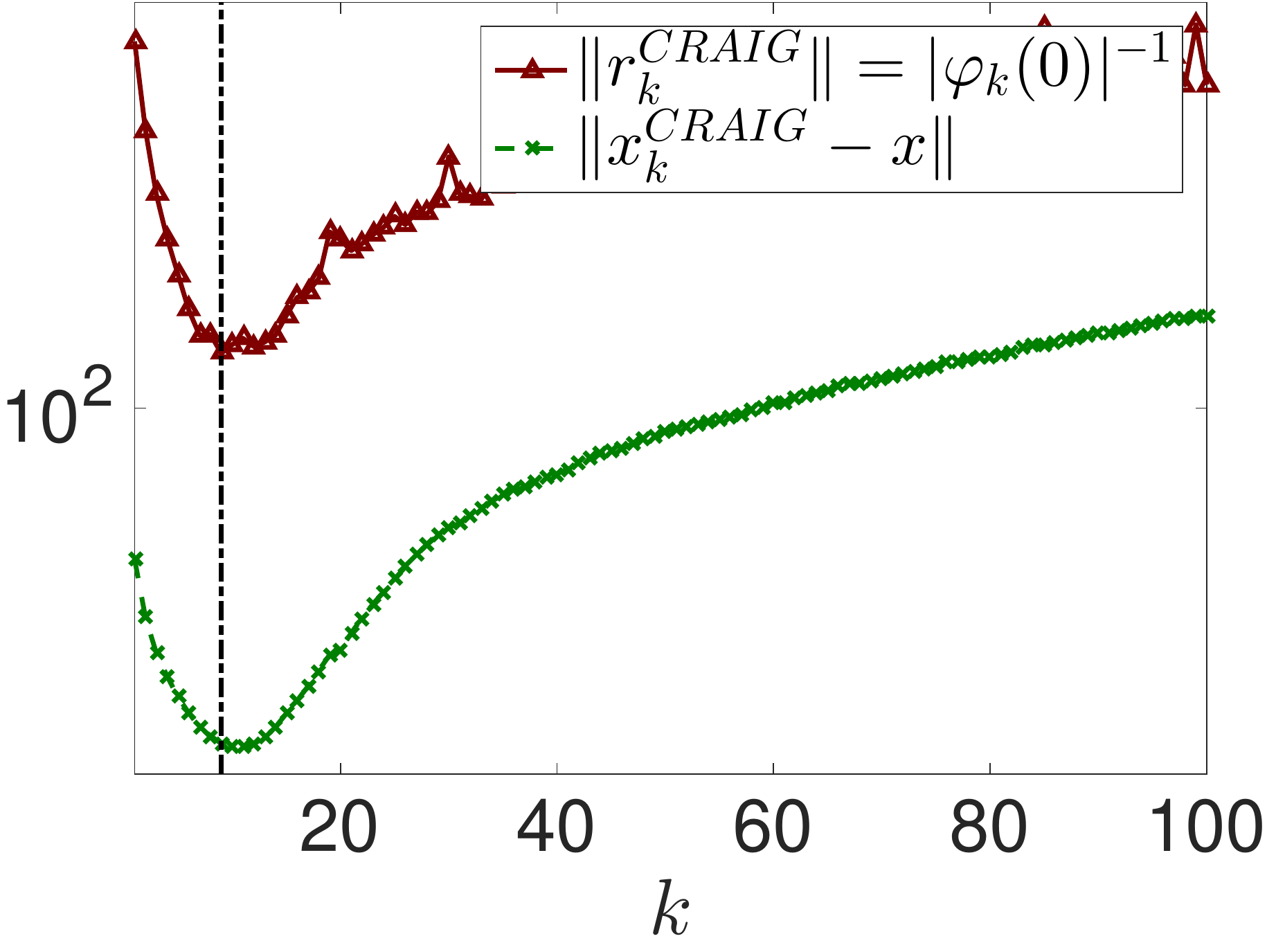}
        \end{subfigure}
        \begin{subfigure}[b]{0.32\textwidth}
                \includegraphics[width = .95\textwidth]{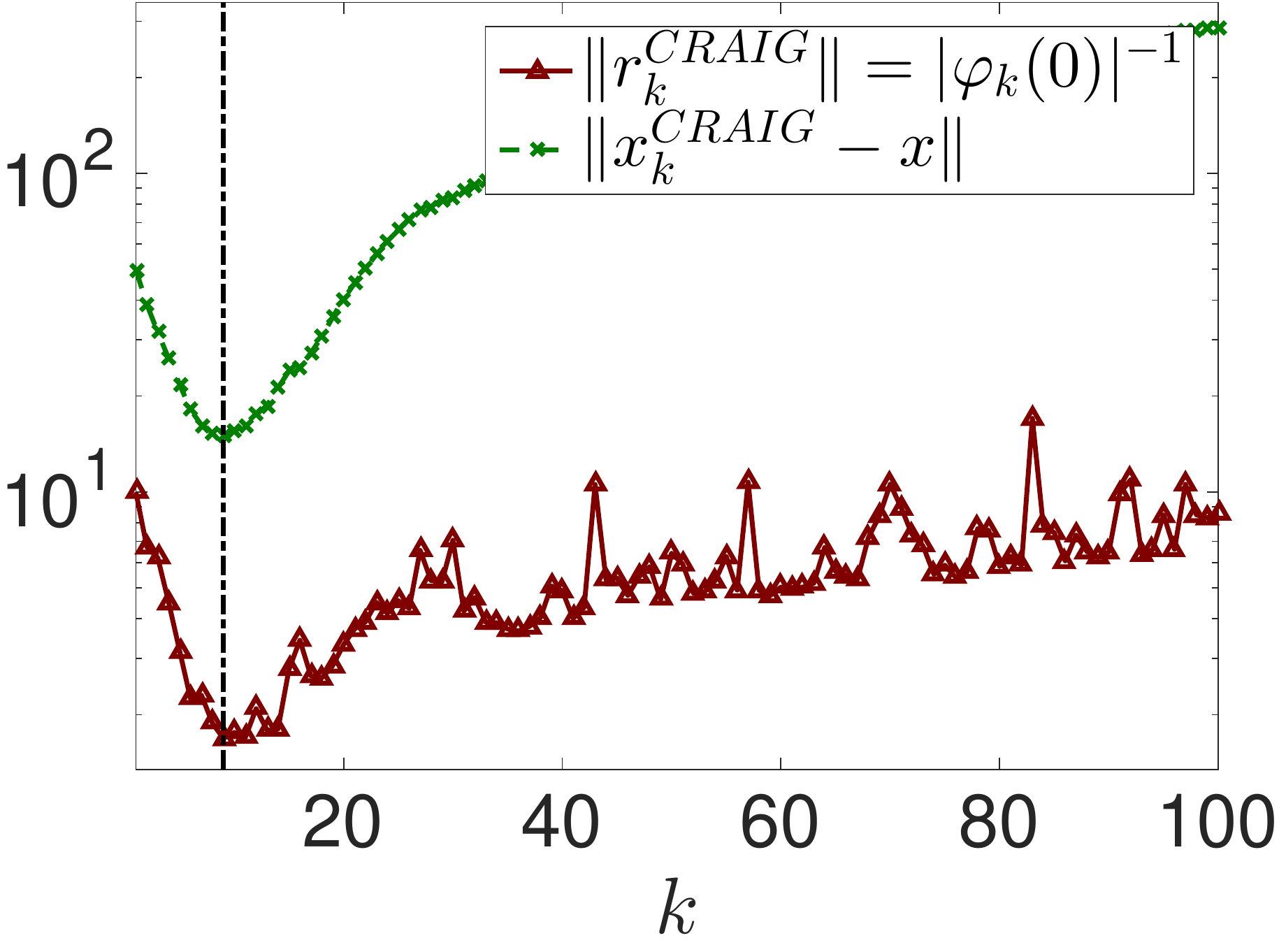}
        \end{subfigure}
        
        \begin{subfigure}[b]{0.32\textwidth}
                \includegraphics[width = .95\textwidth,trim={1.4cm 5cm .4cm 7cm},clip]{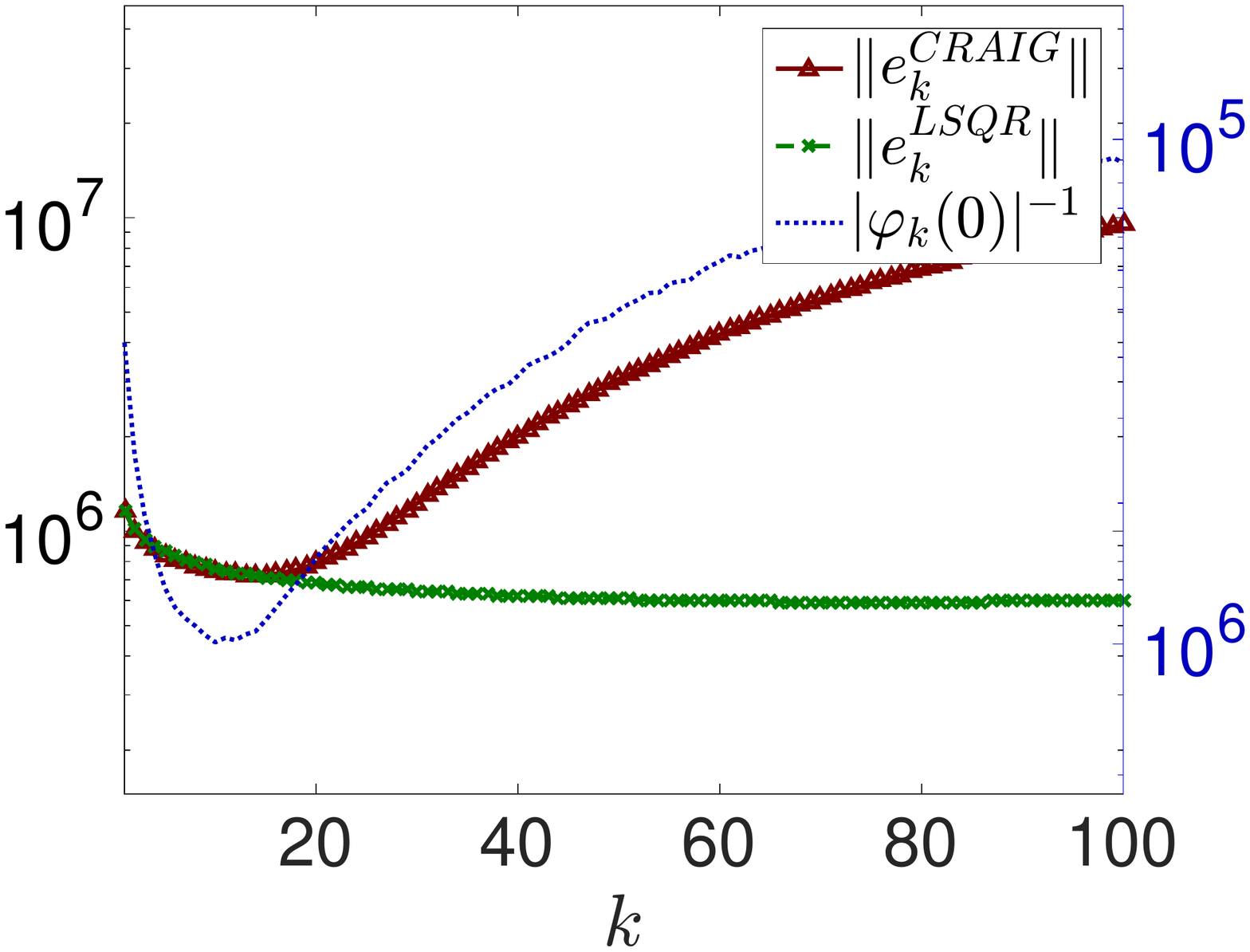}
        \end{subfigure}
        \begin{subfigure}[b]{0.32\textwidth}
                \includegraphics[width = .95\textwidth,trim={1.4cm 5cm .4cm 7cm},clip]{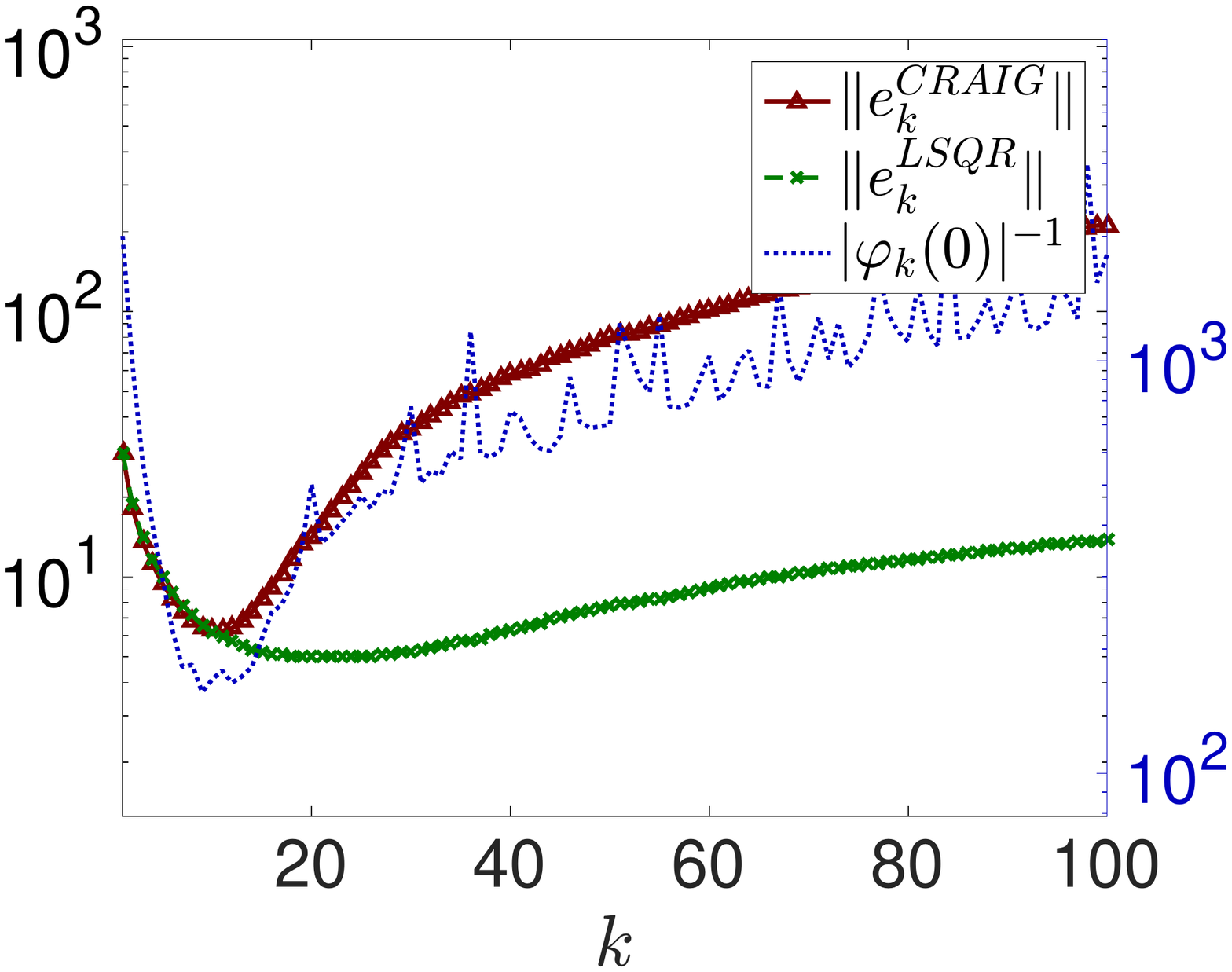}
        \end{subfigure}
        \begin{subfigure}[b]{0.32\textwidth}
                \includegraphics[width = .95\textwidth,trim={1.4cm 5cm .4cm 7cm},clip]{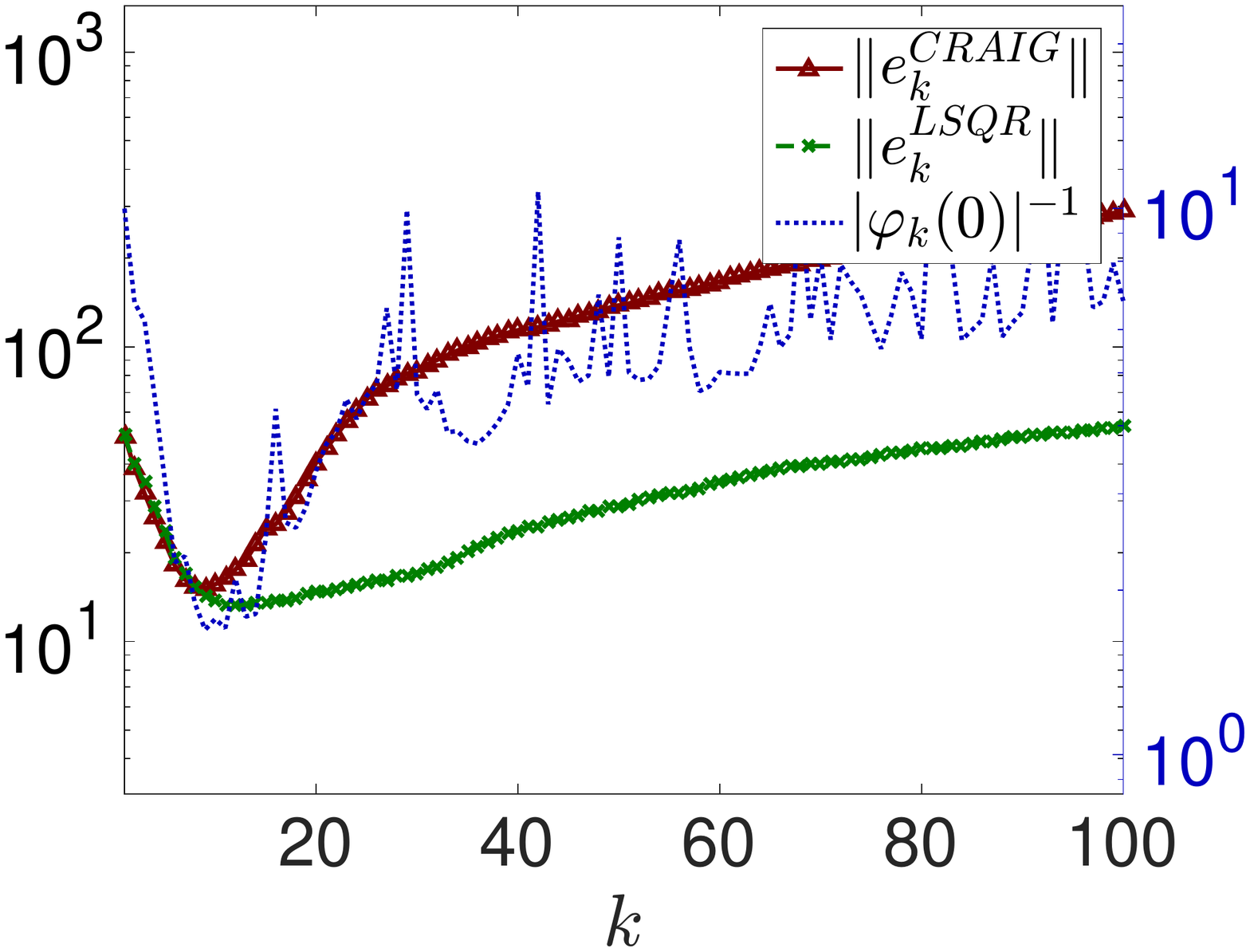}
        \end{subfigure}        
        
        \begin{subfigure}[b]{0.32\textwidth}
                \includegraphics[width = .95\textwidth,trim={1.4cm 5cm .4cm 7cm},clip] {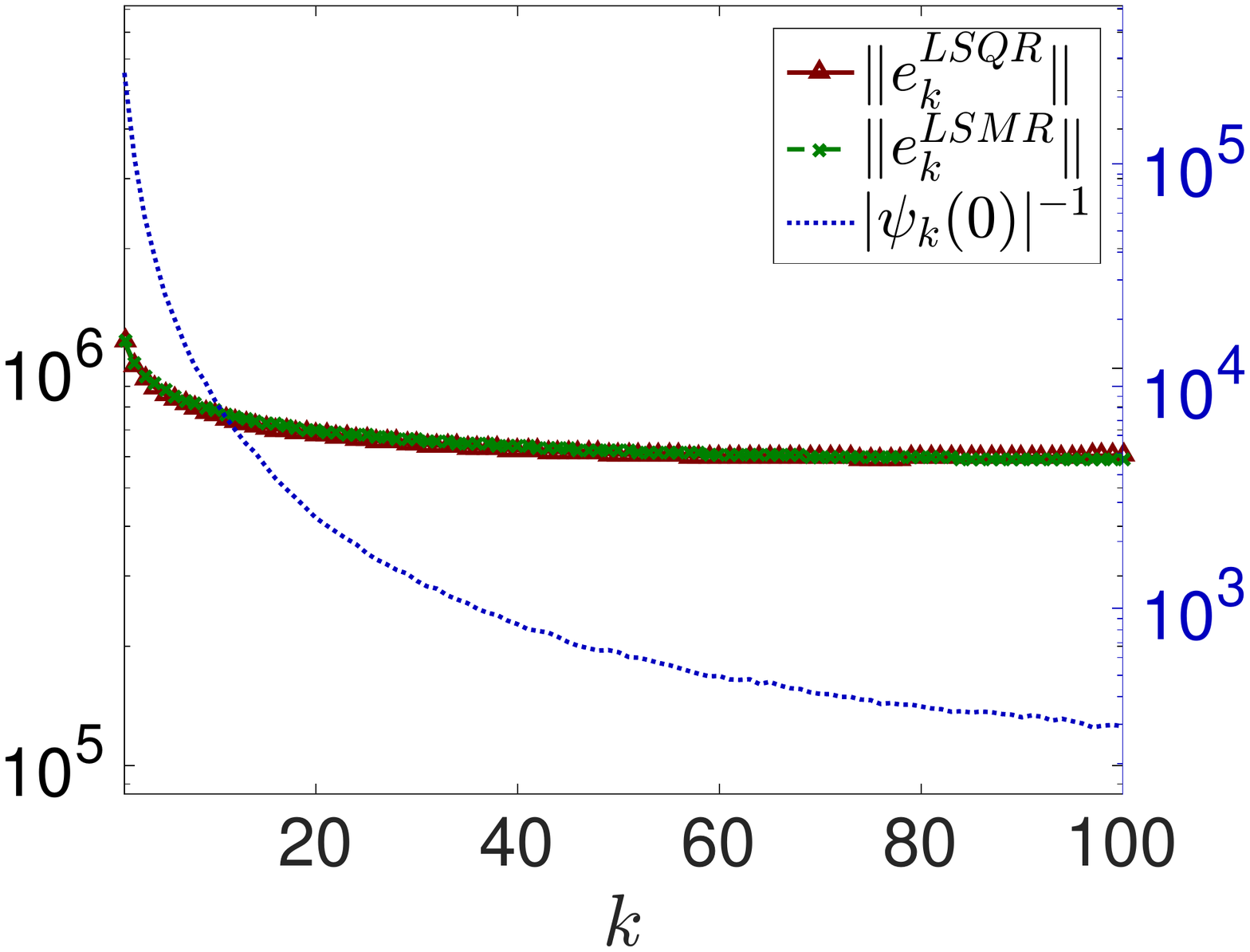}
                \caption{\texttt{vargaussianblur}}
        \end{subfigure}
        \begin{subfigure}[b]{0.32\textwidth}
                \includegraphics[width = .95\textwidth,trim={1.4cm 5cm .4cm 7cm},clip]{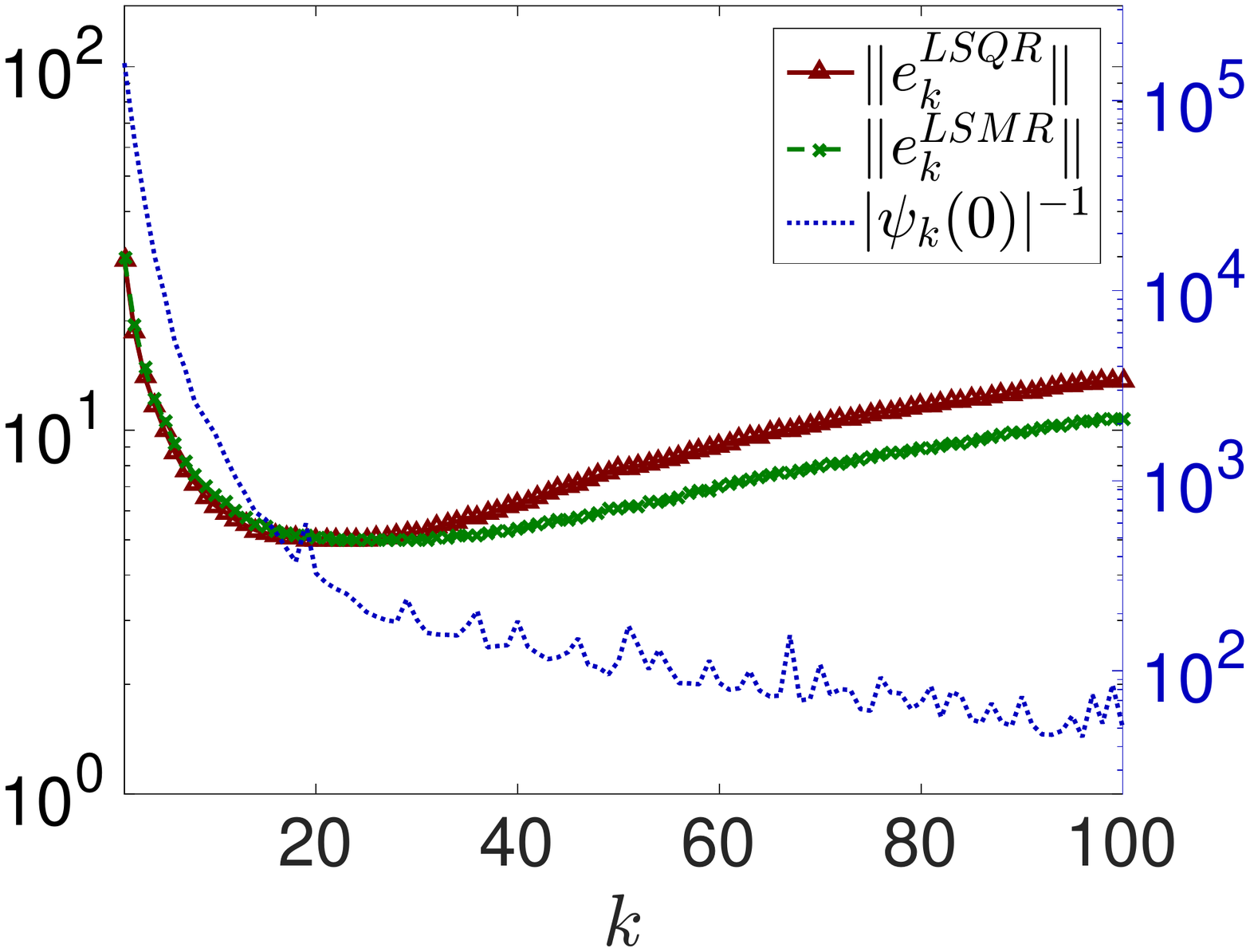}
                \caption{\texttt{seismictomo}}
        \end{subfigure}
        \begin{subfigure}[b]{0.32\textwidth}
                \includegraphics[width = .95\textwidth,trim={1.4cm 5cm .4cm 7cm},clip]{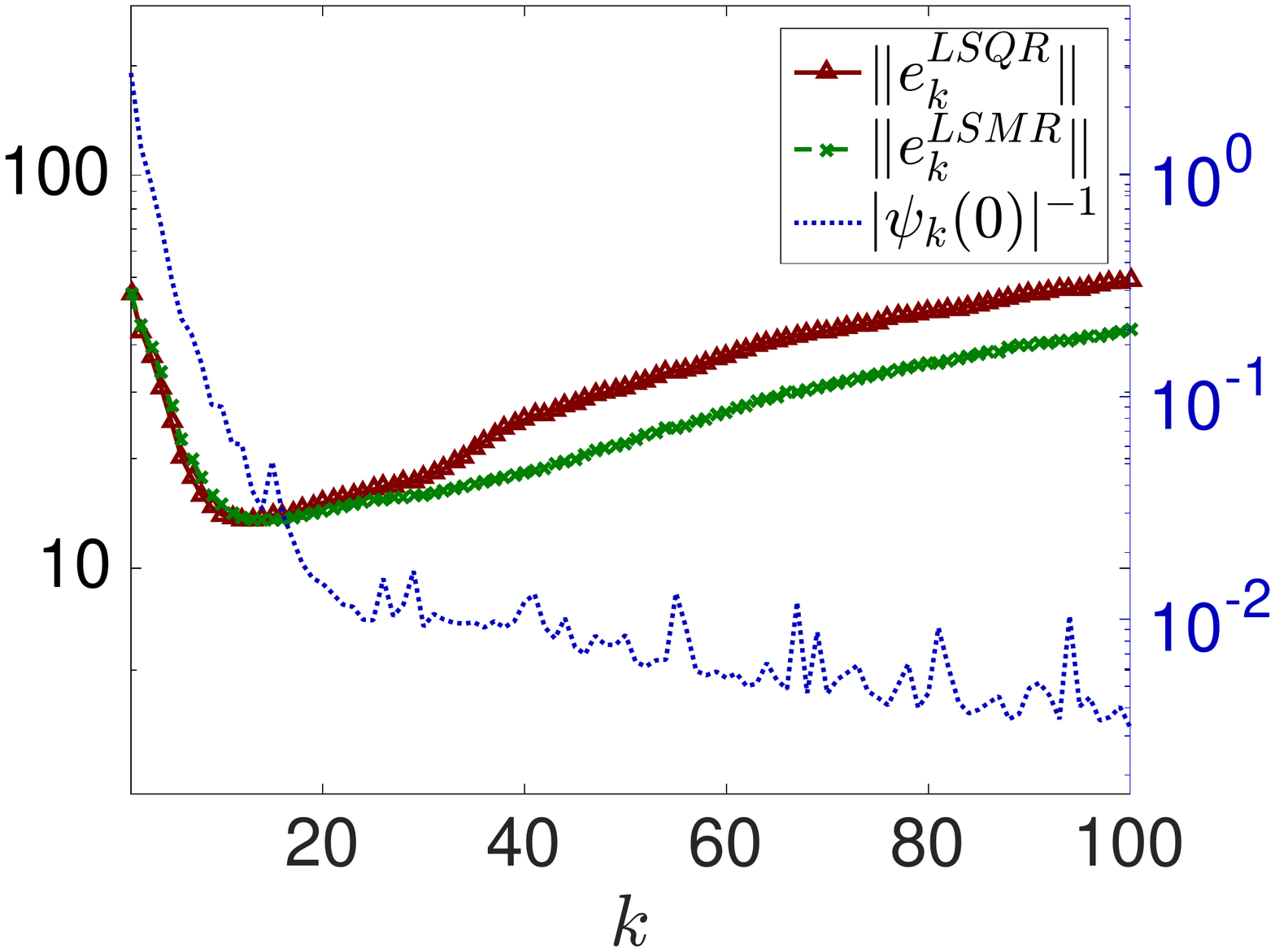}
                \caption{\texttt{paralleltomo}}
        \end{subfigure}
        
        \caption{First row: The size of the residual and the size of the error in CRAIG. Vertical line illustrates the minimum. 
        Second row: The size of the error of CRAIG and LSQR, together with the rescaled inverse of the amplification factor $\varphi_k(0)$
        (vertical scale on the right). Third row: The size of the error of LSQR and CRAIG, together with the rescaled inverse of the factor $\psi_k(0)$ (vertical scale on the right). Computed without reorthogonalization.}\label{fig:2D_errors}
\end{figure}

\section{Conclusion}\label{sec:conclusions} 
We proved that approximating the solution of an inverse problem by the $k$th iterate of CRAIG is mathematically equivalent to solving consistent linear algebraic problem with the same matrix and a right-hand side, where a particular (typically high-frequency) part of noise is removed. Using the analysis of noise propagation, we showed that the size of the CRAIG residual is given by the inverted noise amplification factor, which explains why optimal regularization properties are often obtained when the minimal residual is reached. For LSQR and LSMR, the residual is a linear combination of the left bidiagonalization vectors. The representation of these vectors in the residuals is determined by the amplification factor, in particular, left bidiagonalization vectors with larger amount of propagated noise are on average represented with a larger coefficient in both methods. These results were used in 1D problems to compare the methods in terms of matching between the residuals and the unknown noise vector. For large 2D (or 3D) problems 
the direct comparison of the vectors may not be possible, since noise reveals itself in a few subsequent bidiagonalization vectors (noise revealing phase of bidiagonalization) instead of in one particular iteration. However, the conclusions on the methods themselves remain generally valid.
Presented results contribute to understanding of the behavior of the methods when solving noise-contaminated inverse problems.

\section*{Acknowledgment}
Research supported in part by the Grant Agency of the Czech Republic under the grant 17-04150J.
Work of the first and the second author supported in part by Charles University, project GAUK~196216.
The authors are grateful to the anonymous referee for useful suggestions and comments that improved the presentation of the paper.

\section*{References}

\end{document}